\renewcommand{\phi}{\varphi}
\newcommand{\PP}{\mathbb{P}}
\newcommand{\E}{\mathbb{E}}
\newcommand{\rmd}{{\rm d}}
\newtheorem{theorem}{Theorem}[section]
\newtheorem{remark}[theorem]{Remark}
\newtheorem{lemma}[theorem]{Lemma}
\newtheorem{definition}[theorem]{Definition}
\newtheorem{example}[theorem]{Example}
\newtheorem{proposition}[theorem]{Proposition}
\newtheorem{assumption}[theorem]{Assumption}
\newtheorem{question}[theorem]{Question}
\renewcommand{\tilde}{\widetilde}
\definecolor{forestgreen}{rgb}{0.13, 0.55, 0.13}
\definecolor{orange-red}{rgb}{1.0, 0.27, 0.0}
\definecolor{purp}{rgb}{0.67, 0.17, 0.30}
\definecolor{amethyst}{rgb}{0.6, 0.4, 0.8}
\definecolor{cobalt}{rgb}{0.0, 0.28, 0.67}
\begin{document}
\title[]{Local stability and rates of convergence to equilibrium for the Nonlinear Renewal Equation; applications to Hawkes processes}

\author{C\'eline Duval}
\address{Sorbonne Universit\'e and Universit\'e Paris Cit\'e, CNRS, Laboratoire de Probabilit\'es, Statistique et Mod\'elisation, F-75005 Paris, France \url{celine.duval@sorbonne-universite.fr}}

\author{Eric Lu\c{c}on}
\address{Universit\'e d’Orl\'eans, Universit\'e de Tours, CNRS, IDP, UMR 7013, Orl\'eans, France, \url{eric.lucon@univ-orleans.fr}.
}

\begin{abstract}   We study the asymptotic properties of the solutions of a  nonlinear renewal equation. The main contribution of the present article  is to provide  stability and convergence results around equilibrium solutions, under some local subcritical condition. Quantitative rates of convergence to equilibrium  are established. Instability results are given in both the critical and supercritical cases. As an implication of these results, we establish a Central Limit Theorem for Hawkes processes in a mean-field interaction.  \end{abstract}

\maketitle

\noindent {\sc {\bf Keywords.}} {\small  Nonlinear Renewal Equations, Volterra Equations, Stability, Convergence to equilibrium, Mean-field models, Hawkes processes, Central Limit Theorem} \\
\noindent {\sc {\bf AMS Classification.}}  45G10, 45M05, 45M10, 60G55, 60H30, 37N25, 92C20.

\section{Introduction and Motivations}

\subsection{ Nonlinear Renewal Equation and application to mean-field Hawkes processes}
\label{sec:NRE_intro}
We are concerned in this paper with the asymptotic properties of the \emph{Nonlinear Renewal Equation} (that we will sometimes denote as NRE in the rest of the paper)
\begin{equation}
\label{eq:conv_gen_lambda}
\lambda_{ t} = \Phi \left( \xi_{ t} + \int_{ 0}^{t} h(t-s) \lambda_{ s} {\rm d}s\right), \ t\geq 0,
\end{equation}
whose unknown is the nonnegative function $(\lambda_t)_{t\geq 0}$ and where the locally integrable function $h$, the nonnegative $\Phi$ and the locally integrable function $\xi$, vanishing at infinity, are given. Renewal Equations such as \eqref{eq:conv_gen_lambda} have met a considerable interest in the literature with various applications to \textit{e.g.,} renewal theory \cite{feller1941}, branching processes \cite{Athreya1972,Chover1968,Chover1973,Ney1977}, demography and spread of diseases \cite{Brauer1975}, actuarial risk theory, see \cite{Dermitzakis2022} and references therein. \medskip

The main motivation of the present work concerns the links between \eqref{eq:conv_gen_lambda} and interacting Hawkes processes. 
Since their introduction in \cite{MR0378093} (originally motivated by the modeling of earthquakes activity), Hawkes processes have been successfully applied in various contexts, such as mathematical finance (see \textit{e.g.,} \cite{MR3054533, MR3313750}), ecology \cite{bonnet2025}, genomics \cite{MR2722456} and neuroscience \cite{MR3449317,ReynaudBouret2013InferenceOF}, the latter application being the main motivation of the present work.

The use of Hawkes processes is particularly well suited to the modeling of biological neurons, as they adapt to the history of the whole network. Biological neurons form a large interactive network in which each neuron receives and transmits information to other neurons in the form of electrical signals. These signals are characterized by their similar amplitudes and very short durations, which can be summarized as a single spiking instant.  Thus, a common  way to model these exchanges of information is to associate to each neuron $i\in\{1,\ldots,N\}$ within a population of size $N\geq 1$ the counting process $Z^{i,N}$ of its spikes. The jump dynamics of $Z^{i,N}$ is defined through its intensity function denoted by $\lambda_{t}^{N,i}$. This intensity allows to characterize  the probability that, at each instant $t$, the process will jump: informally
 $\PP(Z^{i,N} \mbox{ has a jump in }[t,t+\rmd t]|\mathcal F_t)\approx \lambda^{i,N}_t\rmd t$, where $\mathcal F_t=\sigma\left(Z^{j,N}_s,\ \forall s\le t,\ \forall j\in\{1,\ldots,N\}\right)$ denotes the history of all the processes in the network. \ We follow here the framework of \cite{MR3449317}, defining an homogeneous mean-field intensity for all particles:
 \begin{equation}
\label{eq:Hawkes_N}
\lambda_t^{N, i}=\lambda_{N, t} := \Phi \left(\xi_{N, t} + \frac{1}{N}\sum_{j=1}^N \int_0^{t-}h(t-s) {\rm d}Z_s^{j, N}\right),\ i=1\ldots, N.
\end{equation}
 All the quantities involved in \eqref{eq:conv_gen_lambda} and \eqref{eq:Hawkes_N} have a biological interpretation:    $\Phi: \mathbb{R}\to [0, +\infty)$  is the \emph{synaptic integration kernel} or \emph{firing rate function}, $h$ is the \emph{memory kernel} that modulates how the past jumps of \eqref{eq:Hawkes_N} affects the present intensity and $\xi_{N,t}$ is a \emph{source term} modeling the possible influence of the history of the process on $(-\infty, 0)$ onto the present. We assume that $\xi_{N,t}\xrightarrow[N\to\infty]{}\xi_t$ for all $t\ge0$.  A usual choice in the literature corresponds to the empty source term $\xi_{N, t}\equiv 0$ which leads to 
 \begin{equation}
 \label{eq:empty_xi}
     \xi^{\emptyset}_t:=0, \ t\geq 0.
 \end{equation}
 Whereas some attention will be granted to the case \eqref{eq:empty_xi} in the following, the point of our work will be precisely to consider source terms $\xi$ as general as possible.
 One of the recent challenges is to take into account the \emph{inhibitory or excitatory} nature of neurons. From a modeling perspective, if the function $\Phi$ is increasing, excitation can be modeled by positive values of the function $h$, while inhibition is modeled by negative values. Considering inhibition generically poses complex mathematical issues, in particular in the identification of the longtime dynamics of such processes (see \textit{e.g.,} \cite{costa_graham_marsalle_tran_2020,duval2021interacting}). In the rest of the article, we will sometimes refer to the \emph{excitatory case} when $h$ is assumed to be non negative.

 The link between the particle system \eqref{eq:Hawkes_N} and the Nonlinear Renewal Equation \eqref{eq:conv_gen_lambda} comes from a standard large population limit. Since the biological network is very dense, the mean field approximation $N\to\infty$ is valid \cite{MR3449317}:  under natural assumptions on the coefficients, the system \eqref{eq:Hawkes_N} is well-described in the large population limit $N\to \infty$ by some inhomogeneous Poisson process $\bar Z_t$ whose intensity $\lambda_t$ solves \eqref{eq:conv_gen_lambda} (see Appendix \ref{sec:prf_TCL} and \eqref{eq:coupling} for a precise statement).

\subsection{Sub- and supercriticality for the NRE}
\label{sec:criticality_intro}
It is easy to see (we refer to Proposition~\ref{prop:conv_X_lambda} below for a more precise statement) that if $\Phi$ is continuous, $h$ is integrable and $\xi_t\xrightarrow[t\to\infty]{}0$, any possible limit $\ell$ of $\lambda_t$ as $t\to\infty$ solves necessarily the following fixed-point equation
\begin{equation}
\label{eq:fixed_point_intro}
    \ell= \Phi \left(\ell\int_0^{+\infty} h(u) {\rm d}u\right).
\end{equation}
The identification of the limit and the stability of such fixed-point for the dynamics induced by \eqref{eq:conv_gen_lambda} will be one of the main purpose of the paper. Whenever $h$ is integrable on $[0, +\infty)$, we will use the notation $\Vert h\Vert_1= \int_0^{+\infty} \vert h(u)\vert {\rm d}u$.
\subsubsection{The linear case}
\label{sec:linear_case}
The historical version of \eqref{eq:conv_gen_lambda} corresponds to the case where $\Phi(x)=\mu + x$ is linear, $\mu> 0$, $ h\geq 0$ and $\xi=\xi^{\emptyset}$ given by \eqref{eq:empty_xi},  \cite{feller1941,MR3449317}:
\begin{equation}
\label{eq:LRE}
\lambda_{ t} = \mu + \int_{ 0}^{t} h(t-s) \lambda_{ s} {\rm d}s, \ t\geq 0,
\end{equation}
In this case, the behavior of $\lambda_t$ as $t\to\infty$ is well-understood:
\begin{theorem}[\cite{feller1941} and \cite{MR3449317}, Th.~10 \& Th.~11]
\label{th:linear_case}
Suppose the above hypothesis concerning $\Phi$ and that $h\geq 0$ is integrable on $[0, +\infty)$. Then we have the following dichotomy concerning the solution $\lambda$ to \eqref{eq:LRE}:
\begin{enumerate}[label=(\alph*)]
    \item Subcritical case: suppose that $ \Vert h \Vert_1<1$, then $\lambda_t \xrightarrow[t\to\infty]{} \frac{\mu}{1- \Vert h \Vert_1}$.
    \item Supercritical case: suppose that $\Vert h \Vert_1\geq 1$, then $\lambda_t \xrightarrow[t\to\infty]{} +\infty$. 
\end{enumerate}
\end{theorem}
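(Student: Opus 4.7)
The plan is to recognize \eqref{eq:LRE} as a linear convolution Volterra equation and to exploit the explicit representation of its solution via the resolvent kernel (equivalently, via iterated convolution powers of $h$). Existence and uniqueness of a locally integrable solution $\lambda$ is a routine application of Banach's fixed-point theorem on successive compact intervals, so I would take this for granted and focus on the longtime behavior.

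For the subcritical case (a), I would first define the resolvent $R := \sum_{n \geq 1} h^{*n}$. Since $h \geq 0$, Fubini (or Young's convolution inequality, which is sharp in this setting) gives $\Vert h^{*n}\Vert_{1} = \Vert h \Vert_1^{n}$, so under $\Vert h\Vert_1 < 1$ the series converges in $L^1(\R_+)$ with $\Vert R \Vert_1 = \Vert h\Vert_1/(1-\Vert h\Vert_1)$, and $R$ satisfies the resolvent identity $R = h + h * R$. One then checks directly that $\mu + \mu\int_0^{\cdot} R(s)\,\mathrm{d}s$ solves \eqref{eq:LRE} and invokes uniqueness to conclude that
\begin{equation*}
\lambda_t = \mu + \mu \int_0^t R(s)\,\mathrm{d}s.
\end{equation*}
Because $R \geq 0$ is integrable, monotone convergence gives $\int_0^t R(s)\,\mathrm{d}s \to \Vert R\Vert_1$, whence $\lambda_t \to \mu(1+\Vert R\Vert_1) = \mu/(1-\Vert h\Vert_1)$.

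For the supercritical case (b) the resolvent series is no longer summable in $L^1$, so I would iterate \eqref{eq:LRE} directly to get, by an immediate induction on $n$,
\begin{equation*}
\lambda_t = \mu \sum_{k=0}^{n} H_k(t) \;+\; \bigl(h^{*(n+1)}*\lambda\bigr)(t), \qquad H_k(t) := \int_0^t h^{*k}(s)\,\mathrm{d}s,
\end{equation*}
with the convention $H_0\equiv 1$. Both terms being nonnegative, $\lambda_t \geq \mu\sum_{k=0}^n H_k(t)$ for every $n$ and $t$. Monotone convergence yields $H_k(t)\to \Vert h\Vert_1^{k}$ as $t\to\infty$, so $\liminf_{t\to\infty}\lambda_t \geq \mu\sum_{k=0}^n \Vert h\Vert_1^{k}$ for each fixed $n$. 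Letting $n\to\infty$ and using $\Vert h\Vert_1\geq 1$, the right-hand side diverges, giving $\lambda_t\to+\infty$.

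The main subtlety is the boundary (critical) case $\Vert h\Vert_1 = 1$: the $L^1$-resolvent machinery of (a) collapses ($\Vert R\Vert_1 = +\infty$), but the iterative lower bound of (b) handles $\Vert h\Vert_1 \geq 1$ uniformly, because $\sum_{k=0}^{n}1 = n+1 \to\infty$. Apart from this, the only mildly technical point is the induction step in (b), which relies on the associativity identity $h*(h^{*k}*\lambda) = h^{*(k+1)}*\lambda$ combined with the substitution $\lambda = \mu + h*\lambda$ inside the remainder $(h^{*(n+1)}*\lambda)(t)$.
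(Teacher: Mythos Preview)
The paper does not supply its own proof of this theorem; it is quoted as a classical result with references to Feller (1941) and Delattre--Fournier--Hoffmann (2016). Your argument is correct and is precisely the classical resolvent approach found in those references: the $L^1$-summable resolvent series in the subcritical regime, and the iterated lower bound $\lambda_t \geq \mu\sum_{k=0}^n H_k(t)$ in the supercritical (and critical) regime.
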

Hence, the dichotomy in Theorem~\ref{th:linear_case} is between a bounded and convergent solution in the first case and an unbounded divergent solution in the second. Explicit rates of divergence can be proven in the supercritical case, see \cite{MR3449317}, Th.~11 and \cite{Horst2024}. The linear Renewal Equation has been the subject of an extensive interest in the literature that cannot be covered exhaustively here (see \textit{e.g.,} \cite{Athreya1976,Dermitzakis2022,Chover1973} and references therein). In opposition to the fully nonlinear case, what makes the analysis considerably simpler in the linear case \eqref{eq:LRE} is that its solution can be expressed explicitly as the convolution w.r.t. the resolvent kernel associated to \eqref{eq:LRE} (see \cite{feller1941,Dermitzakis2022,Horst2024}).

\medskip

Going back the general equation \eqref{eq:conv_gen_lambda}, the main question and motivation for the present work is then:
\begin{question}
\label{qu:criticality}
When $ \Phi$ is nonlinear, what is the correct notion of criticality for Equation~\eqref{eq:conv_gen_lambda}? does it refer to the convergence of $\lambda_t$ as $t\to\infty$? or more simply to the boundedness of $\lambda$ on $[0, +\infty)$? can we identify a phase transition similar to the linear case of Theorem~\ref{th:linear_case}? if so, how does it involve $ h$ and the nonlinear kernel $\Phi$? 
\end{question}
\subsubsection{Strong subcriticality in the nonlinear case}
First begin with a very demanding statement: an assumption that is usually adopted in the Hawkes literature \cite{agathenerine22,MR1411506,Duarte:2016aa,DITLEVSEN20171840,MR3102513} is the following strong subcritical condition
\begin{equation}
\label{eq:subcritical_Lip}
    \vert \Phi \vert_{Lip} \Vert h \Vert_1<1.
\end{equation}
In this case, the behavior of $\lambda$ is described by the following straightforward result 

\begin{proposition}
\label{prop:subcritical_Lip}
Suppose that $ \Phi$ is Lipschitz continuous and $h$ integrable on $[0, +\infty)$. Suppose that \eqref{eq:subcritical_Lip} is true.
Then, there is a unique fixed-point $\ell$ solution to \eqref{eq:fixed_point_intro} and for every bounded source term $ \xi$ such that $ \xi_{ t} \xrightarrow[ t\to\infty]{}0$, the solution $ \lambda= \lambda^{ \xi}$ to \eqref{eq:conv_gen_lambda} remains bounded and converges to $\ell$ as $t\to\infty$.
\end{proposition}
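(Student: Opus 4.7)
The plan is to argue in two stages: first, produce the fixed-point $\ell$ via a contraction argument; then compare $\lambda_t$ to $\ell$ through a Volterra-type integral inequality, from which uniform boundedness of $\lambda$ and the convergence $\lambda_t \to \ell$ both follow.

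For the first stage, let $m := \int_0^{+\infty} h(u) \rmd u$ (note $|m| \le \|h\|_1$) and set $G(x) := \Phi(m x)$ on $\R$. Then $G$ is Lipschitz with constant at most $|\Phi|_{Lip} \|h\|_1 < 1$, hence a strict contraction on the complete space $\R$, and the Banach fixed-point theorem yields a unique $\ell$ satisfying \eqref{eq:fixed_point_intro}.

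For the second stage, I would use the identity $\ell = \Phi(\ell m)$ and subtract from \eqref{eq:conv_gen_lambda}, so that by the Lipschitz property of $\Phi$ one obtains, for every $t \ge 0$,
\begin{equation*}
v_t \;\le\; \varepsilon_t \;+\; |\Phi|_{Lip} \int_0^t |h(t-s)|\, v_s\, \rmd s,
\qquad
v_t := |\lambda_t - \ell|,\quad
\varepsilon_t := |\Phi|_{Lip}\Big(|\xi_t| + \ell\Big|\int_t^{+\infty} h(u)\, \rmd u\Big|\Big).
\end{equation*}
Under the hypotheses on $\xi$ and $h$, the function $\varepsilon$ is bounded and $\varepsilon_t \to 0$. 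Setting $V_T := \sup_{t \le T} v_t$ (finite by the standard Picard argument for \eqref{eq:conv_gen_lambda} under the Lipschitz condition), the inequality gives $V_T \le \|\varepsilon\|_\infty + |\Phi|_{Lip}\|h\|_1 V_T$, hence $V_T \le \|\varepsilon\|_\infty/(1-|\Phi|_{Lip}\|h\|_1)$ uniformly in $T$, proving the boundedness claim with $V := \sup_{t\ge 0} v_t < \infty$.

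The only real work is the convergence $v_t \to 0$: set $v^\star := \limsup_{t\to\infty} v_t$, fix $T>0$, and split the convolution as $\int_0^t |h(t-s)| v_s \rmd s = \int_0^T |h(u)| v_{t-u}\rmd u + \int_T^t |h(u)| v_{t-u}\rmd u$. The tail piece is bounded by $V \int_T^{+\infty}|h(u)| \rmd u$, while reverse Fatou (applied with the integrable majorant $V|h|\mathds{1}_{[0,T]}$) yields $\limsup_{t\to\infty}\int_0^T |h(u)| v_{t-u}\rmd u \le v^\star \int_0^T |h(u)|\rmd u \le v^\star \|h\|_1$. Combining with $\varepsilon_t \to 0$ and then letting $T \to \infty$ gives $v^\star \le |\Phi|_{Lip}\|h\|_1 v^\star$, which forces $v^\star = 0$ thanks to \eqref{eq:subcritical_Lip}. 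The main obstacle is precisely this last step: one must already know a priori boundedness of $v$ before the limsup-versus-integral exchange can be justified, which is why the boundedness argument has to come first.
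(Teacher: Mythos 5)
Your proof is correct, and while the fixed-point and boundedness steps coincide with the paper's (the paper also gets uniqueness of $\ell$ from the contraction property, and proves boundedness by exactly your sup-plus-convolution estimate, applied to $\lambda$ itself rather than to $|\lambda-\ell|$), your convergence step takes a genuinely different route. The paper never compares $\lambda_t$ to $\ell$ directly: it studies the increments $\Delta(s,h)=|\lambda_{s+h}-\lambda_s|$, splits the convolution, obtains $\limsup_{s\to\infty}\Delta(s,h)\le |\Phi|_{Lip}\Vert h\Vert_1\limsup_{u\to\infty}\Delta(u,h)$, concludes that the increments vanish, and then deduces convergence of $\lambda_t$ with the limit identified as the unique fixed point through the mechanism of Proposition~\ref{prop:conv_X_lambda}. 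Your route --- setting $v_t=|\lambda_t-\ell|$, absorbing the vanishing error $\varepsilon_t=|\Phi|_{Lip}\bigl(|\xi_t|+\ell\bigl|\int_t^{+\infty}h(u)\,{\rm d}u\bigr|\bigr)$, splitting the convolution at a fixed $T$, and closing the inequality $v^\star\le|\Phi|_{Lip}\Vert h\Vert_1 v^\star$ --- is more direct: it delivers convergence \emph{to $\ell$} in one pass, with no separate identification-of-the-limit step, and it avoids having to argue that a bounded function whose increments $\lambda_{s+h}-\lambda_s$ vanish for each fixed $h$ actually converges (a passage the paper states rather quickly). It is also essentially the non-quantitative, globally Lipschitz version of the mechanism the paper deploys later in Proposition~\ref{prop:conv_rate} and Theorem~\ref{th:conv_rate} (comparison to $\ell$ plus a split of the convolution using the tail $\int_T^{+\infty}|h|$), so the two approaches fit together naturally. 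Two minor remarks: your reverse-Fatou step can be replaced by the even more elementary bound $\int_0^T|h(u)|v_{t-u}\,{\rm d}u\le\bigl(\sup_{s\ge t-T}v_s\bigr)\Vert h\Vert_1$ followed by $t\to\infty$, since $\sup_{s\ge R}v_s\downarrow v^\star$; and writing $\varepsilon_t$ with the factor $\ell$ rather than $|\ell|$ is legitimate because $\Phi\ge0$ forces $\ell\ge0$, a point worth saying explicitly.
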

There is no reason why Proposition~\ref{prop:subcritical_Lip} should be really new, but we have not found any complete statement in the literature. We have reproduced its proof in Section~\ref{sec:strong_subcrit} for completeness. However Condition~\eqref{eq:subcritical_Lip} is obviously too demanding for many interesting examples, as it prevents \textit{e.g.,} to consider situations where there are multiple solutions to \eqref{eq:fixed_point_intro}. A prominent example is when $\Phi$ is sigmoid with three fixed-points solution to \eqref{eq:fixed_point_intro} (see Figure~\ref{fig:sigmoid} and \textit{e.g.,} \cite{AgatheNerine2025,Heesen2021,lucon:hal-05185413,Sulem2024}).

\subsection{Boundedness and convergence of solutions to the NRE}
\label{sec:boundedness_intro}
\subsubsection{Global estimates for boundedness}
Let us address what is known in the literature concerning the boundedness of solutions to \eqref{eq:conv_gen_lambda}. The strong statement \eqref{eq:subcritical_Lip} may be replaced by the following weaker one, that is due to \cite{Brauer1975}:
\begin{equation}
\label{eq:global_subcrit}
\limsup_{ \left\vert x \right\vert\to\infty} \frac{ \Phi(x)}{ \left\vert x \right\vert} \left\Vert h \right\Vert_{ 1} <1
\end{equation}
\begin{proposition}[\cite{Brauer1975}]
\label{prop:global_subcrit}
Suppose that $ \Phi$ is locally Lipschitz, that $h$ is integrable on $[0, +\infty)$ and $ \xi$ bounded on $[0, +\infty)$. Suppose that \eqref{eq:global_subcrit} is true. Then $ \lambda= \lambda^{ \xi}$ solution to \eqref{eq:conv_gen_lambda} with source term $\xi$  is bounded on $[0,+\infty)$.
\end{proposition}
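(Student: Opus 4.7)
The plan is to establish an a priori bound on $\lambda^{\xi}$ by a standard sup–Grönwall trick that exploits the fact that the growth rate of $\Phi$ at infinity, rescaled by $\|h\|_1$, is strictly less than one. The first observation I would make is that \eqref{eq:global_subcrit} together with local Lipschitzness (hence continuity) of $\Phi$ yields an affine majorant: there exist $\alpha\in(0,1)$ and a constant $K\ge 0$ such that for every $x\in\R$,
\begin{equation*}
    0\le \Phi(x)\le K+\frac{\alpha}{\|h\|_1}\,|x|.
\end{equation*}
Indeed, pick $\alpha$ strictly between $\limsup_{|x|\to\infty}\Phi(x)/|x|\cdot \|h\|_1$ and $1$; the definition of the limsup gives $M>0$ for which $\Phi(x)\le \alpha|x|/\|h\|_1$ when $|x|>M$, and the bound on $[-M,M]$ is absorbed in $K$ by continuity. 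Nonnegativity of $\Phi$ is part of the standing hypotheses on the NRE.

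Next I would introduce $u(t):=\sup_{0\le s\le t}\lambda_s$. Local Lipschitzness of $\Phi$ and local integrability of $h$ guarantee, by standard Volterra-equation theory, a unique continuous (hence locally bounded) solution $\lambda^\xi$ on a maximal interval $[0,T^\star)$. On this interval one has $u(t)<\infty$ for every $t<T^\star$. Plugging the affine majorant into \eqref{eq:conv_gen_lambda} and using the crude bound $\big|\int_0^t h(t-s)\lambda_s\,\rmd s\big|\le \int_0^t |h(t-s)|\lambda_s\,\rmd s \le u(t)\|h\|_1$ (where $\lambda_s\ge 0$ is crucial), I get for every $t'\le t<T^\star$
\begin{equation*}
    \lambda_{t'}\le K+\frac{\alpha}{\|h\|_1}\|\xi\|_\infty+\alpha\, u(t').
\end{equation*}
Taking the supremum over $t'\in[0,t]$ on both sides and using $u$ is nondecreasing gives
\begin{equation*}
    u(t)\,(1-\alpha)\le K+\frac{\alpha}{\|h\|_1}\|\xi\|_\infty,
\end{equation*}
hence the uniform bound $u(t)\le (K+\alpha\|\xi\|_\infty/\|h\|_1)/(1-\alpha)$, independent of $t<T^\star$.

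Finally, I would close the argument by a continuation argument: the bound on $u$ does not depend on $T^\star$, so $\lambda$ cannot blow up in finite time, and the maximal solution therefore extends to $[0,+\infty)$ while remaining bounded by the same constant. The main (minor) subtlety of the proof, and the step I would be most careful with, is the local existence/continuation part: one needs that the Volterra integral $\int_0^t h(t-s)\lambda_s\,\rmd s$ stays well-defined and that continuation of local solutions is controlled purely by an a priori $L^\infty$ bound on $\lambda$. Since $\Phi$ is only locally Lipschitz, blow-up in finite time is the only obstruction to global existence, and our estimate precisely rules it out. The rest of the proof is essentially a one-line application of the sup trick, with the affine majorant playing the role usually played by the Lipschitz constant in \eqref{eq:subcritical_Lip}.
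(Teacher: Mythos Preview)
Your proof is correct and follows essentially the same sup-trick as the paper's own argument (Section~\ref{sec:global_subcrit}). The only presentational differences are that the paper works with the potential $x_t=\xi_t+\int_0^t h(t-s)\lambda_s\,\rmd s$ rather than $\lambda_t$ directly, and splits the time domain into $\{|x_u|>K\}$ and $\{|x_u|\le K\}$ instead of first deriving your global affine majorant $\Phi(x)\le K+\alpha|x|/\|h\|_1$; these are equivalent repackagings of the same estimate. Your explicit continuation argument for the locally-Lipschitz case is a useful addition that the paper does not spell out.
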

The proof of Proposition~\ref{prop:global_subcrit} is due to Brauer \cite{Brauer1975}, it is stated in \cite{Brauer1975} under peculiar assumptions (notably that $ \Phi(0)=0$ and $h$ nonincreasing). We reproduce in Section~\ref{sec:global_subcrit} its proof with minor adaptations to convince the reader that everything works under our more general assumptions. We refer to \cite{Gripenberg1979} for further results related to boundedness of solutions to \eqref{eq:conv_gen_lambda}. To the best of our knowledge, nothing has been proven concerning the optimality of Condition~\eqref{eq:global_subcrit} w.r.t. the boundedness of solutions to \eqref{eq:conv_gen_lambda}. One contribution of the paper will be to prove that \eqref{eq:global_subcrit} is indeed optimal, in the sense that when $ \limsup_{ \left\vert x \right\vert\to\infty} \frac{ \Phi(x)}{ \left\vert x \right\vert} \left\Vert h \right\Vert_{ 1} \geq 1$, there are generically  unbounded solutions to \eqref{eq:conv_gen_lambda}. Interestingly, Condition~\eqref{eq:global_subcrit} appears also in \cite{Robert2024} as a sufficient condition for the existence of stationary univariate Hawkes processes. We refer to Section~\ref{sec:optimal_boundedness} for precise statements.

\subsubsection{Londen's Theorem}
Considering \eqref{eq:conv_gen_lambda} from a neuroscience perspective, if $\lambda_t$ stands for the intensity of a generic neuron within an infinite population, what is commonly considered as the membrane potential of the neuron \cite{AgatheNerine2022,CHEVALLIER20191,lucon:hal-05185413} is the quantity $x_t$ defined as 
\begin{equation*}
   x_{ t}:= \xi_{ t} + \int_{ 0}^{t} h(t-s) \lambda_s {\rm d}s,\ t\geq 0,
\end{equation*}
so that 
\begin{equation*}
\lambda_{ t}= \Phi \left(x_{ t}\right),\ t\geq 0.
\end{equation*}
In this setting, the NRE~\eqref{eq:conv_gen_lambda} for the intensity $\lambda$ is equivalent to the NRE for the potential $x_t$ that is written as
\begin{equation}
\label{eq:conv_gen_X}
x_{ t}= \xi_{ t} + \int_{ 0}^{t} h(t-s) \Phi(x_{ s}) {\rm d}s,\ t\geq 0.
\end{equation}
An important result is due to Londen \cite{LONDEN1973106,doi:10.1137/0505082}, see also \cite[Th.1 and Corollary]{BRAUER197632}, that is written in terms of \eqref{eq:conv_gen_X}:

\begin{theorem}[\cite{doi:10.1137/0505082}] 
\label{th:londen}
Suppose that $h$ is nonnegative, nonincreasing and integrable on $[0, +\infty)$ with $h(0)<+\infty$. Suppose that $ \Phi$ is continuously differentiable on $ \mathbb{ R}$ and that there is no interval on which $ \Phi^{ \prime}(x) \left\Vert h \right\Vert_{ 1} \equiv 1$. Suppose finally that $\xi$ is bounded on $[0, +\infty)$ with $\lim_{t\to\infty}\xi_{t}=0$. Let $x:= \left(x_{ t}^{ \xi}\right)_{ t\geq0}$ be the solution to \eqref{eq:conv_gen_X} with source term $ \xi$. Then, if $(x_t)$ is bounded, then $ x_t \xrightarrow[t\to\infty]{}\ell_x$ where $\ell_x$ solves the companion fixed-point relation
\begin{equation*}
\ell_{ x}= \left(\int_0^{+\infty}h(u) {\rm d}u\right) \Phi(\ell_{ x}).
\end{equation*}
\end{theorem}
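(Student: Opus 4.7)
The plan is to analyze the $\omega$-limit set $\omega(x):=\bigcap_{T\geq 0}\overline{\{x_t:t\geq T\}}$ of the trajectory $(x_t)_{t\geq 0}$, show it is contained in the fixed-point set $\mathcal E:=\{\ell\in\mathbb R:\ell=\|h\|_1\Phi(\ell)\}$, and deduce it reduces to a single point. I would proceed in three steps: (i) extract subsequential limits of shifted trajectories and identify a \emph{stationary} equation they satisfy on $\mathbb R$; (ii) show that every bounded continuous solution of this stationary equation is constant; (iii) conclude using that $\omega(x)$ is connected while $\mathcal E$ has empty interior under the no-flat-interval hypothesis.

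For step (i), boundedness of $x$ together with continuity of $\Phi$ makes $\phi:=\Phi(x)$ bounded; combined with $h(0)<+\infty$, $h$ nonincreasing and $h\in L^1$, this yields uniform continuity of $x$ on $[0,+\infty)$ by a short computation on $x_t-x_s$ based on the absolute continuity of $u\mapsto\int_0^u h$. For any sequence $t_n\to+\infty$, Arzel\`a--Ascoli then gives a subsequence along which the shifts $\tilde x_n(s):=x(t_n+s)$ converge locally uniformly on $\mathbb R$ to some bounded continuous $\tilde x:\mathbb R\to\mathbb R$. Passing to the limit in the shifted equation via $\xi_t\to 0$ and dominated convergence (using $h\in L^1$ and $\phi$ bounded) yields that $\tilde x$ solves
$$\tilde x(s)=\int_0^{+\infty}h(u)\,\Phi\!\bigl(\tilde x(s-u)\bigr)\,\rmd u,\qquad s\in\mathbb R.$$

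Step (ii) is the core of the proof and its main difficulty. One must show that every bounded continuous solution $\tilde x$ of the stationary equation above is necessarily constant. Here the monotonicity of $h$ is essential: following Londen, the representation $h(u)=\nu([u,+\infty))$ with $\nu=-\rmd h$ a finite positive measure of mass $h(0)$ allows one to rewrite the stationary equation as an average of time-averages of $\Phi(\tilde x)$ and to derive, via a positive-definite/energy-type identity, that $\tilde x$ cannot oscillate. Inserting $\tilde x\equiv\ell$ back into the stationary equation then forces $\ell=\|h\|_1\Phi(\ell)$, so $\ell\in\mathcal E$. I expect the bulk of the analytical work to lie in this step, which is essentially the content of Londen's original result.

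Step (iii) closes the argument in a few lines. The set $\omega(x)$ is nonempty (by boundedness), connected (as a nested intersection of compact connected subsets of $\mathbb R$, each the closure of the continuous image of a connected set) and, by (i)--(ii), contained in $\mathcal E$. If $\mathcal E$ had nonempty interior, then on some open interval the identity $\ell=\|h\|_1\Phi(\ell)$ would hold; differentiating would give $\Phi'(\ell)\|h\|_1\equiv 1$ there, contradicting the hypothesis of the theorem. Hence $\mathcal E$ has empty interior in $\mathbb R$; since the only nonempty connected subsets of $\mathbb R$ with empty interior are singletons, $\omega(x)=\{\ell_x\}$, which is precisely the claim $x_t\to\ell_x$.
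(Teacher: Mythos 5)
The paper does not prove this theorem at all: it is quoted from the literature, with the proof deferred to Londen \cite{doi:10.1137/0505082} and Brauer \cite{BRAUER197632}. So your proposal has to stand on its own, and it does not: the whole analytic content is concentrated in your Step (ii), which you do not carry out but instead describe as ``essentially the content of Londen's original result''. Invoking Londen's theorem to justify the key step of a proof of Londen's theorem is circular; as a blind attempt, the core argument (why monotonicity of $h$ forbids persistent oscillation) is simply missing.

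Worse, Step (ii) as you state it is false, so the skeleton cannot be repaired by just filling in details. Take $h(u)=\alpha e^{-\alpha u}$ (nonnegative, nonincreasing, integrable, $h(0)=\alpha<\infty$) and a sigmoid $\Phi$ with three solutions of $\ell=\|h\|_{1}\Phi(\ell)$, exactly as in Assumption~\ref{ass:sigmoid}. Any bounded entire solution of the ODE $\dot y=-\alpha y+\alpha\Phi(y)$, in particular a heteroclinic orbit joining the unstable fixed point to a stable one, satisfies (by integrating $\frac{\rm d}{{\rm d}s}\left(e^{\alpha s}y(s)\right)=\alpha e^{\alpha s}\Phi(y(s))$ from $-\infty$) the stationary relation $y(s)=\int_{0}^{+\infty}h(u)\Phi\left(y(s-u)\right){\rm d}u$ for all $s\in\mathbb{R}$, yet it is bounded, continuous and non-constant. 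Hence ``every bounded continuous solution of the stationary equation is constant'' fails under the hypotheses of the theorem, and your ω-limit-set argument collapses: to conclude you would need to show that such non-constant entire solutions cannot arise as shift-limits of the given \emph{forward} trajectory, which is precisely the difficulty Londen's and Brauer's proofs address directly (via $\limsup$/$\liminf$ arguments on $x_t$ exploiting the monotone-kernel structure), not via a Liouville property of the limit equation. Steps (i) and (iii) are essentially fine (modulo a small fix in (i): $\xi$ is only bounded with $\xi_t\to0$, not uniformly continuous, so equicontinuity should be argued for the convolution part only, with the shifted $\xi$ contribution vanishing uniformly on compacts), but they are the routine part of the argument.
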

The condition that there is no interval on which $ \Phi^{ \prime}(x) \left\Vert h \right\Vert_{ 1} \equiv 1$ implies that the solutions to \eqref{eq:fixed_point_X} are isolated. From the boundedness of $ \xi$, the integrability of $h$ and the continuity of $ \Phi$, we deduce immediately that the boundedness of $x$ is equivalent to the boundedness of $\lambda$: a similar result where $x$ is replaced by $\lambda$ is also true. We refer to \cite{doi:10.1137/0505082} and \cite[Th.1 and Corollary]{BRAUER197632} for a proof of Theorem~\ref{th:londen}.

\subsubsection{Comments on Londen's Theorem}
A series of papers have addressed sufficient conditions for convergence of $\lambda_t$, most often in cases that do not match our hypotheses (assuming either that $\Phi$ is nonincreasing, hence implying the uniqueness of a solution to \eqref{eq:fixed_point_intro} or that $\Phi(0)=0$) see \textit{e.g.,} \cite{Levin1965,Levin1972,Shilepsky1974}. As deep and important as it may be, Theorem~\ref{th:londen} leaves room for further comments and perspectives.

Firstly, the hypotheses of Theorem~\ref{th:londen} are quite restrictive, especially concerning the kernel $h$: the assumption that $h$ being nonincreasing is not satisfied by many interesting examples, among which Erlang kernels (see Section~\ref{sec:Erlang}) that have been extensively considered in the Hawkes literature, see \textit{e.g.,} \cite{Duarte:2016aa} and references therein.

Secondly, Theorem~\ref{th:londen} requires the a priori knowledge that the solution $\lambda$ is bounded. As we have seen, a sufficient condition for this is \eqref{eq:global_subcrit}. However, the possibility remains that \eqref{eq:conv_gen_lambda} admits bounded solution while \eqref{eq:global_subcrit} does not hold. Since $\lambda^{\xi}$ is uniquely determined by its source term $\xi$, a more natural criterion for boundedness than \eqref{eq:global_subcrit} (that exclusively relies on $ \Phi$) should rely instead on the source term $\xi$ itself. Hence, the question is
\begin{question}
\label{qu:bounded}
For any given $\Phi$, can we characterize the source terms $ \xi$ for which the corresponding solution $\lambda^\xi$ to \eqref{eq:conv_gen_lambda} remains bounded?
\end{question}

Thirdly, once we know that some solution $\lambda$ to \eqref{eq:conv_gen_lambda} is bounded, Theorem~\ref{th:londen} does not say anything on the identification of its limit as $t\to\infty$ in cases where \eqref{eq:fixed_point_intro} admits several solutions. Further questions are then
\begin{question}
\label{qu:convergence}
\begin{enumerate}[label=(\alph*)]
    \item For a given solution $\ell$ to the fixed-point equation \eqref{eq:fixed_point_intro}, can we characterize the source terms $\xi$ such that the corresponding solution $\lambda^\xi$ to \eqref{eq:conv_gen_lambda} converges to $\ell$ as $t\to\infty$?
    \item If so, is it possible to obtain quantitative rates of convergence towards $\ell$?
    \item In case there is only one solution to \eqref{eq:fixed_point_intro}, does this mean that every solution $\lambda$ converge to $\ell$ as $t\to \infty$? or are there solutions that oscillate or diverge to $+\infty$?
\end{enumerate}
\end{question}

\subsection{Contributions of this paper}
\label{sec:contributions_intro}
The purpose of the paper is to address Questions~\ref{qu:criticality}, \ref{qu:bounded} and~\ref{qu:convergence}. 
\subsubsection{Local stability and rates and convergence to equilibrium}
The first contribution of the paper is to define a weaker local notion of subcriticality in replacement of \eqref{eq:subcritical_Lip} that concerns each fixed-point $\ell$ to \eqref{eq:fixed_point_intro}: namely, we say that $\ell$ solution to \eqref{eq:fixed_point_intro} is \emph{locally subcritical} if the following condition holds
\begin{equation}
\label{eq:cond_stab_ell}
\left\Vert h \right\Vert_{ 1}\left\vert \Phi^{ \prime} \left(\ell\int_{0}^{+\infty}h(u) {\rm d}u\right) \right\vert <1.
\end{equation} 
Note that in the linear case (\S~\ref{sec:linear_case}) Conditions~\eqref{eq:subcritical_Lip},~\eqref{eq:global_subcrit}, and~\eqref{eq:cond_stab_ell} coincide. 
The first main result of the paper (Theorem~\ref{th:conv_rate}) states a quantitative local version of Londen's Theorem: under the local subcritical Condition~\eqref{eq:cond_stab_ell}, we prove that once we know that the solution $\lambda$ asymptotically reaches a fixed small neighborhood of $\ell$, $\lambda_t$ converges to $\ell$ as $t\to\infty$. Moreover, we give quantitative rates of convergence depending on the decay of both $\xi$ and $h$ as $t\to\infty$. We prove in the second main result (Theorem~\ref{th:stab_stationary}) that Theorem~\ref{th:conv_rate} is valid for a class of $\xi$ that are small perturbations of equilibrium source term $\xi^{eq, \ell}$ (see Proposition~\ref{prop:equilibrium} and \eqref{eq:xi_stationary} for a precise definition). We prove also in Theorem~\ref{th:conv_empty} that the result of Theorem~\ref{th:conv_rate} holds for perturbations of the empty source term $\xi^\emptyset\equiv 0$  given by \eqref{eq:empty_xi} at least in the excitatory case $h\geq 0$. Condition~\eqref{eq:cond_stab_ell} appears also in \cite{agathenerine22} (see in particular the comments in \S~2.3.5) as a sufficient condition for the long-time stability of mean-field Hawkes processes with spatial interaction. However, \cite{agathenerine22} restricts to the case of exponential kernels $h(u)=e^{-u}$ and does not address rates of convergence as we do.

\subsubsection{Optimality of the local condition in the excitatory case}
Theorems~\ref{th:conv_rate} and~\ref{th:stab_stationary} are valid without any assumption on the sign of $h$: both results are applicable in a context with inhibition. In a situation with pure excitation (\textit{i.e.,} $h\geq 0$) we complement the previous results by proving that Condition~\eqref{eq:cond_stab_ell} is indeed optimal w.r.t. the convergence of $\lambda_t$ towards $\ell$: we prove instability of the fixed-point in both the supercritical case (see \S~\ref{sec:supercritical}) and the critical case (see \S~\ref{sec:critical_case}).

\subsubsection{Optimality of the global condition for the boundedness of solutions}
We also prove the optimality of Condition~\eqref{eq:global_subcrit} as far as the boundedness of solutions to \eqref{eq:conv_gen_lambda} is concerned: when \eqref{eq:global_subcrit} does not hold, we prove the generic existence of unbounded solutions to \eqref{eq:conv_gen_lambda}, see Appendix~\ref{sec:appendix_boundedness}. 

\subsubsection{Applications to fluctuations of mean-field Hawkes processes}
 A concrete use of  Theorem \ref{th:conv_rate} is to give fluctuation results for the Hawkes process $Z^{i,N}$ defined in Section~\ref{sec:NRE_intro}, whose conditional intensity is given by \eqref{eq:Hawkes_N}. In the asymptotic $(N,t)\to(\infty,\infty)$,  Central Limit Theorems have been established when $\xi=\xi^\emptyset$ given by \eqref{eq:empty_xi} and under the strong subcritical condition~\eqref{eq:subcritical_Lip} (see \cite[Corollary 1]{MR3054533} and \cite[Theorem 10]{MR3449317} for the linear case and \cite[Theorem 2]{DITLEVSEN20171840} for the nonlinear case). From a statistical perspective, these results are important because $Z^{i,N}_t/t$ concentrates  as $(N,t)\to \infty$  towards $\ell$, the unique (under Condition~\eqref{eq:subcritical_Lip}) solution of \eqref{eq:fixed_point_intro}, and $\ell$ can be estimated accordingly. 

However, in the nonlinear framework, the normalization of  the fluctuation result depends on the unknown quantity $m_t:=\int_0^t\lambda_s\rmd s$ making it statistically unusable (see the discussion in \cite{duval2021interacting}, \S~4.2.1). To make  these results statistically meaningful, we need to study the rate of convergence of $m_t/t$ towards $\ell$ (see Equation \eqref{eq:Dec_ellT}), which is made possible by Theorem \ref{th:conv_rate}.   Our main result here is Theorem \ref{th:TCL} providing the fluctuations of $Z^{i,N}_t/t$ around $\ell$ in presence of a source term $\xi$ and for a nonlinear function $\Phi$. The question of fluctuation of Hawkes processes has been recently the subject of an extensive literature, mostly in the univariate case. We refer \textit{e.g.,} to \cite{MR3054533,cattiaux2021limit,Coutin25,Hillairet2022,Horst2024,Torrisi:2016aa,MR3102513} for further reference.

\subsection{Perspectives}
In the excitatory case $h\geq 0$,  we show in this work the optimality of the subcritical condition \eqref{eq:cond_stab_ell} w.r.t. the stability of \eqref{eq:conv_gen_lambda} around a constant intensity $\ell$ solving \eqref{eq:fixed_point_intro}. In the case with inhibition where $h$ is allowed to take negative values, a similar optimality of \eqref{eq:cond_stab_ell} is, to the very least, unclear. 

Another interesting point is to study the emergence of periodic behaviors for $\eqref{eq:conv_gen_lambda}$. Whereas Londen's Theorem implies that no such periodic behaviors are possible in case $h$ is nonnegative and nonincreasing, the case where $h$ may increase or take negative values remains largely open. In some particular case of Erlang kernels with cyclic connectivity and inhibition, periodic behaviors have been exhibited in \cite{DITLEVSEN20171840} (see also \cite{duval2021interacting}) but addressing more general kernels $h$ remains to be done.

Another perspective concerns the possible influence of the local stability condition \eqref{eq:cond_stab_ell} on the particle system \eqref{eq:Hawkes_N} (and not only on its mean-field limit \eqref{eq:conv_gen_lambda}).  The long-term stability of \eqref{eq:Hawkes_N} under \eqref{eq:cond_stab_ell} has been proven in \cite{agathenerine22} in the case of exponential kernels $h(u)=e^{-\alpha u}$, but the general case remains open. Finally, regarding the fluctuation results mentioned in Section~\ref{sec:application_Hawkes}, Theorem~\ref{th:TCL} is established under the strong subcritical condition \eqref{eq:subcritical_Lip} for technical reasons. Examining if this result remains valid  under the weaker condition \eqref{eq:cond_stab_ell} would be a valuable extension. We plan to address these questions in future works.
\subsection{Organisation of the paper}
The rest of the article is organized as follows. Section \ref{sec:main} contains the main results, in particular the stability results of Theorems~\ref{th:conv_rate} and~\ref{th:stab_stationary} in the subcritical case (Section~\ref{sec:subcritical}). Section~\ref{sec:excitatory} gathers all the results concerning the excitatory case. The application to the fluctuations of the Hawkes processes is detailed in Section~\ref{sec:application_Hawkes}. Section~\ref{sec:examples} contains a detailed analysis of examples and particular cases concerning $\Phi$ and $h$. Proofs of the main results are given in Sections~\ref{sec:proofs_WPregular},~\ref{sec:proofs} and~\ref{sec:proofs_excitatory}. Appendix~\ref{sec:appendix_boundedness} contains the results and proofs concerning the boundedness of solutions to \eqref{eq:conv_gen_lambda}. Appendix~\ref{sec:prf_TCL} contains the proof of the fluctuation result. Technical results are gathered in Appendix~\ref{sec:auxiliary}.

\section{Main results\label{sec:main}}

\subsection{Well-posedness and preliminary results}

We begin with the following well-posedness result for \eqref{eq:conv_gen_lambda}:
\begin{proposition}\label{prop:WP_solC1}
Suppose that $\Phi$ is Lipschitz continuous, that $\xi$ is locally bounded and $h$ is locally integrable on $[0, +\infty)$. There is a unique locally bounded $ (\lambda_{ t})_{ t\geq0}$ solution to \eqref{eq:conv_gen_lambda} that is nonnegative. In the case $ \xi$ is continuous, such a solution $ \lambda$ is continuous. Suppose moreover that $\Phi$ is of class $C^{1}$ with $ \Phi^\prime$ Lipschitz continuous, that $h$ is continuous on $[0, +\infty)$ and that $\xi$ is $C^{1}$ on $[0, +\infty)$. Then, $ (\lambda_{ t})_{ t\geq0}$ solution to \eqref{eq:conv_gen_lambda}  is of class $ \mathcal{ C}^{ 1}$ on $[0, +\infty)$.
\end{proposition}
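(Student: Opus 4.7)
For existence and uniqueness, the plan is a Banach fixed point on small intervals. Pick $T_{0}>0$ small enough that $|\Phi|_{Lip}\int_{0}^{T_{0}}|h(u)|\rmd u<1$, which is possible since $h\in L^{1}_{loc}$. On $L^{\infty}([0,T_{0}])$, the operator
\[
\mathcal{T}(\lambda)_{t}:=\Phi\Big(\xi_{t}+\int_{0}^{t}h(t-s)\lambda_{s}\,\rmd s\Big)
\]
is well-defined by local boundedness of $\xi$, takes values in $[0,+\infty)$ since $\Phi\geq 0$, and is a strict contraction with constant $|\Phi|_{Lip}\|h\|_{L^{1}([0,T_{0}])}<1$. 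This produces a unique non-negative solution on $[0,T_{0}]$, which I extend to $[0,+\infty)$ by iterating on $[kT_{0},(k+1)T_{0}]$: at each step $\int_{0}^{kT_{0}}h(t-s)\lambda_{s}\rmd s$ is absorbed into a new locally bounded source term, and the change of variable $u=t-s$ keeps the contraction constant equal to $|\Phi|_{Lip}\|h\|_{L^{1}([0,T_{0}])}$ at every step.

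For continuity, the key observation is that for $h\in L^{1}_{loc}$ and any locally bounded $\lambda$, the convolution $g(t):=\int_{0}^{t}h(t-s)\lambda_{s}\rmd s$ is automatically continuous, since
\[
|g(t+\ve)-g(t)|\leq\|\lambda\|_{L^{\infty}([0,T])}\Big(\int_{0}^{\ve}|h(u)|\rmd u+\int_{0}^{t}|h(u+\ve)-h(u)|\rmd u\Big)
\]
tends to $0$ as $\ve\to 0^{+}$ by local integrability and $L^{1}$-continuity of translations. Composing with the continuous $\Phi$ then yields $\lambda\in C^{0}([0,+\infty))$.

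The $C^{1}$ assertion is the delicate part: since $h$ is only continuous (not differentiable) one cannot differentiate $h\ast\lambda$ directly, and a bootstrap is required. Set $x_{t}:=\xi_{t}+\int_{0}^{t}h(t-s)\lambda_{s}\rmd s$ so that $\lambda_{t}=\Phi(x_{t})$; substituting $u=t+\ve-s$ in the first convolution and splitting at $u=t$ gives
\[
x_{t+\ve}-x_{t}=(\xi_{t+\ve}-\xi_{t})+\int_{t}^{t+\ve}h(u)\lambda_{t+\ve-u}\rmd u+\int_{0}^{t}h(t-s)(\lambda_{s+\ve}-\lambda_{s})\rmd s,
\]
where the first two summands are $O(\ve)$ (using $\xi\in C^{1}$ and local boundedness of $h$). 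Writing $f_{\ve}(t):=|\lambda_{t+\ve}-\lambda_{t}|$ and combining with $|\lambda_{t+\ve}-\lambda_{t}|\leq|\Phi|_{Lip}|x_{t+\ve}-x_{t}|$ yields the linear Volterra inequality $f_{\ve}(t)\leq C\ve+|\Phi|_{Lip}\int_{0}^{t}|h(t-s)|f_{\ve}(s)\rmd s$, whose resolvent-form Gronwall bound gives $f_{\ve}(t)\leq C(T)\ve$ on any compact $[0,T]$; hence $\lambda$ is locally Lipschitz. The a.e.\ derivative $\dot\lambda$ then exists and is locally bounded, and dominated convergence applied to the same decomposition — with $(\lambda_{s+\ve}-\lambda_{s})/\ve$ uniformly bounded by the Lipschitz constant and converging a.e.\ to $\dot\lambda(s)$ — shows that $x$ is differentiable with $x'_{t}=\xi'_{t}+h(t)\lambda_{0}+\int_{0}^{t}h(t-s)\dot\lambda_{s}\rmd s$, a right-hand side continuous by the argument of the continuity step. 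Therefore $x\in C^{1}$ and $\lambda=\Phi(x)\in C^{1}$. The main obstacle is precisely this bootstrap: the trick is to exploit the fixed-point structure twice — first transferring $C^{1}$ regularity of $\xi$ to Lipschitz regularity of $\lambda$ via a Volterra-Gronwall estimate on difference quotients, then closing from Lipschitz to $C^{1}$ via dominated convergence on the same decomposition.
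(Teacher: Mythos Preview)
Your argument is correct. The existence/uniqueness via a local Banach fixed point is essentially a repackaging of the paper's Picard iteration combined with its convolution-Gronwall lemma, and your direct proof that $t\mapsto\int_{0}^{t}h(t-s)\lambda_{s}\,\rmd s$ is continuous is slightly more economical than the paper's route of showing each Picard iterate is continuous and passing to the uniform limit. The genuine difference lies in the $C^{1}$ part. The paper mollifies $h$ to a $C^{1}$ kernel $h_{\varepsilon}$, so that the result is immediate for the approximate solution $\lambda_{\varepsilon}$, and then proves that $(\lambda_{\varepsilon}')_{\varepsilon>0}$ is locally uniformly Cauchy; this requires uniform-in-$\varepsilon$ a priori bounds on $\lambda_{\varepsilon}$ and $\lambda_{\varepsilon}'$ and a somewhat lengthy triangular comparison. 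Your bootstrap instead stays with the original $h$: a Volterra-Gronwall on the difference quotients gives local Lipschitz regularity of $\lambda$, and then Rademacher plus dominated convergence upgrades this to $C^{1}$. Your route is shorter and avoids tracking an approximating family, while the paper's mollification has the advantage of making the derivative identity $\lambda_{t}'=\Phi'(x_{t})\bigl(\xi_{t}'+h(t)\Phi(\xi_{0})+\int_{0}^{t}h(t-u)\lambda_{u}'\,\rmd u\bigr)$ transparent from the outset (it is just the chain rule when $h\in C^{1}$), whereas in your argument this identity emerges only at the end from the formula for $x_{t}'$.
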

Proof of Proposition \ref{prop:WP_solC1} is given in Section \ref{sec:proof_WP_MF}.
The next result provides the identification of the limit of any solution $\lambda$ to \eqref{eq:conv_gen_lambda} in terms of fixed-point solution to \eqref{eq:fixed_point_intro} as discussed in Section~\ref{sec:criticality_intro}:
\begin{proposition}[Identification of the limit]
\label{prop:conv_X_lambda}
Suppose that $\Phi$ is a Lipschitz continuous function, that $h$ is integrable on $[0, +\infty)$ and that $ \xi$ is bounded with $ \xi_{ t} \xrightarrow[ t\to\infty]{} 0$. Denote by
\begin{equation}
\label{eq:kappa}
\kappa:= \int_{ 0}^{ +\infty} h(u) {\rm d}u.
\end{equation}
Let $x^{ \xi}= \left(x^{ \xi}_{ t}\right)_{ t\geq0}$ be the solution to \eqref{eq:conv_gen_X} (resp. $ \lambda^{ \xi}= \left(\lambda^{ \xi}_{ t}\right)_{ t\geq0}$ be the solution to \eqref{eq:conv_gen_lambda}) with source term $ \xi$. Then, if one among $ \left\lbrace x^{ \xi}, \lambda^{ \xi}\right\rbrace$ admits a limit as $t\to\infty$, both do, and the respective limits $\ell_{ \lambda}:= \lim_{ t\to \infty} \lambda_{ t}^{ \xi}$ and $ \ell_{ x}:= \lim_{ t\to\infty} x_{ t}^{ \xi}$ satisfy 
\begin{equation}
\label{eq:lambda_x_inf}
\begin{cases}
\ell_{ \lambda}& = \Phi \left(\ell_{ x}\right),\\
\ell_{ x}&= \kappa \ell_{ \lambda}
\end{cases}
\end{equation}
In particular, $ \ell_{ \lambda}$ and $ \ell_{ x}$ satisfy the fixed-point relations
\begin{equation}
\label{eq:fixed_point_lambda}
\ell_{ \lambda} = \Phi \left( \kappa \ell_{ \lambda}\right) 
\end{equation}
and
\begin{equation}
\label{eq:fixed_point_X}
\ell_{ x}= \kappa \Phi(\ell_{ x})
\end{equation}
where $ \kappa$ in \eqref{eq:lambda_x_inf}, \eqref{eq:fixed_point_lambda} and \eqref{eq:fixed_point_X} is given by \eqref{eq:kappa}.
\end{proposition}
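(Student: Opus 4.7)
The starting point is the tautological identity $\lambda_t = \Phi(x_t)$ for all $t\geq 0$, which follows by direct comparison of \eqref{eq:conv_gen_lambda} with \eqref{eq:conv_gen_X}. By continuity of $\Phi$, this immediately yields the easy direction: if $x_t\to\ell_x$, then $\lambda_t\to\Phi(\ell_x)$, so $\lambda$ also admits a limit and $\ell_\lambda=\Phi(\ell_x)$.

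The substance of the proof is the reverse implication: if $\lambda_t\to\ell_\lambda$, then $x_t\to\kappa\ell_\lambda$. First I would observe that $\lambda$ is bounded on $[0,+\infty)$, since it is locally bounded by Proposition~\ref{prop:WP_solC1} and admits a finite limit at infinity. Performing the change of variables $u=t-s$ in \eqref{eq:conv_gen_X} and adding and subtracting $\ell_\lambda$, one writes
\begin{equation*}
x_t-\xi_t \;=\; \ell_\lambda\int_0^t h(u)\,\rmd u \;+\; \int_0^t h(u)\bigl(\lambda_{t-u}-\ell_\lambda\bigr)\,\rmd u.
\end{equation*}
The first term converges to $\kappa\ell_\lambda$ by integrability of $h$. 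The second I would handle by a standard $\varepsilon/A$-splitting: fix $\varepsilon>0$, choose $A>0$ so large that $\int_A^{+\infty}|h(u)|\,\rmd u \leq \varepsilon$, and then $T_\varepsilon$ so large that $|\lambda_s-\ell_\lambda|\leq\varepsilon$ for all $s\geq T_\varepsilon$. For $t\geq T_\varepsilon+A$, the contribution on $[0,A]$ is bounded by $\varepsilon\,\|h\|_1$, while the contribution on $[A,t]$ is bounded by $(\|\lambda\|_\infty+|\ell_\lambda|)\,\varepsilon$ thanks to the boundedness of $\lambda$. Letting $\varepsilon\to 0$ and using $\xi_t\to 0$ then gives $x_t\to\kappa\ell_\lambda$, which is the second line of \eqref{eq:lambda_x_inf}.

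The fixed-point identities \eqref{eq:fixed_point_lambda} and \eqref{eq:fixed_point_X} then follow by combining both lines of \eqref{eq:lambda_x_inf}: substituting $\ell_x=\kappa\ell_\lambda$ in $\ell_\lambda=\Phi(\ell_x)$ yields \eqref{eq:fixed_point_lambda}, while substituting $\ell_\lambda=\Phi(\ell_x)$ in $\ell_x=\kappa\ell_\lambda$ yields \eqref{eq:fixed_point_X}. The only mildly delicate step is the $\varepsilon/A$-splitting of the convolution, which requires both the uniform-in-compact-time control from the convergence of $\lambda$ (for the near-diagonal part $u\in[0,A]$) and the integrability of $h$ at infinity (for the tail $u\in[A,t]$); all other steps are routine consequences of the continuity of $\Phi$ and of $h\in L^1$.
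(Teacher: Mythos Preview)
Your proof is correct and matches the paper's approach, which simply states that the result ``is a straightforward consequence of the dominated convergence theorem and the continuity of $\Phi$.'' Your $\varepsilon/A$-splitting is just an explicit unwrapping of the dominated convergence argument applied to $\int_0^t h(u)(\lambda_{t-u}-\ell_\lambda)\,\rmd u$, with $|h|(\|\lambda\|_\infty+|\ell_\lambda|)\in L^1$ as the dominating function.
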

Proposition~\ref{prop:conv_X_lambda} is a straightforward consequence of the dominated convergence theorem and the continuity of $\Phi$.  From now on, we are interested in the possible limits $\ell_\lambda$ of $\lambda^\xi$, which we will denote by $\ell$ if no confusion arises. 

\medskip

We show now that any fixed-point $\ell$ to \eqref{eq:fixed_point_lambda}, specific equilibrium source terms $\xi= \xi^{eq, \ell}$ gives rise to constant solutions $\lambda\equiv \ell$ to \eqref{eq:conv_gen_lambda} :
\begin{proposition}
\label{prop:equilibrium}
Suppose that $ \Phi$ is Lipschitz continuous, strictly monotone of class $C^{1}$ whose derivative is Lipschitz continuous. Suppose that $h$ is bounded, continuous and integrable on $[0, +\infty)$. Suppose that $\xi$ is $C^{1}$ on $[0, +\infty)$ with $ \xi_{ t} \xrightarrow[ t\to \infty]{}0$. Let $ \lambda= \lambda^{ \xi}$ be the solution to \eqref{eq:conv_gen_lambda} with source term $ \xi$. Then, the following statements are equivalent:
\begin{enumerate}[label=(\alph*)]
\item \label{it:xieq} the source term $\xi$ in \eqref{eq:conv_gen_lambda} is equal to \begin{equation}
\label{eq:xi_stationary}
\xi_{ t}=\xi_{ t}^{ eq, \ell}:= \int_{ -\infty}^{0} h(t-s) \ell {\rm d}s= \ell \int_{ t}^{+\infty} h(u)  {\rm d}u,
\end{equation} for some  $\ell= \Phi \left(\kappa \ell\right)$ solution to the fixed-point Equation \eqref{eq:fixed_point_lambda}.

\item \label{it:lambda_const} the solution $ \lambda$ is constant: $ \lambda_{ t}= \lambda_{ 0}$ for all $t\geq0$.
\end{enumerate}
The constant value $ \lambda_0$ in Item (b) corresponds to the fixed-point $\ell = \Phi( \kappa \ell)$ found in Item (a). We will call $\xi^{eq, \ell}$ defined by \eqref{eq:xi_stationary} the equilibrium source term associated to the fixed-point $\ell$.
\end{proposition}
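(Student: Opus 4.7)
\textbf{Proof plan for Proposition~\ref{prop:equilibrium}.}

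The plan is to verify each direction by directly substituting into the fixed-point equation \eqref{eq:conv_gen_lambda} and using the uniqueness part of Proposition~\ref{prop:WP_solC1}.

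For the implication \ref{it:xieq} $\Rightarrow$ \ref{it:lambda_const}, I would take $\xi_t = \xi_t^{eq,\ell} = \ell\int_t^{+\infty} h(u)\,{\rm d}u$ with $\ell = \Phi(\kappa\ell)$ and test whether the constant function $\lambda_t \equiv \ell$ solves \eqref{eq:conv_gen_lambda}. Plugging in and applying the change of variable $u = t-s$ in the convolution integral gives
\begin{equation*}
\xi_t^{eq,\ell} + \int_0^t h(t-s)\,\ell\,{\rm d}s = \ell \int_t^{+\infty} h(u)\,{\rm d}u + \ell\int_0^t h(u)\,{\rm d}u = \ell \kappa,
\end{equation*}
so that $\Phi\bigl(\xi_t^{eq,\ell} + \int_0^t h(t-s)\,\ell\,{\rm d}s\bigr) = \Phi(\kappa\ell) = \ell$. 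Hence $\lambda\equiv\ell$ is a (necessarily nonnegative and locally bounded) solution, and uniqueness in Proposition~\ref{prop:WP_solC1} forces it to be \emph{the} solution $\lambda^\xi$, which concludes this direction and also identifies the constant value as $\ell$.

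For the converse \ref{it:lambda_const} $\Rightarrow$ \ref{it:xieq}, assume $\lambda_t \equiv c := \lambda_0$. Evaluated at any $t\geq 0$, \eqref{eq:conv_gen_lambda} reads $c = \Phi\bigl(\xi_t + c\int_0^t h(u)\,{\rm d}u\bigr)$. Since $\Phi$ is strictly monotone and continuous it is injective on its domain, so its argument must be constant in $t$, equal to some value that I denote $A := \Phi^{-1}(c)$. This yields
\begin{equation*}
\xi_t = A - c\int_0^t h(u)\,{\rm d}u, \qquad t \geq 0.
\end{equation*}
Letting $t\to\infty$ and using the assumption $\xi_t \to 0$ together with the integrability of $h$, one gets $A = c\kappa$, i.e.\ $c = \Phi(\kappa c)$, so $c$ is a solution $\ell$ of \eqref{eq:fixed_point_lambda}. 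Substituting back gives $\xi_t = \ell\kappa - \ell\int_0^t h(u)\,{\rm d}u = \ell\int_t^{+\infty} h(u)\,{\rm d}u = \xi_t^{eq,\ell}$, which is exactly \eqref{eq:xi_stationary}.

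I do not expect any serious obstacle: the argument is essentially a direct substitution combined with uniqueness and invertibility of $\Phi$. The only subtle point is that the strict monotonicity of $\Phi$ is genuinely used in the $\Rightarrow$ direction to deduce that the argument of $\Phi$ is constant from the fact that its image is; without it one could only assert that the argument stays in the level set $\Phi^{-1}(\{c\})$, which could a priori be larger than a singleton. The $C^1$ regularity of $\Phi,\xi$ and the continuity of $h$ assumed in the statement are not needed for the equivalence itself; they are inherited from the regularity framework in which Proposition~\ref{prop:WP_solC1} is applied.
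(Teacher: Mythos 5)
Your proof is correct, and in the direction \ref{it:xieq} $\Rightarrow$ \ref{it:lambda_const} it takes a genuinely different and more elementary route than the paper. The paper works with the derivative: it invokes the $C^1$ regularity result (formula \eqref{eq:derivative_lambda} with source $\rho(h,\xi,t)=\xi'_t+h(t)\Phi(\xi_0)$), observes that this source vanishes identically when $\xi=\xi^{eq,\ell}$, and then applies the convolution Gr\"onwall lemma (Lemma~\ref{lem:Gronwall_eps}) to the inequality $|\lambda'_t|\le C_T\int_0^t|h(t-u)|\,|\lambda'_u|\,{\rm d}u$ to conclude $\lambda'\equiv 0$. You instead verify by direct substitution that the constant function $\ell$ solves \eqref{eq:conv_gen_lambda} with source $\xi^{eq,\ell}$, and then appeal to the uniqueness of locally bounded solutions from Proposition~\ref{prop:WP_solC1}; this buys you a shorter argument that, as you correctly observe, does not use the $C^1$ hypotheses on $\Phi'$, $\xi$ or the continuity of $h$ (only what well-posedness requires), whereas the paper's route fits into the machinery of Proposition~\ref{prop:C1} and the quantity $\rho(h,\xi,t)$ that it reuses later for the monotonicity results. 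Your converse direction (constancy $\Rightarrow$ $\xi=\xi^{eq,\ell}$) is essentially identical to the paper's: pass to the limit $t\to\infty$ to identify $\lambda_0=\Phi(\kappa\lambda_0)$, then use the injectivity of the strictly monotone $\Phi$ to solve for $\xi_t$; the only cosmetic difference is that you first extract the constant argument $A=\Phi^{-1}(\lambda_0)$ and identify $A=\kappa\lambda_0$ in the limit, while the paper identifies the fixed point first and then writes $\kappa\ell=\Phi^{-1}(\ell)=\xi_t+\ell\int_0^t h(u)\,{\rm d}u$. Both uses of strict monotonicity are exactly where the paper needs it too, so there is no gap.
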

Proof of Proposition \ref{prop:equilibrium} is given in Section \ref{sec:proof_prop_equilibrium}.

\subsection{Stability and convergence rates around equilibrium in the subcritical case}
\label{sec:subcritical}
Consider in this section some $\ell=\ell_\lambda$ solving \eqref{eq:fixed_point_lambda}. Suppose that $\ell$ satisfies the  local subcritical condition \eqref{eq:cond_stab_ell},  with the notation $ \kappa= \int_{ 0}^{+\infty} h(u) {\rm d}u$ it rewrites as
\begin{equation*}
\left\Vert h \right\Vert_{ 1}\left\vert \Phi^{ \prime} \left( \kappa\ell\right) \right\vert <1,
\end{equation*}
As discussed in Section~\ref{sec:contributions_intro}, our first main result concerns a local version of Londen's Theorem (Theorem~\ref{th:londen}) around $\ell$, under Condition \eqref{eq:cond_stab_ell}: we require mostly (see \eqref{eq:prox_lambda_ell} below) that $\lambda_t$ ultimately stays within a small neighborhood of $\ell$. Under this assumption, we obtain convergence of $\lambda_t$ to $\ell$ as well as rates of convergence under suitable decay assumptions on both $\xi$ and $h$. Note that contrary to Theorem~\ref{th:londen}, we do not require anything on the monotonicity of both $\Phi$ and $h$ nor on the sign of $h$: Theorem~\ref{th:conv_rate} is valid in a context where possibly $h$ takes negative values. One should mention at this point the previous result of Ney \cite{Ney1977}, which establishes asymptotic estimates  in the particular case where $\Phi(0)=0$ and moment conditions on $h$.  Before stating the result, introduce  notation: 
\begin{equation}
\label{eq:Ht}
    H_t:= \int_t^{+\infty} \vert h(u) \vert {\rm d}u,\ t\geq 0.
\end{equation}
In the second part of Theorem~\ref{th:conv_rate} below, we will require one among the following decay assumptions for both $\xi$ and $H$ given by \eqref{eq:Ht}:
\begin{assumption}
\label{ass:decay}
Let $ \zeta$ be either $\xi$ or $H$ given by \eqref{eq:Ht}. Let $A, a, S$ be positive parameters. We will say that
\begin{enumerate}[label=(\alph*)]
    \item $\zeta$ has exponential decay as $t\to\infty$ with rate $a$ and constant $A$ if
    \begin{equation}
\label{eq:zeta_exp}
\tag{$\mathcal{E}_{a, A}$}
\left\vert \zeta_{ t} \right\vert \leq A e^{ -a t},\ t\geq0.
    \end{equation}
    \item $\zeta$ has polynomial decay as $t\to\infty$ with rate $a$ and constant $A$ if
    \begin{equation}
\label{eq:zeta_pol}
\tag{$\mathcal{P}_{a, A}$}
\left\vert \zeta_{ t} \right\vert\leq A t^{ -a},\ t>0.
\end{equation}
\item $\zeta$ has bounded support with parameter $S$ if
\begin{equation}
\label{eq:zeta_S}
\tag{$\mathcal{B}_{S}$}
\zeta_{ t}\equiv 0, \ t>S.
\end{equation}
\end{enumerate}
\end{assumption}

\begin{example}
The empty source term  $\xi=\xi^\emptyset\equiv 0$ (recall \eqref{eq:empty_xi}) plays an important role in practical applications. This case is of course covered by all items of Assumption~\ref{ass:decay}. A transposition of the rates of convergence of Theorem~\ref{th:conv_rate} to $\xi=\xi^\emptyset$ is given in Theorem~\ref{th:conv_empty}, Eq.~\eqref{eq:rates_empty} below.
\end{example}

\medskip
The main result is then
\begin{theorem}
\label{th:conv_rate}
Suppose $\Phi$ is a twice differentiable Lipschitz function such that $\|\Phi''\|_{\infty}<\infty$, $h$ is integrable on $[0, +\infty)$, $ \xi$ is bounded such that $ \xi_{ t} \xrightarrow[ t\to\infty]{}0$. Suppose that $\ell$ solves \eqref{eq:fixed_point_lambda} and verifies the subcritical condition \eqref{eq:cond_stab_ell}. Let $ \lambda$ be the solution to \eqref{eq:conv_gen_lambda} with source term $ \xi$. Suppose that $\lambda$ is bounded. Fix some $ \varepsilon_{ 0}>0$ small enough such that \begin{equation}
\label{eq:rate_tauT}
\tau=\tau_{\varepsilon_0}:=\left\Vert h \right\Vert_{ 1} \left(\left\vert \Phi^{ \prime}( \kappa\ell) \right\vert+ \varepsilon_{ 0}\frac{ \left\Vert \Phi^{ \prime \prime} \right\Vert_{ \infty}}{ 2}\left( 1+ 2\ell  + \left\Vert \lambda \right\Vert_{ \infty} + \left\Vert h \right\Vert_{ 1} \right)\right)\in [0, 1).
\end{equation}
Consider the following asymptotic proximity of the solution $\lambda$ in an $ \varepsilon_0$-neighborhood of $\ell$: suppose that there exists some $t_0\geq 0$ such that
\begin{equation}
\label{eq:prox_lambda_ell}
 \left\vert \lambda_{ t} - \ell\right\vert < \varepsilon_{ 0},\quad \forall t\geq t_0.
\end{equation}
Then, we have convergence to equilibrium:
\begin{equation*}
    \lambda_{ t} \xrightarrow[ t\to \infty]{} \ell.
\end{equation*}
Secondly, suppose that both $\xi$ and $H$ satisfy one property mentioned in Assumption~\ref{ass:decay} for appropriate parameters that are specified below. Then, there exists some $C>0$ independent of $\varepsilon_0$ and $\sigma(t_0)\geq 0$ such that for all $t\geq \sigma(t_0)$
\begin{equation}
\label{eq:decay_Lambda}
    |\lambda_{t}-\ell |
\le  C \Lambda(t; t_0, \tau)
\end{equation}
where the form of the decay function $ \Lambda(t; t_0, \tau)$ is given as follows: 
\begin{enumerate}[label=(\alph*)]
\item  \label{it:xi_exp_decay} \textit{$ \xi$ with exponential decay:} suppose that there exist $A, a>0$ such that $ \xi$ satisfies $ \left(\mathcal{E}_{a, A}\right)$. Then,
\begin{align}
\label{eq:rates_lambda_exp}
\Lambda(t; t_{ 0}, \tau)=\ \begin{cases}e^{-\sqrt{ b \log \left(1/ \tau\right)}\sqrt{t}} & \text{ if for some $B, b>0$,  $H$ satisfies $ \left(\mathcal{E}_{b, B}\right)$}\\
\log(t) ^{b}t^{-b}& \text{ if for some $B, b>0$,  $H$ satisfies $ \left(\mathcal{P}_{b, B}\right)$}\\
e^{-\frac{\log \left(1/ \tau\right)}{S_h\vee t_0}t}&  \text{ if for some $S_h>0$,  $H$ satisfies $ \left(\mathcal{B}_{S_h}\right)$} .\end{cases}
 \end{align}
\item  \label{it:xi_pol_decay}\textit{$ \xi$ with polynomial decay:} suppose that there exist $A, a>0$ such that $ \xi$ satisfies $ \left(\mathcal{P}_{a, A}\right)$.
Then, 
  \begin{align}
  \label{eq:rates_lambda_pol}
\Lambda(t; t_{ 0}, \tau)=\ \begin{cases}t^{-a}&  \text{ if for some $B, b>0$,  $H$ satisfies $ \left(\mathcal{E}_{b, B}\right)$}\\
t^{-a}\vee \log(t)^b t^{-b}& \text{ if for some $B, b>0$,  $H$ satisfies $ \left(\mathcal{P}_{b, B}\right)$}\\
t^{-a}& \text{ if for some $S_h>0$,  $H$ satisfies $ \left(\mathcal{B}_{S_h}\right)$} .\end{cases}
 \end{align}
 \item  \label{it:xi_bounded_support}\textit{$ \xi$ with bounded support:} suppose that there exist $S_\xi>0$ such that $ \xi$ satisfies $ \left(\mathcal{B}_{S_\xi}\right)$.
Then, 
\begin{align}
\label{eq:rates_lambda_S}
\Lambda(t; t_{ 0}, \tau)=\ \begin{cases}e^{-\sqrt{ b \log \left(1/ \tau\right)}\sqrt{t}} & \text{ if for some $B, b>0$,  $H$ satisfies $ \left(\mathcal{E}_{b, B}\right)$}\\
\log(t) ^{b}t^{-b}& \text{ if for some $B, b>0$,  $H$ satisfies $ \left(\mathcal{P}_{b, B}\right)$}\\
e^{-\frac{\log \left(1/ \tau\right)}{S_\xi\vee S_h\vee t_0}t}&  \text{ if for some $S_h>0$,  $H$ satisfies $ \left(\mathcal{B}_{S_h}\right)$} .\end{cases}
 \end{align}
\end{enumerate}
\end{theorem}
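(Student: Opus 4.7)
The plan is to proceed in three stages: a linearisation around $\ell$, a soft limsup argument yielding the qualitative convergence, and a quantitative iteration of the resulting Volterra inequality producing the rates.

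First I would exploit the fixed-point relation $\ell = \Phi(\kappa\ell)$ to rewrite
\[
\lambda_t - \ell \;=\; \Phi(\kappa\ell + A_t) - \Phi(\kappa\ell), \qquad A_t := \xi_t - \ell \int_t^{+\infty} h(u)\,\rmd u + \int_0^t h(t-s)(\lambda_s-\ell)\,\rmd s.
\]
A Taylor expansion at $\kappa\ell$ with integral remainder, using $\|\Phi''\|_\infty<\infty$, yields $|\lambda_t-\ell| \le |\Phi'(\kappa\ell)|\,|A_t| + \tfrac{1}{2}\|\Phi''\|_\infty A_t^2$. For $t$ large enough that each of $|\xi_t|$, $H_t$ and $H_{t-t_0}$ is at most $\varepsilon_0$, the proximity hypothesis \eqref{eq:prox_lambda_ell} gives precisely the pointwise bound $|A_t| \le \varepsilon_0\bigl(1 + 2\ell + \|\lambda\|_\infty + \|h\|_1\bigr)$ appearing in \eqref{eq:rate_tauT}. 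Substituting this into the quadratic term $\tfrac{1}{2}\|\Phi''\|_\infty A_t^2 \le \tfrac{1}{2}\|\Phi''\|_\infty\, \varepsilon_0\bigl(1+2\ell+\|\lambda\|_\infty+\|h\|_1\bigr)\,|A_t|$ and developing $|A_t|$ once more produces the key Volterra-type inequality
\[
|\lambda_t - \ell| \;\le\; \frac{\tau}{\|h\|_1}\int_0^t |h(t-s)|\,|\lambda_s - \ell|\,\rmd s \;+\; F_t, \qquad t \ge \sigma(t_0),
\]
where $F_t \le \frac{\tau}{\|h\|_1}\bigl[|\xi_t| + \ell H_t + (\|\lambda\|_\infty+\ell)(H_{t-t_0}-H_t)\bigr]$ vanishes as $t\to\infty$.

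For the qualitative convergence, I would take the limsup in this inequality. Since $h$ is integrable and $\lambda$ bounded, a standard cut-off argument (splitting the integral near the running endpoint $t$) shows that $\limsup_{t\to\infty}\int_0^t |h(t-s)|\,|\lambda_s-\ell|\,\rmd s \le \|h\|_1 \delta_\infty$, with $\delta_\infty := \limsup_{t\to\infty} |\lambda_t-\ell|$. Since $F_t\to 0$, the Volterra inequality then forces $\delta_\infty \le \tau \delta_\infty$; combined with $\tau<1$ this implies $\delta_\infty = 0$, proving $\lambda_t\to\ell$.

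For the quantitative rates, I would iterate the Volterra inequality $n$ times, producing
\[
|\lambda_t - \ell| \;\le\; \sum_{k=0}^{n-1}\bigl(\tau/\|h\|_1\bigr)^k (|h|^{\ast k} \ast F)(t) \;+\; \tau^n \bigl(\|\lambda\|_\infty + \ell\bigr).
\]
The remainder decays geometrically in $n$ at rate $\tau$, and the source contribution $(|h|^{\ast k}\ast F)(t)$ inherits the decay of $F$ (hence of $\xi$ and $H$) convolved with the iterated kernel. In the \emph{bounded-support} cases, the convolutions truncate and one iterates block-by-block over intervals of length $S_h \vee S_\xi \vee t_0$, yielding the exponential rates in \eqref{eq:rates_lambda_exp} and \eqref{eq:rates_lambda_S}. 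In the \emph{exponential-$H$} regime, $|h|^{\ast k}$ satisfies an Erlang-type tail bound and optimising the truncation index $n=n(t)$ to balance the geometric factor $\tau^n$ against this Gamma tail produces the stretched exponential $e^{-\sqrt{b\log(1/\tau)}\sqrt t}$; the optimal trade-off is of the form $n(t)\sim \sqrt t$. In the \emph{polynomial-$H$} regime, direct estimates of iterated-convolution tails transport the polynomial decay of $F$ to the left-hand side, with an additional $\log(t)^b$ factor when $\xi$ and $H$ share the same polynomial rate $b$, reflecting a resonance in the iteration.

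The main obstacle is the last step. The absence of sign or monotonicity assumptions on $h$ prevents the use of a Paley--Wiener/Laplace-transform resolvent argument, so the decay must be extracted by a direct iteration whose bookkeeping---especially the optimisation of $n(t)$ in the stretched-exponential regime---is delicate. A secondary difficulty is that the quadratic Taylor remainder has to be absorbed uniformly along the iteration, which is exactly why $\varepsilon_0$ and the constant $1+2\ell+\|\lambda\|_\infty+\|h\|_1$ enter the definition \eqref{eq:rate_tauT} of $\tau$ from the outset rather than appearing as a posteriori corrections.
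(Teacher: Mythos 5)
Your linearisation step is exactly the paper's: the Taylor expansion of $\Phi$ at $\kappa\ell$, the splitting of the convolution at $t_0$ using \eqref{eq:prox_lambda_ell}, and the absorption of the quadratic remainder into the constant $\tau$ of \eqref{eq:rate_tauT} reproduce the paper's inequality \eqref{eq:Gron_lambda_ell}. After that you take a genuinely different route. For the qualitative convergence you use a limsup contraction argument ($\delta_\infty\le\tau\delta_\infty$ with $\delta_\infty<\infty$ by boundedness, hence $\delta_\infty=0$), which is shorter than the paper, where convergence is read off from the quantitative estimate of Proposition~\ref{prop:conv_rate} with $k+1=\lfloor\sqrt t\rfloor$, $M=t/(k+1)$. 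For the rates, the paper never iterates the kernel itself: it iterates a sup-over-tails inequality, bounding $\sup_{t\ge T_1}|\lambda_t-\ell|$ by $\tau\sup_{t\ge T_0}|\lambda_t-\ell|$ plus source terms on windows $T_j=(j+1)M$, and then optimises over $(k,M)$ with $(k+1)M=t$ (Lemma~\ref{lem:rate_sum_xi} handles the resulting geometric sums of $v^\xi$). You instead iterate the pointwise Volterra inequality $n$ times, producing iterated kernels $|h|^{\ast k}$, and balance $\tau^{n}$ against tail bounds for $|h|^{\ast k}\ast F$, optimising $n(t)$ ($\sim\sqrt t$ in the exponential case, $\sim\log t$ in the polynomial case, $\sim t/S$ in the compact-support case). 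Both schemes trade the geometric factor $\tau^{\#\text{iterations}}$ against the spreading of the source over windows of length $t/\#\text{iterations}$, and with a union-bound tail estimate such as $\int_T^{+\infty}|h|^{\ast k}(u)\,{\rm d}u\le k\|h\|_1^{k-1}H_{T/k}$ your scheme does recover the stated rate classes (even marginally sharper constants or fewer logarithms in some regimes); the paper's block iteration buys simpler bookkeeping because the early times are automatically absorbed into the single term $c_0H_{T_1-T_0}$.

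Two points in your sketch need to be made honest, though neither is fatal. First, your Volterra inequality only holds for $t\ge\sigma(t_0)$, so the $n$-fold substitution is not directly legitimate: each substitution evaluates $|\lambda_s-\ell|$ at times $s<\sigma(t_0)$ where the inequality is unavailable. The standard repair is to bound those contributions by $(\|\lambda\|_\infty+\ell)\,H_{t-\sigma(t_0)}$ and absorb them into the source $F$ at every step; since this extra term lies in the same decay class as $H$, the rates are unaffected, but it must be written. Second, the "Erlang-type tail bound'' on $|h|^{\ast k}$ must be stated and proved under the sole hypothesis on $H$ (the union bound above suffices; genuine exponential-moment or resolvent arguments are not available without further structure on $h$), and the prefactors polynomial in $n(t)$ produced by summing $n$ terms have to be absorbed into the exponent, which works because your optimisation yields a strictly larger constant than $\sqrt{b\log(1/\tau)}$. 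Also, your attribution of the $\log(t)^b$ factor to a "resonance'' when $\xi$ and $H$ share the same rate does not match the paper's accounting (there the logarithm comes from the choice $k\sim\log t$, $M\sim t/\log t$ whenever $H$ decays polynomially); this is harmless since your bound would still imply \eqref{eq:rates_lambda_pol}, but the heuristic should not be stated as the mechanism.
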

\begin{remark}
The choice of the parameter $\varepsilon_0>0$ defining $\tau = \tau_{\varepsilon_0}\in[0,1)$ in \eqref{eq:rate_tauT} depends on the a priori bound one has on $ \Vert \lambda \Vert_\infty$. This may not be therefore explicit in general. There are however several generic situations where this is the case:
    \begin{enumerate}[label=(\alph*)]
        \item Suppose that $ \Phi$ is bounded. Then, $\Vert \lambda \Vert_\infty\leq \Vert \Phi \Vert_\infty$, so that  $\varepsilon_0$ depends on $\Phi$ and $h$ in an explicit way.
        \item Consider the framework of Theorem~\ref{th:stab_stationary} below: as seen in its proof, the constants $\delta>0$ and $\varepsilon_0>0$ respectively in \eqref{eq:etat_small} and \eqref{eq:stab_equilibrium_eps0} can be made explicit in terms of $ \Phi$, $h$ and $\ell$. In particular, \eqref{eq:stab_equilibrium_eps0} implies that $ \Vert \lambda \Vert_\infty$ can be controlled in an explicit way w.r.t. the parameters of the model.
        \item Consider the framework of Theorem~\ref{th:conv_empty} below: as seen in its proof, we have $\lambda_t\leq \ell$ for all $t\geq 0$, so that $\varepsilon_0$ can be again made explicit in terms of $ \Phi$, $h$ and $\ell$.
    \end{enumerate}
Both $C$ and $\sigma(t_0)$ in \eqref{eq:decay_Lambda} depend in an explicit way on the given parameters $A,B,a,b,S$ as well as on $h, \Phi$ and $\tau$. We refer to Equations \eqref{eq:rate_exp_exp}, \eqref{eq:rate_exp_pol}, \eqref{eq:rate_exp_comp}, \eqref{eq:rate_pol_exp}, \eqref{eq:rate_pol_pol1}, \eqref{eq:rate_pol_pol2} and \eqref{eq:rate_pol_comp} where we give explicit bounds on $C$ and $\sigma$. The constant $\tau= \tau_{\varepsilon_0}$ in \eqref{eq:rate_tauT} enters explicitly in the convergence rates found in \eqref{eq:rates_lambda_exp} and \eqref{eq:rates_lambda_S}, through the term $\log(1/\tau)$. As expected, the faster rate is reached as $\varepsilon_0\to 0$ with $\tau_{\epsilon_0} \xrightarrow[\varepsilon_0\to0]{} \tau_0$, where
\begin{equation*}
    \tau_0:= \Vert h \Vert_1 \vert \Phi^{\prime}(\kappa \ell) \vert\in [0,1).
\end{equation*}
and this rate degenerates as  $\tau_0\to1$. Note also that $C$ remains bounded as $\tau \xrightarrow[\varepsilon_0\to 0]{} \tau_0$.

The time $t_0$ in \eqref{eq:prox_lambda_ell} depends in a possibly nontrivial way on the source term $\xi$ and $\varepsilon_0$. In particular, in case $\vert \lambda_0-\ell\vert > \varepsilon_0$, one has generically that $t_0(\varepsilon_0) \xrightarrow[\varepsilon_0\to0]{}+\infty$ and hence, $ \sigma(t_0)\xrightarrow[\varepsilon_0\to0]{}+\infty$.  
\end{remark} 
In the second main result of this section, we are interested in a certain class of source terms $\xi$ for which Theorem~\ref{th:conv_rate} is applicable \textit{i.e.,} for which \eqref{eq:prox_lambda_ell} is valid, (giving in this sense a partial positive answer to Items~(a) and~(b) of Question~\ref{qu:convergence}). This class consists of small perturbations of the equilibrium source term $\xi^{eq,\ell}$ given in \eqref{eq:xi_stationary}.

\begin{theorem}
\label{th:stab_stationary}
Suppose $\Phi$ is a twice differentiable Lipschitz function such that $\|\Phi''\|_{\infty}<\infty$, $h$ is integrable on $[0, +\infty)$, $ \xi$ is bounded such that $ \xi_{ t} \xrightarrow[ t\to\infty]{}0$. Let $ \ell$ a solution to \eqref{eq:fixed_point_lambda} satisfying the stability condition \eqref{eq:cond_stab_ell}. Rewrite the source term $ \xi$ as a perturbation of $ \xi^{ eq, \ell}$ given by \eqref{eq:xi_stationary} as follows
\begin{equation*}
\xi_{ t}= \xi_{ t}^{ eq, \ell} + \eta_{ t}
\end{equation*}
for some bounded $ \eta: [0, +\infty) \to \mathbb{ R}$ with $ \eta_{ t} \xrightarrow[ t\to \infty]{}0$. Then there exist $ \varepsilon_{ 0}>0$ such that for all $\varepsilon\in (0, \varepsilon_0)$, there exists $ \delta>0$ such that the following is true: if $ \eta$ satisfies
\begin{equation}
\label{eq:etat_small}
\sup_{ t\geq0} \left\vert \eta_{ t} \right\vert \leq \delta
\end{equation}
then 
\begin{equation}
\label{eq:stab_equilibrium_eps0}
\sup_{ t\geq0} \left\vert \lambda_{ t} - \ell\right\vert \leq \varepsilon.
\end{equation}
Consequently, $ \lambda_t \xrightarrow[t\to\infty]{}\ell$ and under the same cases~\ref{it:xi_exp_decay}, \ref{it:xi_pol_decay} and \ref{it:xi_bounded_support} for $ \xi$ and $H$ as in Theorem~\ref{th:conv_rate}, we have the following uniform-in-time equivalent of \eqref{eq:decay_Lambda}:  for any $ \tau\in ( \tau_{ 0}, 1)$, there exists some constant $C>0$ independent of $ \varepsilon$ such that for all $t\geq0$,
\begin{equation}
\label{eq:decay_Lambda_unif}
    |\lambda_{t}-\ell |
\le  \varepsilon \wedge C \Lambda(t; 0, \tau),
\end{equation}
where $ \Lambda(t; 0, \tau)$ is the same as in \eqref{eq:rates_lambda_exp}, \eqref{eq:rates_lambda_pol} and \eqref{eq:rates_lambda_S} where $t_{ 0}=0$.

\end{theorem}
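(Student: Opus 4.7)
The plan is to first establish the uniform smallness of $\lambda_{t}-\ell$ via a bootstrap argument, then to apply Theorem~\ref{th:conv_rate} with $t_{0}=0$ to inherit convergence and rates. Setting $u_{t}:=\lambda_{t}-\ell$ and using the identity $\xi_{t}^{eq,\ell}+\int_{0}^{t}h(t-s)\ell\,\rmd s=\kappa\ell$ (from \eqref{eq:xi_stationary}) together with the fixed-point relation $\ell=\Phi(\kappa\ell)$, equation \eqref{eq:conv_gen_lambda} with source $\xi=\xi^{eq,\ell}+\eta$ rewrites as
\[
u_{t} = \Phi(\kappa\ell + v_{t}) - \Phi(\kappa\ell),\qquad v_{t}:=\eta_{t}+\int_{0}^{t}h(t-s)u_{s}\,\rmd s.
\]
A second-order Taylor expansion of $\Phi$ around $\kappa\ell$ then yields $|u_{t}|\le|\Phi'(\kappa\ell)||v_{t}|+\tfrac{1}{2}\|\Phi''\|_{\infty}v_{t}^{2}$.

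The bootstrap goes as follows. Invoking the local subcritical condition \eqref{eq:cond_stab_ell}, fix $\varepsilon_{0}>0$ small enough that $|\Phi'(\kappa\ell)|\|h\|_{1}+\tfrac{1}{2}\|\Phi''\|_{\infty}\|h\|_{1}^{2}\varepsilon_{0}<1$ and, anticipating that $\|\lambda\|_{\infty}\le\ell+\varepsilon_{0}$ once the bound is established, that $\tau_{\varepsilon_{0}}<1$ in the sense of \eqref{eq:rate_tauT}. For each $\varepsilon\in(0,\varepsilon_{0})$, pick $\delta>0$ so small that
\[
|\Phi'(\kappa\ell)|(\delta+\|h\|_{1}\varepsilon)+\tfrac{1}{2}\|\Phi''\|_{\infty}(\delta+\|h\|_{1}\varepsilon)^{2}<\varepsilon;
\]
this is possible because the left-hand side tends to a quantity strictly less than $\varepsilon$ as $\delta\to 0$, by the choice of $\varepsilon_{0}$. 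Since $\lambda$ is continuous by Proposition~\ref{prop:WP_solC1} and $|u_{0}|=|\Phi(\kappa\ell+\eta_{0})-\Phi(\kappa\ell)|<\varepsilon$ for $\delta$ small, define $T^{*}:=\sup\{T\ge 0:|u_{t}|\le\varepsilon\text{ for every }t\in[0,T]\}>0$. If $T^{*}<+\infty$, continuity forces $|u_{T^{*}}|=\varepsilon$, while applying the Taylor bound at $t=T^{*}$ with $|v_{T^{*}}|\le\delta+\|h\|_{1}\varepsilon$ gives $|u_{T^{*}}|<\varepsilon$, a contradiction. Hence $T^{*}=+\infty$, which yields \eqref{eq:stab_equilibrium_eps0}.

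Given \eqref{eq:stab_equilibrium_eps0}, $\lambda$ is bounded, the proximity hypothesis \eqref{eq:prox_lambda_ell} holds trivially with $t_{0}=0$, and $\xi_{t}=\ell H_{t}+\eta_{t}$ vanishes at infinity. Under each of the three decay scenarios of Assumption~\ref{ass:decay}, Theorem~\ref{th:conv_rate} therefore applies to $\lambda$ and gives $|u_{t}|\le C\Lambda(t;0,\tau_{\varepsilon_{0}})$ for all $t\ge\sigma(0)=0$. Combining with the uniform bound produces \eqref{eq:decay_Lambda_unif}; any $\tau\in(\tau_{0},1)$ is reached by further shrinking $\varepsilon_{0}$ so that $\tau_{\varepsilon_{0}}\le\tau$, the rate $\Lambda$ being monotone in $\tau$ on $(\tau_{0},1)$. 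The main obstacle is the bootstrap calibration: $\varepsilon$ must be chosen small enough that the quadratic Taylor remainder does not destroy the contraction encoded by $|\Phi'(\kappa\ell)|\|h\|_{1}<1$, and $\delta$ must then be chosen small relative to $\varepsilon$ so that the forcing $\eta$ is absorbed with strict inequality. Once this is done, reduction to Theorem~\ref{th:conv_rate} is immediate.
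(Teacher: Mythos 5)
Your proof is correct and follows essentially the same route as the paper: a second-order Taylor expansion of $\Phi$ around $\kappa\ell$ combined with a first-exit-time bootstrap (choosing $\varepsilon_{0}$ so the quadratic remainder preserves the contraction of \eqref{eq:cond_stab_ell}, then $\delta$ small relative to $\varepsilon$) gives the uniform bound \eqref{eq:stab_equilibrium_eps0}, and the rates then follow by invoking Theorem~\ref{th:conv_rate} with $t_{0}=0$. The only differences are cosmetic: the paper tracks the contraction through intermediate constants $\tau_{1},\tau_{2}$ and briefly reruns the proof of Theorem~\ref{th:conv_rate} (observing that $\xi_{t}-\ell\int_{t}^{+\infty}h(u)\,{\rm d}u=\eta_{t}$ is already bounded by $\delta$) rather than using it as a black box, and note that $\sigma(0)>0$ in general, so obtaining \eqref{eq:decay_Lambda_unif} for all $t\geq0$ indeed requires combining the decay estimate with the uniform $\varepsilon$-bound and an enlarged constant, exactly as in your final combination step.
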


\subsection{Stability and instability of equilibrium solutions in the excitatory case}
\label{sec:excitatory}
We have shown in the previous section that the subcritical condition \eqref{eq:cond_stab_ell} induces local stability around constant equilibrium solutions $\ell$ given by \eqref{eq:fixed_point_lambda}. Let us stress again that Theorems~\ref{th:conv_rate} and ~\ref{th:stab_stationary} do not require anything on the sign of $h$: these results are also valid in the case where inhibition is present. The question now concerns the optimality of Condition \eqref{eq:cond_stab_ell} w.r.t. the stability around fixed-points $\ell$. Whereas addressing this issue seems to be difficult in the general case of signed $h$, we answer positively to this problem in the purely excitatory case: in this subsection we assume that $h\geq0$.

\subsubsection{Monotonicity}
The fact that $h$ is nonnegative allows to obtain monotonicity properties that will be useful in the rest of the paper:
\begin{proposition}
\label{prop:monotone}
Suppose that $ \Phi$ is  Lipschitz continuous, of class $C^{1}$ whose derivative is Lipschitz continuous. Suppose that $h$ is nonnegative, continuous bounded and integrable on $[0, +\infty)$. Suppose that $\xi$ is $C^{1}$ on $[0, +\infty)$. Let $ \lambda= \lambda^{ \xi}$ be the solution to \eqref{eq:conv_gen_lambda} with initial condition $ \xi$. Then, the following is true:
\begin{enumerate}[label=(\alph*)]
\item \label{it:monotone} Suppose that $ \Phi$ is nondecreasing and that 
\begin{equation}
\label{eq:rho_pos}
\xi'_t+h(t)\Phi(\xi_0)\geq 0, \ \text{(resp. $ \xi'_t+h(t)\Phi(\xi_0)\leq0$)}, \quad t\geq0,
\end{equation}
 Then, the solution $ \lambda$ is nondecreasing (resp. nonincreasing) on $[0, +\infty)$.
\item \label{it:lambda_strictly_increasing}Suppose that $ \Phi^{ \prime}$ is strictly positive on $ [0, +\infty)$ and that one of the following condition holds:
\begin{equation}
\label{eq:cond_h}
\text{ Either (i) }h(0)>0 \text{ or (ii) } h(t)>0 \text{ for all } t>0.
\end{equation}
Suppose also that in addition to \eqref{eq:rho_pos}, we have
\begin{equation}
\label{eq:cond_rho}
\xi'_t+h(t)\Phi(\xi_0)>0 \text{ (resp. $\xi'_t+h(t)\Phi(\xi_0)<0$)  on  $(0, \delta)$  for some $ \delta>0$}.
\end{equation}
Then, $ \lambda$ is strictly increasing (resp. strictly decreasing). 
\end{enumerate}
\end{proposition}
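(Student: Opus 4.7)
The plan is to differentiate the NRE~\eqref{eq:conv_gen_lambda} in order to reduce the monotonicity question for $\lambda$ to a positivity question for a linear Volterra integral equation satisfied by $\lambda'$. Under the regularity hypotheses, Proposition~\ref{prop:WP_solC1} ensures that $\lambda\in C^1([0,+\infty))$. Introducing $x_t := \xi_t + \int_0^t h(u)\lambda_{t-u}\,du$ (after the substitution $u=t-s$) and using $\lambda_0=\Phi(\xi_0)$, term-by-term differentiation (legitimate by continuity of $h$ and the $C^1$ regularity of $\xi$ and $\lambda$) gives
\begin{equation*}
x'_t = \rho_t + \int_0^t h(t-s)\lambda'_s\,ds, \qquad \rho_t := \xi'_t + h(t)\Phi(\xi_0),
\end{equation*}
and then, via $\lambda_t=\Phi(x_t)$ and the chain rule,
\begin{equation*}
\lambda'_t \;=\; \Phi'(x_t)\,\rho_t \;+\; \Phi'(x_t)\int_0^t h(t-s)\,\lambda'_s\,ds. \qquad(\star)
\end{equation*}

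For item~\ref{it:monotone}, the nondecreasing hypothesis on $\Phi$ and the sign assumption~\eqref{eq:rho_pos} make both the kernel $(t,s)\mapsto \Phi'(x_t)h(t-s)$ and the inhomogeneous term $f(t):=\Phi'(x_t)\rho_t$ in $(\star)$ nonnegative. Since $(\star)$ is a linear Volterra equation of the second kind, its unique continuous solution $\lambda'$ admits the Neumann-series representation $\lambda' = \sum_{n\geq 0} K^{n}f$ where $(K\psi)(t) = \Phi'(x_t)\int_0^t h(t-s)\psi(s)\,ds$; convergence holds on any bounded interval by the usual Volterra-type $T^n/n!$ estimate. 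As both $f$ and $K$ preserve nonnegativity, $\lambda'\geq 0$ on $[0,+\infty)$ and $\lambda$ is nondecreasing. The nonincreasing case follows by the symmetric sign reversal in~\eqref{eq:rho_pos}.

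For item~\ref{it:lambda_strictly_increasing}, \eqref{eq:cond_rho} together with $\Phi'>0$ yields $\lambda'_t \geq \Phi'(x_t)\rho_t >0$ on $(0,\delta)$. Set $T_*:=\inf\{t>0:\lambda'_t=0\}$; by the previous observation $T_*\geq \delta>0$, and item~\ref{it:monotone} together with the definition of $T_*$ gives $\lambda'_s>0$ for every $s\in(0,T_*)$. Assume for contradiction $T_*<+\infty$. Evaluating $(\star)$ at $T_*$ and using $\Phi'(x_{T_*})>0$ forces
\begin{equation*}
\rho_{T_*} \;+\; \int_0^{T_*} h(T_*-s)\lambda'_s\,ds \;=\; 0,
\end{equation*}
so both nonnegative terms vanish. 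Since $\lambda'_s>0$ on $(0,T_*)$, the vanishing integral imposes $h(T_*-s)=0$ for a.e.\ $s\in(0,T_*)$, hence by continuity $h\equiv 0$ on $[0,T_*]$. This contradicts~\eqref{eq:cond_h} in case~(i) (where $h(0)>0$) and in case~(ii) (where $h>0$ on all of $(0,+\infty)$). Therefore $T_*=+\infty$ and $\lambda$ is strictly increasing; the strictly decreasing case is symmetric.

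The main delicate point, in both items, will be to verify cleanly that $(\star)$ is meaningful and that the differentiation under the integral is legitimate (this is the only place the $C^1$ regularity and the continuity/boundedness of $h$ are really used); beyond this, the argument is a transparent sign analysis of the linear Volterra structure, with the strict monotonicity in item~\ref{it:lambda_strictly_increasing} hinging on the fact that the hypothesis~\eqref{eq:cond_h} precisely rules out the only possible vanishing scenario uncovered by the first-zero analysis.
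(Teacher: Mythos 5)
Your proof is correct, but it takes a genuinely different route from the paper's, in both items. For item~\ref{it:monotone} you work directly on the linear Volterra equation $(\star)$ — which is exactly identity \eqref{eq:derivative_lambda} of Proposition~\ref{prop:C1} — and deduce $\lambda'\geq 0$ from the positivity of the Neumann series $\sum_{n}K^{n}f$, using the factorial bound $\left\vert (K^{n}f)(t)\right\vert\leq M_T^{n}t^{n}/n!\,\|f\|_{\infty,[0,T]}$ (this is where the boundedness of $h$ and the continuity of $\Phi'(x_\cdot)$ enter) together with uniqueness of continuous solutions via Gr\"onwall. The paper proceeds in the opposite order: it proves item~\ref{it:lambda_strictly_increasing} first (a contraction estimate for the operator $\Theta$ on a small interval, then an extension based on the strict positivity of the resolvent $\Upsilon=\sum_{k}\tilde h^{\ast k}$ under \eqref{eq:cond_h}), and then obtains item~\ref{it:monotone} by perturbing $(\Phi,h,\xi)$ into a strictly monotone setting as in \eqref{eq:rhoeps_Phieps} and passing to the limit with Lemma~\ref{lem:Gronwall_eps}; your direct series argument makes this mollification/limiting step unnecessary. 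For item~\ref{it:lambda_strictly_increasing}, your first-zero analysis is also different: at a hypothetical first zero $T_*\geq\delta$ you force $\rho_{T_*}=0$ and $\int_0^{T_*}h(T_*-s)\lambda'_s\,{\rm d}s=0$, hence $h\equiv 0$ on $[0,T_*]$, which contradicts \eqref{eq:cond_h} in both cases, whereas the paper instead lower-bounds $\lambda'$ by $\varepsilon_0(\Upsilon\ast\rho)$ and shows $\Upsilon>0$ on $(0,T]$. Your use of item~\ref{it:monotone} to discard the integral term (so that $\lambda'_t\geq\Phi'(x_t)\rho_t>0$ on $(0,\delta)$, and the integral at $T_*$ is nonnegative) is legitimate, since \eqref{eq:rho_pos} is assumed in item~\ref{it:lambda_strictly_increasing} and $\Phi'>0$ implies $\Phi$ nondecreasing. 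The only points worth stating explicitly are that $\lambda'_{T_*}=0$ by continuity of $\lambda'$ (so that evaluating $(\star)$ at $T_*$ indeed gives the vanishing of the bracket), and that the $C^1$ regularity of $\lambda$ justifying the differentiation is supplied by Proposition~\ref{prop:WP_solC1}; both are exactly as you indicate, so the argument stands as a shorter, more elementary alternative to the paper's proof.
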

\begin{remark}
\label{rem:monotone_empty_case} 
\begin{enumerate}[label=(\alph*)]
\item Condition \eqref{eq:rho_pos} was already considered as a sufficient condition for monotonicity in the linear case where $ \Phi(x)= \mu + x$ in several works: see \textit{e.g.,} Example, p.~258 in \cite{feller1941} and in \cite{Dermitzakis2022}. The fact that we are dealing with a nonlinear kernel raises some significant technical difficulties. The present result is similar in spirit with Th.~2.1 of \cite{Constantin2014}, although their assumptions are different from ours (\textit{e.g.,} it is supposed in \cite{Constantin2014} that $h$ in non increasing and Item~\ref{it:monotone} of Proposition~\ref{prop:monotone} is not addressed in \cite{Constantin2014}).
\item Conditions (i) and (ii) of hypothesis \eqref{eq:cond_h} capture non excluding sets of interesting examples: Condition (i) includes the case of compactly supported $h$ that are non zero in $0$. Condition (ii) includes any Erlang kernels $h_{ n}$ for all $n\geq0$ (see \eqref{eq:Erlang}, note that $h_{ n}(0)=0$ for $n\geq1$ so that Condition (i) is not met). 
\item For any $ \ell_{ 0}\geq0$, the following source term
\begin{equation}
\label{eq:xi_mono}
\xi_t=\xi^{\ell_0}_{ t} = \ell_0 \int_{ t}^{+\infty} h(u) {\rm d}u
\end{equation}
leads to
\begin{equation}
\label{eq:rho_particular_case}
\xi'_t+h(t)\Phi(\xi_0)= \left(\Phi \left(\Vert h\Vert_1\ell_{ 0}\right) - \ell_0\right)h(t),\ t\geq0.
\end{equation} 
Thus, $\xi^{\ell_0}$ in \eqref{eq:xi_mono} boils down to the equilibrium source term $ \xi^{ eq, \ell}$ in \eqref{eq:xi_stationary} if and only if $\ell_{ 0}$ solves the fixed-point relation \eqref{eq:fixed_point_lambda}. Hence, unless $ \ell_0$ solves \eqref{eq:fixed_point_lambda}, under \eqref{eq:cond_h}, the solution $ \lambda^{ \xi}$ is strictly monotone. This is in particular true in the case of the empty source term $\xi^{ \emptyset}$ in \eqref{eq:empty_xi} (take $ \ell_{ 0}=0$ in \eqref{eq:xi_mono}) where $ \lambda$ is strictly increasing (at least when $\Phi(0)>0$). The derivation of Condition \eqref{eq:cond_rho} from \eqref{eq:rho_particular_case} (which is trivial if $h(0)>0$ by continuity) also holds in the second case $(ii)$ of \eqref{eq:cond_h}. This observation is crucial in the case of Erlang kernels (see Definition~\ref{def:Erlang}) as $h_n(0)=0$ for $n\geq 1$ so that $ \xi'_0+h(0)\Phi(\xi_0)=0$. Remark that Condition \eqref{eq:cond_rho} has been specifically designed not to require anything on the value of $\xi'_0+h(0)\Phi(\xi_0)$.
\end{enumerate}
\end{remark}

\subsubsection{Sufficient conditions for local stability/instability}
When $h$ is nonnegative, the fixed-point relation \eqref{eq:fixed_point_lambda} becomes
\begin{equation}
\label{eq:fixed_point_lambda_pos}
    \ell = \Phi\left(\Vert h \Vert_1 \ell\right)
\end{equation} 
and the subcritical condition \eqref{eq:cond_stab_ell} becomes
\begin{equation}
\label{eq:cond_stab_ell_pos}
    \Vert h \Vert_1 \vert\Phi^{\prime} \left(\Vert h\Vert_1 \ell\right)\vert <1.
\end{equation}
We will prove that \eqref{eq:cond_stab_ell_pos} is indeed optimal w.r.t. the local stability of the equilibrium $\ell$, by addressing the subcritical case in \S~\ref{sec:supercritical} and the critical case in \S~\ref{sec:critical_case}. To do so, for any fixed-point $\ell$ solution to \eqref{eq:fixed_point_lambda}, we begin with the identification of a generic class of source terms $\xi$ that are perturbations of $\xi^{eq, \ell}$ given in \eqref{eq:xi_stationary} for which we can easily address the stability (see \eqref{eq:cond_Phi_stable1} and \eqref{eq:cond_Phi_stable2}) or instability (see \eqref{eq:cond_Phi_unstable1} and \eqref{eq:cond_Phi_unstable2}) of the corresponding solution $ \lambda^\xi$ to \eqref{eq:conv_gen_lambda}.

\begin{proposition}
\label{prop:generic_unstable_stable}
Suppose that $ \Phi$ is  Lipschitz continuous strictly increasing, of class $C^{1}$ with $\Phi^\prime$ Lipschitz continuous. Suppose that $h$ is nonnegative, bounded continuous and integrable on $[0, +\infty)$. Define the following class of perturbations of the equilibrium source term $ \xi^{ eq, \ell}$: let $ \chi:[0, +\infty)\to [0, +\infty]$ be a $ \mathcal{ C}^{ 1}$ strictly decreasing function such that $ \chi_{ 0}={\|h\|_1}$ and $ \chi_{ t} \xrightarrow[ t\to\infty]{}0$. For any given $ \ell_{ 0}\geq0$, define
\begin{equation}
\label{eq:xi_diverges}
\xi_{ t}^{\chi, \ell_{ 0}}:= \Phi({\|h\|_1}\ell_{ 0}) \int_{ t}^{ +\infty} h(u) {\rm d}u + \left( \ell_{ 0}- \Phi(\|h\|_1\ell_{ 0})\right)\chi_{ t},\ t\geq0.
\end{equation}
Consider the solution $ \lambda:= \lambda^{\chi, \ell_{ 0}}$ to \eqref{eq:conv_gen_lambda} with source term $ \xi^{\chi, \ell_{ 0}}$ given by \eqref{eq:xi_diverges}. Let $\ell$  be some solution to the equation  $ \ell = \Phi(\|h\|_1\ell)$. Then, the following is true:
\begin{enumerate}[label=(\alph*)]
    \item $ \xi^{\chi, \ell_{ 0}}$ in \eqref{eq:xi_diverges} is a uniform approximation of the equilibrium source term $ \xi^{eq, \ell}$ given by \eqref{eq:xi_stationary} for $\ell_{ 0}$ close to $\ell$:
\begin{equation*}
\sup_{ t\geq0} \left\vert \xi^{\chi, \ell_{ 0}}_{ t} - \xi^{ eq, \ell}_{ t} \right\vert \to 0,\ \text{ as }\ell_{ 0} \to \ell.
\end{equation*}  
\item Sufficient conditions for local instability: suppose that there exists some $ \delta>0$ such that one of the following holds
\begin{align}
&\text{for all $ \ell_{ 0}\in \left(\ell, \ell+ \delta\right)$, }\Phi(\|h\|_{1}\ell_{ 0})> \ell_{ 0} 
\label{eq:cond_Phi_unstable1}\\
\text{or }&\text{for all $ \ell_{ 0}\in \left(\ell- \delta, \ell\right)$, } \Phi(\|h\|_{1}\ell_{ 0})< \ell_{ 0} 
.\label{eq:cond_Phi_unstable2}
\end{align}
Then, if \eqref{eq:cond_Phi_unstable1} (resp. \eqref{eq:cond_Phi_unstable2}) holds, for all $\ell_{ 0}\in (\ell, \ell+\delta)$ (resp. $\ell_{ 0}\in (\ell-\delta, \ell)$), the solution $ \lambda^{\chi, \ell_{ 0}}$  is such that 
\begin{equation*}
\liminf_{ t\to\infty} \left\vert \lambda^{\chi, \ell_{ 0}}_{ t} - \ell\right\vert>0.
\end{equation*}
\item Sufficient conditions for local stability: suppose that there exists some $ \delta>0$ such that one of the following holds
\begin{align}
&\text{for all $ \ell_{ 0}\in \left(\ell, \ell+ \delta\right)$, }\Phi(\|h\|_{1}\ell_{ 0})< \ell_{ 0} 
\label{eq:cond_Phi_stable1}\\
\text{or }&\text{for all $ \ell_{ 0}\in \left(\ell- \delta, \ell\right)$, } \Phi(\|h\|_{1}\ell_{ 0})> \ell_{ 0} 
.\label{eq:cond_Phi_stable2}
\end{align}
Then, if \eqref{eq:cond_Phi_stable1} is satisfied, for any $\ell_0\in (\ell, \ell+ \delta)$, for all $t\geq0$, $ \lambda^{\chi,\ell_0}_{ t}\geq \ell$ and $ \lambda^{\chi,\ell_0}_{ t} \xrightarrow[ t\to \infty]{}\ell$, and if \eqref{eq:cond_Phi_stable2} is satisfied, for any $\ell_0\in (\ell-\delta, \ell)$, for all $t\geq0$, $ \lambda^{\chi, \ell_0}_{ t}\leq \ell$ and $ \lambda^{\chi,\ell_0}_{ t} \xrightarrow[ t\to \infty]{}\ell$.
\end{enumerate}
\end{proposition}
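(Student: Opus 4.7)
The plan hinges on a single algebraic identity that is the whole point of the specific form of \eqref{eq:xi_diverges}. Setting $\xi = \xi^{\chi, \ell_0}$, a direct computation gives $\xi_0 = \ell_0 \|h\|_1$, hence $\lambda_0 = \Phi(\ell_0 \|h\|_1) = \Phi(\|h\|_1\ell_0)$, and the terms involving $h(t)\Phi(\|h\|_1\ell_0)$ cancel to yield
\begin{equation*}
\xi'_t + h(t)\Phi(\xi_0) = \bigl(\ell_0 - \Phi(\|h\|_1\ell_0)\bigr)\,\chi'_t, \qquad t \geq 0.
\end{equation*}
Since $\chi$ is strictly decreasing, the sign of the right-hand side is opposite to that of $\ell_0 - \Phi(\|h\|_1\ell_0)$, which is exactly the sign that controls both \eqref{eq:cond_Phi_unstable1}--\eqref{eq:cond_Phi_unstable2} and \eqref{eq:cond_Phi_stable1}--\eqref{eq:cond_Phi_stable2}. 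Part~(a) follows from writing
\begin{equation*}
\xi^{\chi,\ell_0}_t - \xi^{eq,\ell}_t = \bigl(\Phi(\|h\|_1\ell_0) - \ell\bigr)\int_t^{+\infty} h(u)\,{\rm d}u + \bigl(\ell_0 - \Phi(\|h\|_1\ell_0)\bigr)\chi_t,
\end{equation*}
and bounding uniformly in $t$ by $\bigl(|\Phi(\|h\|_1\ell_0) - \ell| + |\ell_0 - \Phi(\|h\|_1\ell_0)|\bigr)\|h\|_1$; both factors vanish as $\ell_0\to\ell$ by continuity of $\Phi$ and the fixed-point relation $\Phi(\|h\|_1\ell)=\ell$.

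For part~(b) under \eqref{eq:cond_Phi_unstable1}, one has $\ell_0 - \Phi(\|h\|_1\ell_0) < 0$ for $\ell_0\in(\ell, \ell+\delta)$, so the identity above has positive sign and Proposition~\ref{prop:monotone}\ref{it:monotone} forces $\lambda^{\chi,\ell_0}$ to be nondecreasing. Combined with $\lambda_0 = \Phi(\|h\|_1\ell_0) > \ell_0 > \ell$, this gives $\lambda^{\chi,\ell_0}_t \geq \Phi(\|h\|_1\ell_0) > \ell$ for every $t\geq 0$, hence $\liminf_{t\to\infty}|\lambda^{\chi,\ell_0}_t - \ell| \geq \Phi(\|h\|_1\ell_0) - \ell > 0$. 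The case \eqref{eq:cond_Phi_unstable2} is perfectly symmetric: $\lambda^{\chi,\ell_0}$ is then nonincreasing with $\lambda_0 = \Phi(\|h\|_1\ell_0) < \ell_0 < \ell$.

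For part~(c), assume \eqref{eq:cond_Phi_stable1} (the other case being symmetric). Now $\ell_0 - \Phi(\|h\|_1\ell_0) > 0$, so Proposition~\ref{prop:monotone} gives $\lambda^{\chi,\ell_0}$ nonincreasing; strict monotonicity of $\Phi$ and $\ell_0 > \ell$ yield $\lambda_0 = \Phi(\|h\|_1\ell_0) > \Phi(\|h\|_1\ell) = \ell$. The key step is the comparison $\lambda^{\chi,\ell_0}_t \geq \ell$ for all $t\geq 0$, which I would establish by contradiction: assume $t^* := \inf\{t\geq 0 : \lambda^{\chi,\ell_0}_t < \ell\} < \infty$, so by continuity $\lambda^{\chi,\ell_0}_{t^*} = \ell$ and $\lambda^{\chi,\ell_0}_s \geq \ell$ on $[0, t^*]$. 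Since the constant solution $\ell$ corresponds to the source term $\xi^{eq,\ell}$ by Proposition~\ref{prop:equilibrium}, injectivity of $\Phi$ applied to $\Phi^{-1}(\lambda^{\chi,\ell_0}_{t^*}) = \Phi^{-1}(\ell)$ yields
\begin{equation*}
\bigl(\xi^{\chi,\ell_0}_{t^*} - \xi^{eq,\ell}_{t^*}\bigr) + \int_0^{t^*} h(t^*-s)\bigl(\lambda^{\chi,\ell_0}_s - \ell\bigr)\,{\rm d}s = 0.
\end{equation*}
The integral is nonnegative since $h\geq 0$ and $\lambda^{\chi,\ell_0}_s - \ell \geq 0$ on $[0, t^*]$, while the source-term difference decomposes as in paragraph 1 with the second summand $(\ell_0 - \Phi(\|h\|_1\ell_0))\chi_{t^*}$ strictly positive (because $\chi$ is strictly decreasing from $\|h\|_1$ to $0$, hence $\chi_{t^*}>0$ at finite $t^*$). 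This forces the sum to be strictly positive, a contradiction. With $\lambda^{\chi,\ell_0}$ nonincreasing and bounded below by $\ell$, it converges to some $\ell^* \in [\ell, \Phi(\|h\|_1\ell_0)] \subset [\ell, \ell+\delta)$; by Proposition~\ref{prop:conv_X_lambda}, $\ell^* = \Phi(\|h\|_1\ell^*)$, and \eqref{eq:cond_Phi_stable1} (taken together with $\Phi(\|h\|_1\ell)=\ell$) precludes any such fixed-point other than $\ell$ itself in $[\ell, \ell+\delta)$, so $\ell^*=\ell$. The main obstacle is precisely the comparison step: the strict positivity of $\chi_{t^*}$ is the essential ingredient and clarifies why the perturbation $\chi$ was introduced in \eqref{eq:xi_diverges} rather than directly working with the equilibrium source term $\xi^{eq,\ell}$.
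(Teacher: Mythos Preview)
Your proof is correct and follows essentially the same route as the paper's: the key algebraic identity $\xi'_t + h(t)\Phi(\xi_0) = (\ell_0 - \Phi(\|h\|_1\ell_0))\chi'_t$, the appeal to Proposition~\ref{prop:monotone} for monotonicity, and the contradiction argument via $t^*$ in part~(c) are all identical to the paper's approach. One very minor omission: in the contradiction step of part~(c) you should state explicitly that the \emph{first} summand $(\Phi(\|h\|_1\ell_0)-\ell)\int_{t^*}^{\infty}h(u)\,{\rm d}u$ in the source-term difference is also nonnegative (it is, since $\Phi(\|h\|_1\ell_0)>\ell$ by the strict monotonicity you already invoked), so that the strict positivity of the $\chi_{t^*}$-term indeed forces the whole sum to be strictly positive.
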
 Proof of Proposition \ref{prop:generic_unstable_stable} is given in Section \ref{sec:proof_prop_generic}.

\begin{remark}
\label{rem:comments_stab_instab}
\begin{enumerate}[label=(\alph*)]
\item Taking $\chi_t=\int_t^\infty h(u)\rmd u$ in \eqref{eq:xi_diverges}, we see that $\xi^{\chi, \ell_0}$ is equal to $\xi^{\ell_0}_t= \ell_0 \int_{ t}^{+\infty} h(u) {\rm d}u$ given by \eqref{eq:xi_mono}. 
\item Under the subcritical condition \eqref{eq:cond_stab_ell_pos}, Conditions \eqref{eq:cond_Phi_stable1} and \eqref{eq:cond_Phi_stable2} are trivially satisfied. This is of course compatible with the results of Theorems~\ref{th:conv_rate} and~\ref{th:stab_stationary}. Proposition~\ref{prop:generic_unstable_stable}
states here a weaker notion of stability, as it is restricted to the class of perturbations given by \eqref{eq:xi_diverges}. Interestingly, Proposition \ref{prop:generic_unstable_stable} allows  to consider the critical case $\|h\|_1\Phi'(\|h\|_1\ell)=1$: in this case, one will need to look at the sign of  the first higher order derivative $\Phi^{ (p)}(\|h\|_1\ell)$ that is nonzero (together with the parity $p$) to obtain sufficient conditions for \eqref{eq:cond_Phi_stable1} (stability starting from the right) or \eqref{eq:cond_Phi_stable2} (stability starting from the left). This will be the purpose of Section~\ref{sec:critical_case}.
\item \label{it:unstab_comment} Conversely, in the supercritical case $\|h\|_1\Phi'(\|h\|_1\ell)>1$, both Conditions \eqref{eq:cond_Phi_unstable1} (instability from the right) and \eqref{eq:cond_Phi_unstable2} (instability from the left) are satisfied. This is in itself sufficient to show the optimality of the Condition \eqref{eq:cond_stab_ell_pos} for the stability of $\ell$ (at least in the case where $h\geq 0$). However, in the supercritical regime, we are able to show instability for a wider class of perturbations, see \S~\ref{sec:supercritical}.
\end{enumerate}
\end{remark}

\subsubsection{Supercritical fixed-points}
\label{sec:supercritical}
We suppose in this paragraph that the fixed-point $\ell= \Phi(\Vert h\Vert_1\ell)$ satisfies now the supercritical condition
\begin{equation}
\label{eq:Phi_supercritical}
\Vert h\Vert_1\Phi^{ \prime}(\Vert h\Vert_1\ell)>1.
\end{equation}
The claim is that Condition \eqref{eq:Phi_supercritical} induces instability for the dynamics \eqref{eq:conv_gen_lambda} starting around $ \xi^{ eq,\ell}$. Recall here Item~\ref{it:unstab_comment} of Remark~\ref{rem:comments_stab_instab}: $\ell$ is then unstable from above and below in the sense of Proposition~\ref{prop:generic_unstable_stable}. The main result of this section extends the regime of instability to both generic positive and negative nontrivial perturbations of $ \xi^{ eq, \ell}$. This however requires some further (but natural) assumptions on $ \Phi$ and $h$.
\begin{theorem}
\label{th:supercritical_unstable2}
Suppose that $ \Phi$ is  Lipschitz continuous strictly increasing, of class $C^{1}$ whose derivative is Lipschitz continuous. Let $\ell$ be some fixed-point solution to \eqref{eq:fixed_point_lambda_pos} such that the supercritical condition \eqref{eq:Phi_supercritical} is true. Suppose that  $ \Psi: y \mapsto \Phi \left(\Vert h\Vert_1 y\right)$ is convex-concave around $\ell$, that is convex on $(\ell- \delta, \ell]$ and concave on $[\ell,\ell+ \delta)$ for some $ \delta>0$. Suppose that $h$ is strictly positive, bounded continuous and integrable on $[0, +\infty)$. Write the source term as $ \xi_{ t}= \xi_{ t}^{ eq, \ell} + \eta_{ t}$ with $\eta$ bounded and continuous on $[0, +\infty)$ with $ \eta_{ t} \xrightarrow[ t\to\infty]{}0$. Suppose that one of the following condition holds:
\begin{enumerate}[label=(\alph*)]
\item[(a)] Instability from above: suppose that $ \eta_{ t}\geq0$ for all $t\geq0$ and $ \eta_{ 0}>0$.
\item[(b)] Instability from below: suppose that $ \eta_{ t}\leq0$ for all $t\geq0$ and $ \eta_{ 0}<0$.
\end{enumerate}
Then, in each cases (a) and (b), the solution $ \lambda$ of \eqref{eq:conv_gen_lambda} with source term $ \xi$ does not converge to $\ell$ as $t\to\infty$.
\end{theorem}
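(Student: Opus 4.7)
I focus on case (a); case (b) follows by the symmetric argument applied to $\ell-\lambda_t$. Writing $v_t:=\lambda_t-\ell$ and using $\ell=\Phi(\Vert h\Vert_1\ell)$ together with $\xi_t^{eq,\ell}+\int_0^t h(t-s)\ell\,\rmd s=\Vert h\Vert_1\ell$, equation \eqref{eq:conv_gen_lambda} rewrites as
\begin{equation*}
v_t=\Phi\bigl(\Vert h\Vert_1\ell+u_t\bigr)-\Phi(\Vert h\Vert_1\ell),\qquad u_t:=\eta_t+\int_0^t h(t-s)v_s\,\rmd s.
\end{equation*}
A Picard iteration starting from $v^{(0)}\equiv 0$, using $\eta\ge 0$, $h\ge 0$ and the monotonicity of $\Phi$, yields $v_t\ge 0$, which I strengthen to $v_t>0$ for all $t\ge 0$: should $v_{t^\star}=0$ at some $t^\star>0$, strict monotonicity of $\Phi$ would force $u_{t^\star}=0$, and this, combined with $h>0$, $\eta\ge 0$ and continuity of $v$, would force $v\equiv 0$ on $[0,t^\star]$, contradicting $v_0=\Phi(\Vert h\Vert_1\ell+\eta_0)-\ell>0$.

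\textbf{Renewal inequality via linearisation.} Assume for contradiction that $v_t\to 0$. Then $v$ is bounded, and dominated convergence with $h\in L^1$ gives $\int_0^t h(t-s)v_s\,\rmd s\to 0$, hence $u_t\to 0$. Using $\Vert\Phi''\Vert_\infty<\infty$, Taylor's formula yields
\begin{equation*}
v_t\;\ge\;p\,u_t\;-\;\tfrac12\Vert\Phi''\Vert_\infty u_t^2,\qquad p:=\Phi'(\Vert h\Vert_1\ell).
\end{equation*}
The supercritical condition \eqref{eq:Phi_supercritical} allows me to fix $\varepsilon>0$ small enough that $\kappa:=p-\varepsilon$ satisfies $\kappa\Vert h\Vert_1>1$; since $u_t\to 0$, there exists $T_0\ge 0$ with $v_t\ge\kappa u_t$ for every $t\ge T_0$. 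Dropping the nonnegative contribution of $\eta_t$ then produces the one-sided renewal inequality
\begin{equation*}
v_t\;\ge\;\kappa\int_0^t h(t-s)v_s\,\rmd s,\qquad t\ge T_0.
\end{equation*}

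\textbf{Contradiction via a truncated Laplace transform.} Set $\tilde v_t:=v_t\mathbf{1}_{\{t\ge T_0\}}$. For $t<T_0$ both sides of the inequality below vanish, and for $t\ge T_0$, $\kappa\int_0^t h(t-s)\tilde v_s\,\rmd s=\kappa\int_{T_0}^t h(t-s)v_s\,\rmd s\le\kappa\int_0^t h(t-s)v_s\,\rmd s\le v_t=\tilde v_t$; hence $\tilde v\ge\kappa(h\ast\tilde v)$ on the whole of $[0,+\infty)$. Because $v$ is bounded, $\widehat{\tilde v}(z):=\int_{T_0}^{+\infty}e^{-zt}v_t\,\rmd t$ is finite for every $z>0$ and strictly positive, since $v>0$ continuously on $[T_0,+\infty)$. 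Taking Laplace transforms yields $\widehat{\tilde v}(z)\bigl[1-\kappa\hat h(z)\bigr]\ge 0$ for every $z>0$. Now $h>0$ and $h\in L^1$ imply that $z\mapsto\kappa\hat h(z)$ is continuous and strictly decreasing on $(0,+\infty)$ with $\kappa\hat h(0^+)=\kappa\Vert h\Vert_1>1$ and $\kappa\hat h(z)\to 0$ as $z\to+\infty$; hence there is a unique $z_\ast>0$ with $\kappa\hat h(z_\ast)=1$ and $1-\kappa\hat h(z)<0$ for every $z\in(0,z_\ast)$. For any such $z$ the product $\widehat{\tilde v}(z)\bigl[1-\kappa\hat h(z)\bigr]$ is strictly negative, a contradiction.

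\textbf{Main obstacle.} The delicate step is turning the local inequality, valid only beyond the linearisation time $T_0$, into a renewal-type inequality on the whole of $[0,+\infty)$ amenable to Laplace analysis; the truncation $\tilde v=v\mathbf{1}_{\{t\ge T_0\}}$ resolves this thanks to the sign structure $v\ge 0$, $h\ge 0$. The strict positivity of $h$ is used crucially both to secure $v>0$ everywhere and to ensure $\kappa\hat h$ strictly crosses the level $1$ on $(0,+\infty)$, while the convex-concave structure of $\Psi$ around $\ell$ guarantees that the Taylor lower bound retains the correct sign on a full one-sided neighbourhood of $\ell$, which is what makes the linearisation tight enough to carry the supercritical instability.
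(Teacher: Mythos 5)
Your proof is correct, but it departs from the paper's argument in two substantive ways. The overall skeleton is the same (show $\lambda_t\geq \ell$ using $\eta\geq0$, $\eta_0>0$, $h>0$ and monotonicity of $\Phi$; assume $\lambda_t\to\ell$; linearize to obtain a renewal-type inequality $v_t\geq \kappa\int_0^t h(t-s)v_s\,\rmd s$ with $\kappa\Vert h\Vert_1>1$; derive a contradiction). However, (i) where the paper exploits the concavity of $\Psi$ on $[\ell,\ell+\delta)$ to write $\Psi(y_t)-\Psi(\ell)\geq \Psi'(y_t)(y_t-\ell)>\tau(y_t-\ell)$, you use a quantitative Taylor bound based on the Lipschitz continuity of $\Phi'$ together with $u_t\geq0$ and $u_t\to0$; your closing remark notwithstanding, your argument never actually uses the convex--concave hypothesis, so your route shows it is dispensable under the stated $C^1$-with-Lipschitz-derivative regularity (one cosmetic slip: the theorem does not assume $\Vert\Phi''\Vert_\infty<\infty$, so the constant in your Taylor remainder should be $\vert\Phi'\vert_{Lip}$, which serves the same purpose). (ii) Where the paper rewrites $\nu_t=R(t)+\int_0^{t-t_0}\tilde h(t-s)\nu_s\,\rmd s$ and invokes Feller's renewal theorem (Theorem 3(iii) of \cite{feller1941}) to conclude $\nu_t\to+\infty$, you truncate ($\tilde v=v\mathbf{1}_{t\geq T_0}$), take a Laplace transform, and contradict $\kappa\hat h(z)>1$ for small $z>0$; this is a self-contained, elementary alternative that avoids the external renewal theorem, at the price of needing the strict positivity $v>0$ on $[T_0,\infty)$ (which your injectivity/continuity argument, or the paper's first-crossing argument, supplies). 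Both proofs are sound; the paper's buys a structural interpretation via Brauer--Feller theory, yours buys self-containedness and a slightly weaker hypothesis set.
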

Proof of Theorem \ref{th:supercritical_unstable2} is given in Section \ref{sec:proof_th_critical_unstable2}.

\begin{remark}
The proof of Theorem~\ref{th:supercritical_unstable2} relies on similar arguments as in Theorem~4 of Brauer \cite{Brauer1975}. The purpose of \cite[Th.~4]{Brauer1975} was to address the case where $\ell=0$ with $\Phi(0)=0$ and $\Phi^\prime(0) \Vert h \Vert_1>1$. One can see Theorem~\ref{th:supercritical_unstable2} as a generalization of the result of Brauer to all fixed-points $\ell$. In particular, it is written in \cite{Brauer1975}, page~317: \emph{"It is natural to conjecture that a condition like that of Theorem~4 should be necessary for nonzero limits of solutions of \eqref{eq:conv_gen_lambda}. However, the proof depends strongly on the fact that the approach to the limit zero is necessarily from above. For a nonzero limit, the solution may oscillate about the limit ans it does not appear that any general result of this nature can be true."} We claim that the hypothesis of convexity-concavity of Theorem~\ref{th:supercritical_unstable2} allows to transport the argument of Brauer to every positive fixed-point $\ell$. The existence of damped oscillatory solutions that converge towards some fixed-point is also mentioned in the seminal article of Feller \cite{feller1941}, in the linear case. We confirm this observation in \S~\ref{sec:stable_manifold_Erlang} with an explicit solution with damped oscillations converging towards some supercritical point in the case of Erlang kernels.
\end{remark}

\subsubsection{The critical case}
\label{sec:critical_case}
Assume in this paragraph that the fixed-point $\ell$ solution to \eqref{eq:fixed_point_lambda_pos} is locally critical, \textit{i.e.,}
\begin{equation}
\label{eq:local_crit}
\Vert h\Vert_1\Phi^{ \prime} \left(\Vert h\Vert_1\ell\right)=1.
\end{equation}
In this case, the stability around the equilibrium solution $ \xi^{ eq, \ell}$ depends on higher derivatives of $ \Phi$ in $\Vert h\Vert_1\ell$. 
\begin{assumption}
\label{ass:crit_p}
Suppose that $ \Phi$ is of class $ \mathcal{ C}^{ \infty}$ in a neighborhood of $\|h\|_1\ell$ such that the sequence of derivatives $ \left(\Phi^{ (k)}(\Vert h\Vert_1\ell)\right)_{ k\geq2}$ is not equally zero: denote as 
\begin{equation}
\label{eq:p}
p:= \min \left\lbrace k\geq2,\  \Phi^{ (k)}(\Vert h\Vert_1\ell) \neq 0\right\rbrace.
\end{equation}
\end{assumption}
Under the latter assumption, Proposition~\ref{prop:generic_unstable_stable} applies: a simple Taylor expansion allows to derive directly the sign of $\Phi \left(\Vert h\Vert_1 \ell_0\right)-\ell_0$ in a neighborhood of $\ell$ from the sign of $\Phi^{ (p)}(\Vert h\Vert_1\ell)$ and the parity of $p$. Table~\ref{tab:stability_p} summarizes the corresponding results.
\begin{table}[ht]
  \centering
  \begin{tabular}{cll}
    \toprule
     & \multicolumn{1}{c}{$ \Phi^{ (p)}(\Vert h\Vert_1\ell)>0$} & \multicolumn{1}{c}{$ \Phi^{ (p)}(\Vert h\Vert_1\ell)<0$} \\[.5\normalbaselineskip]
    \midrule \\
    $p$ even & Instability from above (see \eqref{eq:cond_Phi_unstable1})& Stability from above (see \eqref{eq:cond_Phi_stable1})\\
    & Stability from below (see \eqref{eq:cond_Phi_stable2})& Instability from below (see \eqref{eq:cond_Phi_unstable2})\\[.5\normalbaselineskip]
    \midrule \\
     $p$ odd&  Instability from above (see \eqref{eq:cond_Phi_unstable1})& Stability from above (see \eqref{eq:cond_Phi_stable1})\\
    & Instability from below (see \eqref{eq:cond_Phi_unstable2})& Stability from below (see \eqref{eq:cond_Phi_stable2})\\[.5\normalbaselineskip]
    \bottomrule\\
  \end{tabular}
  \caption{Local stability around the equilibrium source term $ \xi^{ eq, \ell}$ under Assumption~\ref{ass:crit_p} depending on the parity of $p$ given by \eqref{eq:p} and the sign of $ \Phi^{ (p)}(\Vert h\Vert_1\ell)$. Stability and instability are to be understood in the context of the results of Proposition~\ref{prop:generic_unstable_stable} (we assume here that the hypotheses of this Proposition are satisfied).}
\label{tab:stability_p}
\end{table}
\begin{remark}
It is also possible to consider the degenerate case where 
\begin{equation}
\label{eq:degenerate}
 \Phi^{ \prime}(\Vert h\Vert_1\ell)=1, \text{ and } \Phi^{ (k)}(\Vert h\Vert_1\ell)=0 \text{ for all } k\geq2.
\end{equation}
A typical example satisfying \eqref{eq:degenerate} is of the form $ \Phi(x) \sim x+ a e^{ - \frac{ 1}{ (x-l)^{ 2}}} \mathbf{ 1}_{ x> \Vert h\Vert_1\ell} + b e^{ - \frac{ 1}{ (x-l)^{ 2}}} \mathbf{ 1}_{ x< \Vert h\Vert_1\ell}$ ($a,b\in \mathbb{ R}$) as $x\to \Vert h\Vert_1\ell$ for some fixed-point $\ell$ solution to \eqref{eq:fixed_point_lambda_pos}. A similar analysis as in Table~\ref{tab:stability_p} can be made depending on the signs of $a$ and $b$.

The case where $ \Phi$ satisfies \eqref{eq:degenerate} with $ \Phi$ being analytic around $ \ell$ is even more degenerate, as it corresponds to the case where there is some $ \delta>0$ such that $ \Phi(\Vert h\Vert_1 \ell)=\ell$ on $ \left(\ell- \delta, \ell+ \delta\right)$. In this case, stability around the fixed-point $\ell$ obviously does not hold, as for any $ \varepsilon\in \left(0, \delta\right)$, $ \lambda^{\ell+ \varepsilon}$ solution to \eqref{eq:conv_gen_lambda} starting from $ \xi^{ eq, \ell+ \varepsilon}$ is constant, equal to $\ell+ \varepsilon$ (recall Proposition~\ref{prop:monotone}) and does not converge to $\ell$.
\end{remark}

\subsubsection{Stability around the empty source term}
We analyse in this paragraph the asymptotic behavior of \eqref{eq:conv_gen_lambda} starting from (perturbations of) the empty source term $\xi^{\emptyset}$ given by  \eqref{eq:empty_xi}. The case $ \Phi(0)=0$ reduces to the fixed-point stability problem mentioned in Section~\ref{sec:subcritical}. Then $\ell=0$ is a solution to \eqref{eq:fixed_point_lambda_pos} and $\xi^{\emptyset}$ correspond to the equilibrium source term $ \xi^{ eq, 0}$. In particular, starting exactly from \eqref{eq:empty_xi} leads to the constant solution $ \lambda\equiv0$ and the stability analysis around \eqref{eq:empty_xi} goes back to the previous results of Section~\ref{sec:subcritical}.   Therefore, we assume now $\Phi(0)>0$ and that the set of solutions $\ell>0$ to \eqref{eq:fixed_point_lambda_pos} is not empty and we define $\ell_1$ as the infimum of such solutions:
\begin{equation}
\label{eq:ell1}
    \ell_{ 1}:=\inf \left\lbrace \ell>0,\ \Phi(\Vert h \Vert_1\ell)= \ell\right\rbrace.
\end{equation}
Let us introduce the class of perturbations we consider.
\begin{assumption}
\label{ass:xi_empty}
Suppose that the source term $ \xi$ satisfies $\xi_{ 0}< \Vert h\Vert_1\ell_{ 1}$, $\xi_{t} \xrightarrow[ t\to\infty]{}0$, for all $t\geq 0$, $\xi_{ t}^{ \prime} + h(t) \Phi \left(\xi_{ 0}\right)\geq0$ and that there exists some $\delta>0$ such that $\xi_{ t}^{ \prime} + h(t) \Phi \left(\xi_{ 0}\right)>0$ on $(0, \delta)$.
\end{assumption}
\begin{remark}
A class of source terms $ \xi$ satisfying Assumption~\ref{ass:xi_empty} corresponds to $\xi^{\chi, \ell_{ 0}}$ as defined in \eqref{eq:xi_diverges}, for any $\ell_{ 0}< \ell_{ 1}$ and $ \chi$ of class $C^{ 1}$ on $[0, +\infty)$, with $ \chi_{ 0}= \left\Vert h \right\Vert_{ 1}$, $ \chi_{ t} \xrightarrow[ t\to\infty]{} 0$ and $ \chi_{ t}^{ \prime}\leq 0$ with $ \chi_{ t}^{ \prime}< 0$ on $(0, \delta)$. 
The empty source term $ \xi^{ \emptyset}$ corresponds to the choice of $ \ell_{ 0}=0$ and 
$\chi^{ \emptyset}_{ t}=  \int_{ t}^{ +\infty} h(u) {\rm d}u$.
\end{remark}
The main result is the following:
\begin{theorem}
\label{th:conv_empty}
Suppose that $ \Phi(0)>0$, $ \Phi$ is  Lipschitz continuous, of class $C^{1}$ with $\Phi^\prime$ Lipschitz continuous and $ \Phi^{ \prime}>0$. Suppose that $h$ is nonnegative, bounded continuous and integrable on $[0, +\infty)$ and verifies \eqref{eq:cond_h}. Suppose that the source term $ \xi$ satisfies Assumption~\ref{ass:xi_empty}. Then, we have 
\begin{equation*}
\lambda_{ t} \xrightarrow[ t\to\infty]{} \ell_{ 1}
\end{equation*}
where $ \ell_{ 1}>0$ is given by \eqref{eq:ell1}. In particular, for all $\tau\in [0,1)$, there exists some  constants $C, c>0$ such that for all $t\geq 0$
\begin{align}
\label{eq:rates_empty}
\vert \lambda_t -\ell_1 \vert \leq C \begin{cases}e^{-\sqrt{ b \log \left(1/ \tau\right)}\sqrt{t}} & \text{ if for some $B, b>0$,  $H$ satisfies $ \left(\mathcal{E}_{b, B}\right)$}\\
\log(t) ^{b}t^{-b}& \text{ if for some $B, b>0$,  $H$ satisfies $ \left(\mathcal{P}_{b, B}\right)$}\\
e^{- c\log \left(1/ \tau\right)t}&  \text{ if for some $S_h>0$,  $H$ satisfies $ \left(\mathcal{B}_{S_h}\right)$} .\end{cases}
 \end{align}
\end{theorem}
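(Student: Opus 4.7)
The strategy is to first establish that $\lambda$ is strictly increasing, then show it is bounded above by $\ell_1$, identify its limit as $\ell_1$ itself, and finally invoke Theorem~\ref{th:conv_rate} to read off the quantitative rates in \eqref{eq:rates_empty}.

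\medskip
\noindent\emph{Monotonicity.} The hypotheses of Theorem~\ref{th:conv_empty} together with Assumption~\ref{ass:xi_empty} and the positivity requirement \eqref{eq:cond_h} exactly match those of item (b) of Proposition~\ref{prop:monotone}, which gives strict monotonicity of $\lambda$ on $[0,+\infty)$. In particular, since $\Phi$ is strictly increasing and $\xi_0 < \|h\|_1 \ell_1$, one has $\lambda_0 = \Phi(\xi_0) < \Phi(\|h\|_1 \ell_1) = \ell_1$, so $\lambda$ starts strictly below $\ell_1$ and increases.

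\medskip
\noindent\emph{Upper bound $\lambda_t < \ell_1$.} The key auxiliary step is a comparison between the source term $\xi$ and the equilibrium source term $\xi^{eq,\ell_1}$ from \eqref{eq:xi_stationary}. Computing
\begin{equation*}
\bigl(\xi_t - \xi_t^{eq,\ell_1}\bigr)' = \xi'_t + \ell_1 h(t) \geq h(t)\bigl(\ell_1 - \Phi(\xi_0)\bigr) \geq 0,
\end{equation*}
using Assumption~\ref{ass:xi_empty} and $\ell_1 > \Phi(\xi_0)$, shows $t\mapsto \xi_t - \xi_t^{eq,\ell_1}$ is nondecreasing. Since both terms vanish at infinity and the difference is negative at $t=0$, we obtain $\xi_t \le \xi_t^{eq,\ell_1}$ on $[0,+\infty)$. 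Now, arguing by contradiction, suppose there exists a first time $t^{\star}>0$ with $\lambda_{t^{\star}} = \ell_1$. Applying $\Phi^{-1}$ to \eqref{eq:conv_gen_lambda} at $t=t^{\star}$ gives $\|h\|_1 \ell_1 = \xi_{t^{\star}} + \int_0^{t^{\star}} h(t^{\star}-s)\lambda_s\,\mathrm ds$. Since $\lambda_s < \ell_1$ on $[0,t^{\star})$ by strict monotonicity, and since \eqref{eq:cond_h} guarantees that $h(t^{\star}-\cdot)$ is strictly positive on a subset of $[0,t^{\star})$ of positive measure, the convolution is strictly less than $\ell_1 \int_0^{t^{\star}} h(t^{\star}-s)\,\mathrm ds$. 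Rearranging yields $\xi_{t^{\star}} > \ell_1 \int_{t^{\star}}^{+\infty} h(u)\,\mathrm du = \xi_{t^{\star}}^{eq,\ell_1}$, contradicting the comparison above.

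\medskip
\noindent\emph{Identification of the limit.} Since $\lambda$ is monotone and bounded, it converges to some $\ell \in [\lambda_0, \ell_1]$. Proposition~\ref{prop:conv_X_lambda} ensures $\ell = \Phi(\|h\|_1 \ell)$. Because $\Phi(0)>0$, the value $\ell = 0$ is excluded, so $\ell>0$; by definition \eqref{eq:ell1} of $\ell_1$, $\ell \geq \ell_1$, and combined with $\ell \leq \ell_1$ we conclude $\ell = \ell_1$.

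\medskip
\noindent\emph{Rates.} Since $\Phi(\|h\|_1 \cdot)$ starts strictly above the diagonal at $0$ (as $\Phi(0)>0$) and first meets it at $\ell_1$, the geometric inequality $\|h\|_1 \Phi'(\|h\|_1 \ell_1) \leq 1$ holds automatically, and in the strictly subcritical case \eqref{eq:cond_stab_ell_pos} it is $<1$. Set $\tau_0 := \|h\|_1 \Phi'(\|h\|_1 \ell_1)$; for any target $\tau \in (\tau_0, 1)$ one can choose $\varepsilon_0>0$ so small that $\tau_{\varepsilon_0}$ defined by \eqref{eq:rate_tauT} (with $\|\lambda\|_\infty \leq \ell_1$ as explicit upper bound) is below $\tau$. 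By Step~3, the proximity condition \eqref{eq:prox_lambda_ell} is verified from some $t_0(\varepsilon_0)$ onwards, and Theorem~\ref{th:conv_rate} applied to the fixed point $\ell_1$ yields precisely the three regimes of \eqref{eq:rates_empty} (the constant $c$ absorbs the $1/(S_h\vee t_0)$ factor in \eqref{eq:rates_lambda_S}). The main subtlety is ensuring $\tau_0<1$, i.e., ruling out a tangential crossing at $\ell_1$; this is the core structural content of the subcritical setup and is where the comparison $\xi \leq \xi^{eq,\ell_1}$ of Step~2 is most delicate.
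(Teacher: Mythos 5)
Your argument is correct and follows essentially the same route as the paper's proof: strict monotonicity from Proposition~\ref{prop:monotone}, the bound $\lambda_t\leq \ell_1$ via a first-hitting-time contradiction, identification of the limit through Proposition~\ref{prop:conv_X_lambda} together with the definition \eqref{eq:ell1}, and finally Theorem~\ref{th:conv_rate} for the rates. Your Step 2 is only a cosmetic repackaging: the comparison $\xi_t\leq \xi^{eq,\ell_1}_t$ that you obtain from the monotonicity of $t\mapsto \xi_t-\xi^{eq,\ell_1}_t$ is exactly what the paper gets by integrating $-\xi_t'\leq h(t)\Phi(\xi_0)$ over $[t,\infty)$, and your contradiction at the first time $\lambda$ reaches $\ell_1$ is the paper's contradiction at the first time $x$ reaches $\left\Vert h\right\Vert_1\ell_1$, which is the same statement since $\Phi$ is strictly increasing. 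One correction to your closing remark: the comparison $\xi\leq \xi^{eq,\ell_1}$ has no bearing on whether $\tau_0=\left\Vert h\right\Vert_1\Phi^{\prime}(\left\Vert h\right\Vert_1\ell_1)<1$. That inequality is a property of $\Phi$ and $h$ alone, it is not ensured by the hypotheses of the theorem (Remark~\ref{rem:empty} explicitly allows the critical case $\tau_0=1$, in which the convergence $\lambda_t\to\ell_1$ still holds but the quantitative estimates of Theorem~\ref{th:conv_rate} are not available), and it cannot be "ruled out" by any step of the proof. The paper's own proof likewise simply invokes Theorem~\ref{th:conv_rate} at this point, so your quantitative step is at the same level of rigor as the paper's; just do not attribute the strict subcriticality at $\ell_1$ to your Step 2.
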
 Proof of Theorem \ref{th:conv_empty} is given in Section \ref{sec:proof_th_empty}. Estimate \eqref{eq:rates_empty} is nothing else than a simple application of the rates of convergence found in \eqref{eq:rates_lambda_exp} to the present situation ($\xi^\emptyset$ has exponential decay with any rate $a>0$). 
However, we do not have any a priori explicit control on the constants $C$ and $c$ as they depend on the time $t_0$ necessary for $\lambda_t$ to enter an $\varepsilon_0$-neighborhood of $\ell_1$ (recall \eqref{eq:prox_lambda_ell}), which is not explicitly known.

\begin{remark}
\label{rem:empty}
The convergence to $ \ell_{ 1}$ starting from (perturbations of) the empty source term is valid disregarding the value of the derivatives of $ \Phi$ in $\ell_{ 1} \Vert h\Vert_1$. Note that, since $ \Phi(0)>0$, we necessarily have that $ \Vert h\Vert_1\Phi^{ \prime}(\ell_{ 1}\Vert h\Vert_1)\leq1$. In particular, it may very well be that $ \Vert h\Vert_1\Phi^{ \prime}(\ell_{ 1}\Vert h\Vert_1)=1$ and (for instance) $ \Phi^{ \prime \prime}(\ell_{ 1}\Vert h\Vert_1)>0$: $\ell_{ 1}$ may be unstable w.r.t. source terms around $ \xi^{ eq, \ell_{ 1}}$ (recall \S~\ref{sec:critical_case}) but however may attract trajectories starting from perturbations of $ \xi= \xi^{ \emptyset}$. 
\end{remark}

\subsection{Application: a functional central limit theorem in the subcritical Hawkes model} 
\label{sec:application_Hawkes}
Let us go back here to the mean-field Hawkes process $\left(Z^1, \ldots, Z^N\right)$ whose conditional intensity is given by \eqref{eq:Hawkes_N}. Let $\ell$ be a solution to \eqref{eq:fixed_point_intro} and let $\xi$ be such that $\lambda^\xi_t \xrightarrow[t\to\infty]{}\ell$.  From a statistical viewpoint, a classical estimator of $\ell$ is, for some $i\in\{1,\ldots,N\}$,
\begin{equation}
\label{eq:hat_ell}
\widehat \ell_{N, t}:=\frac{Z^{i}_t}{t},\ t\geq 0.
\end{equation} 
By exchangeability, the law of $\widehat \ell_{N, t}$ does not depend on $i$. The properties of $\widehat \ell_{N, t}$ in \eqref{eq:hat_ell} can be studied by determining the limiting distribution of $Z^{i}_t$ for large $N$ and $t$.  We are interested in the asymptotic normality of $\widehat \ell_{N, t}$, that is the behavior of 
\begin{equation}
\label{eq:CLT_hat_ell}
    \sqrt{ t} \left( \widehat \ell_{N,t} - \ell\right),\ t\geq 0,\ N\geq 1,
\end{equation} as $(t, N)\to\infty$. It turns out that a slightly different fluctuation result has already been proven in \cite[Theorem 10]{MR3449317} for the linear case and in \cite[Theorem 2]{DITLEVSEN20171840} for the nonlinear case. Both results are proven under the assumption that $\xi=\xi^\emptyset$ given by \eqref{eq:empty_xi} and under the strong subcriticality condition \eqref{eq:subcritical_Lip}. More precisely, for the nonlinear case that concerns us presently, it is proven in \cite[Theorem 2]{DITLEVSEN20171840} that for ${\xi=}\xi^\emptyset$ 
\begin{equation}
\label{eq:CLT_DL}
    \sqrt{ {m_{ t}^\emptyset}} \left( \frac{ Z^i_{  t}}{ {m_{ t}^\emptyset}} - 1\right)\xrightarrow[{(t,N)\to(\infty,\infty)},\ {\frac{t}{N}\to0}]{d}\mathcal{N}(0,1)
\end{equation} where
 $m^\emptyset_t:=\int_0^t\lambda^\emptyset_s\rmd s$, where $\lambda^\emptyset$ is the solution to \eqref{eq:conv_gen_lambda} driven by $\xi^\emptyset$. 
This result must be refined to make it statistically usable and turn it into something relevant to \eqref{eq:CLT_hat_ell}: as it is, the quantity $m_t^{\emptyset}$ in \eqref{eq:CLT_DL} is difficult to quantify, and careful control of its properties when $t$ gets large is necessary to obtain a result that is relevant from an applied perspective. Proceeding as in \cite{duval2021interacting}, we have the decomposition 
\begin{align}
    \label{eq:Dec_ellT}
\sqrt{ t} \left( \widehat \ell_{N,t} - \ell\right)= \sqrt{ \frac{ m_{ t}}{ t}} \sqrt{ m_{ t}} \left( \frac{ Z^i_{  t}}{ m_{ t}} - 1\right) + \sqrt{ t} \left( \frac{ m_{ t}}{ t} - \ell\right):=  I_{ N, t}(1) + I_{t}(2).
\end{align} 
The control of the first term $I_{N,t}(1)$ is similar to \eqref{eq:CLT_DL}. The result of Theorem \ref{th:conv_rate} provides a valuable answer to the estimation of the second term $I_t(2)$ in the left hand side of \eqref{eq:Dec_ellT} as, thanks to Theorem \ref{th:conv_rate}, the limits of the terms $\sqrt{m_t}/t$ and $I_t(2)$ can be rendered explicit. Thus, we obtain the following result, established not only for $\xi=\xi^{\emptyset}$ but for a general source term $\xi$.

\begin{theorem}
\label{th:TCL}
Let $\Phi$ be a twice differentiable Lipschitz function such that $\|\Phi''\|_{\infty}<\infty$ and $h$ a square-integrable function on $[0, +\infty)$. Suppose that Condition \eqref{eq:subcritical_Lip} is true. Let $N\ge 1$ and consider a family of predictable processes $\xi_N=(\xi_{N,t})_{t\ge 0}$ and a bounded source term $\xi$ with $\xi_t\xrightarrow[t\to\infty]{}0$ such that $\sup_{s\ge 0}\E[|\xi_{N,s}-\xi_s|]\le C_\xi/\sqrt{N}$  for some positive constant $C_\xi>0$. Define $\lambda=\lambda^\xi$ as the solution to \eqref{eq:conv_gen_lambda} with source term $\xi$. Additionally, suppose that $ \xi$ satisfies \eqref{eq:zeta_pol} for some $A>0$ and $a>1/2$ and $H$ given in \eqref{eq:Ht} satisfies $ \left(\mathcal{P}_{b, B}\right)$ for some $B>0$ and $b>1/2$.
Let $ (Z^i_t)_t\ge 0, i=1\ldots,N$, be the family of Hawkes processes defined as in Section~\ref{sec:NRE_intro} with intensity function \eqref{eq:Hawkes_N}, with source term $ \xi_N$, kernel $h$ and function $\Phi.$ Let $m_t:=\int_0^t\lambda_s\rmd s$. Then it holds that, for all $i=1,\ldots,N$,
\begin{align}\label{eq:Func_CLT} \left(\sqrt{m_t}\frac{ Z^i_{ut}-m_{ut}}{m_t}\right)_{u\in[0,1]}\xrightarrow[(t,N)\to (\infty,\infty),\ \frac{t}{N}\to 0]{d} B,\end{align} where $ \xrightarrow[]{d}$ is the convergence in distribution and $B=(B_u)_{u\in[0,1]}$ is a standard Brownian motion.
As a corollary we get that: \begin{align}
\label{eq:Res_TCL_est}
\sqrt{ t} \left( \widehat \ell_t - \ell\right)\xrightarrow[{(t,N)\to(\infty,\infty)},\ {\frac{t}{N}\to0}]{d}\mathcal{N}(0,\ell).\end{align}
where $\ell$ is the unique solution to Equation \eqref{eq:fixed_point_lambda}.
\end{theorem}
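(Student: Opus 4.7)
\medskip
\noindent\emph{Proof plan.} The natural route is to work with the decomposition \eqref{eq:Dec_ellT} of $\sqrt{t}(\widehat\ell_{N,t}-\ell)$ into a ``stochastic'' term $I_{N,t}(1)$ and a ``deterministic drift'' term $I_t(2)$, treat each separately, and recover the corollary from the functional convergence \eqref{eq:Func_CLT} combined with Slutsky's lemma. The functional CLT itself is where the real probabilistic work lies, while the corollary is mostly an analytic consequence of Theorem~\ref{th:conv_rate}.

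For the functional CLT \eqref{eq:Func_CLT}, I would first couple the interacting process $Z^{i,N}$ with an auxiliary inhomogeneous Poisson process $\bar Z^i$ of intensity $\lambda_t$ solving \eqref{eq:conv_gen_lambda}, using the standard thinning construction from a common Poisson measure on $\R_+\times\R_+$ (this is \eqref{eq:coupling} in Appendix~\ref{sec:prf_TCL}). The strong subcriticality \eqref{eq:subcritical_Lip} and the assumption $\sup_{s\ge 0}\E[|\xi_{N,s}-\xi_s|]\le C_\xi/\sqrt N$ allow, via a Gronwall argument on $\E[|\lambda^{N,i}_s-\lambda_s|]$ (the Lipschitz constant of the linear map $g\mapsto\int_0^t h(t-s)g_s\,\rmd s$ on suitable weighted spaces is $|\Phi|_{Lip}\|h\|_1<1$), to obtain a bound of the form $\E[|Z^{i,N}_t-\bar Z^i_t|]\le C\sqrt{t/N}$ uniformly in $t$ in the subcritical regime; this makes the coupling error negligible inside $\sqrt{m_t}^{-1}(Z^{i,N}_{ut}-m_{ut})$ as soon as $t/N\to 0$. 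Once we reduce to $\bar Z^i$, the compensated process $M_t:=\bar Z^i_t-m_t$ is a martingale with quadratic variation $\langle M\rangle_t=m_t\to\infty$, so a time-change (Dambis--Dubins--Schwarz, in the Poissonian form: $\bar Z^i_{m^{-1}(v)}$ is a standard Poisson process) combined with the classical Donsker-type invariance principle for Poisson processes yields $(M_{ut}/\sqrt{m_t})_{u\in[0,1]}\Rightarrow B$ in $D([0,1])$; tightness follows from the Aldous criterion applied to the predictable bracket $m_{ut}/m_t\to u$, the latter limit itself being a consequence of $\lambda_t\to\ell$.

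For the corollary \eqref{eq:Res_TCL_est}, I first note that the convergence $\lambda_t\to\ell$ from Theorem~\ref{th:conv_rate} (which applies under \eqref{eq:subcritical_Lip} since this condition is stronger than \eqref{eq:cond_stab_ell}) gives $m_t/t\to\ell$, hence $\sqrt{m_t/t}\to\sqrt\ell$. Thus the first term in \eqref{eq:Dec_ellT} is $I_{N,t}(1)=\sqrt{m_t/t}\cdot\sqrt{m_t}(Z^i_t/m_t-1)$, which by \eqref{eq:Func_CLT} evaluated at $u=1$ and Slutsky converges to $\sqrt\ell\cdot \mathcal N(0,1)=\mathcal N(0,\ell)$. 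It then remains to show that the deterministic term $I_t(2)=\frac{1}{\sqrt t}\int_0^t(\lambda_s-\ell)\,\rmd s$ tends to $0$. Here I invoke Theorem~\ref{th:conv_rate}\ref{it:xi_pol_decay} in the polynomial-polynomial case: with $\xi$ satisfying $(\mathcal P_{a,A})$, $a>1/2$, and $H$ satisfying $(\mathcal P_{b,B})$, $b>1/2$, we have $|\lambda_s-\ell|\lesssim s^{-a}\vee \log(s)^b s^{-b}$ for $s$ large, and since both exponents exceed $1/2$, the integral $\int_1^t|\lambda_s-\ell|\,\rmd s$ is $o(\sqrt t)$, so $I_t(2)\to 0$.

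The main obstacle I anticipate is the coupling estimate in the functional sense: obtaining $\E[\sup_{u\in[0,1]}|Z^{i,N}_{ut}-\bar Z^i_{ut}|]\le C\sqrt{t/N}$ with a constant uniform in $t$ demands care, because a naive Gronwall gives a constant that blows up with $t$; the strong subcritical condition \eqref{eq:subcritical_Lip} must be used in a Volterra-resolvent form (together with the $L^2$-integrability of $h$) to close the estimate uniformly in $t$. Once this uniform coupling bound is in hand, the transfer of the functional CLT from $\bar Z^i$ to $Z^{i,N}$ under $t/N\to 0$ is standard, and the remaining steps are the analytic bookkeeping described above.
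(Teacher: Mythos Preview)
Your plan is essentially the paper's proof: couple $Z^i$ with an inhomogeneous Poisson process $\bar Z^i$ of intensity $\lambda$ via a common Poisson random measure, apply a martingale functional CLT to $\bar Z^i-m$ (the paper cites \cite[Lemma~12]{MR3449317} / \cite[Th.~VIII-3.11]{JS} rather than Aldous, but the mechanism is the same), and kill the deterministic drift $I_t(2)$ with the polynomial rate from Theorem~\ref{th:conv_rate}. One small correction on your ``main obstacle'': the coupling bound one actually obtains (and all one needs) is $\E\big[\sup_{s\le t}|Z^i_s-\bar Z^i_s|\big]\le \tilde C\,t/\sqrt{N}$, linear in $t$, with $\tilde C=(C_\xi+\|\lambda\|_\infty^{1/2}\|h\|_2)/(1-|\Phi|_{Lip}\|h\|_1)$; after division by $\sqrt{m_t}\sim\sqrt{\ell t}$ this gives $O(\sqrt{t/N})\to 0$ under $t/N\to 0$, so the subcriticality enters only through the finite constant $\tilde C$ and no extra resolvent machinery beyond this is required.
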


Proof of Theorem \ref{th:TCL} is given in Appendix \ref{sec:prf_TCL}.

\begin{remark}
The  hypothesis made on $\xi$ and $H$ in Theorem~\ref{th:TCL} corresponds to the slowest decay behavior within Assumption~\ref{ass:decay} for which Theorem~\ref{th:conv_rate} ensures that $I_t(2)$ in \eqref{eq:Dec_ellT} goes to $0$ as $t\to\infty$. If $\xi$ (resp. $H$) satisfies the other cases \eqref{eq:zeta_exp} and \eqref{eq:zeta_S} of Assumption~\ref{ass:decay}, $\xi$ (resp. $H$) also satisfies \eqref{eq:zeta_pol}, changing the constants $A$ and $B$ if necessary.

In \cite{MR3054533} a similar result is proven in case where $\Phi$ is linear, $h\geq 0$ and $\xi=\xi^\emptyset$. More precisely, it is proven in \cite[Lemma 5]{MR3054533} that $I_{ t}(2)\to 0$ as $t\to\infty$ under the following condition
\begin{equation}
\label{eq:cond_BDHM}
\int_0^\infty\sqrt{t}h(t)\rmd  t<\infty.
\end{equation}
Theorem \ref{th:conv_rate} allows to handle this term in the nonlinear case and with a non zero source term, extending \cite[Corollary 1]{MR3054533} to these cases. Comparing the Condition \eqref{eq:cond_BDHM} and the one imposed by Theorem \ref{th:conv_rate}, we observe that they are almost equivalent. Indeed, supposing that   $\Gamma:=\int_0^\infty\sqrt{t}h(t)\rmd  t<\infty$ then,  for any $t\ge 0$  $$\sqrt{t}\int_t^\infty h(u)\rmd u\le \int_t^\infty\sqrt{u}h(u)\rmd u\le \Gamma $$ and the constraint $\int_t^\infty h(u)\rmd u\le B t^{-b}$ for any $t\ge0$ is satisfied for $B=\Gamma$ and $b=1/2.$ Conversely, if $\int_t^\infty h(u)\rmd u\le B t^{-b}$ for any $t\ge0$ is satisfied for some  $B>0$ and $b>1/2,$ then, writing for $u\geq 1$, $\sqrt{u}= \int_{1}^{\sqrt{u}} {\rm d}y +1$ and using Fubini-Tonelli Theorem, we get
\begin{align*}
    \int_0^\infty \sqrt{u}h(u)\rmd u
    &= \int_0^1 \sqrt{u}h(u)\rmd u+ \int_1^\infty\int_{y^2}^\infty h(u)\rmd u\rmd y + \int_1^{+\infty}h(u) {\rm d}u\\
    &\leq \int_0^1 \sqrt{u}h(u)\rmd u + \int_1^\infty \frac{B}{y^{2b}}\rmd y+ \int_1^{+\infty}h(u) {\rm d}u<\infty.
\end{align*}

Finally, a detailed analysis of fluctuations for univariate Hawkes processes in the linear case is done in \cite{Horst2024}, addressing in particular the optimality of the condition of \cite{MR3054533} w.r.t. the existence of Central Limit Theorems. We refer to \cite{Horst2024} for details.
\end{remark}

\section{Particular cases and examples}
\label{sec:examples}
The aim of this Section is to expand the local estimates of the previous section into global study of particular cases (concerning simultaneously  $ \Phi$, $ \xi$ and $h$) that are particularly relevant w.r.t. applications.

\subsection{The case with no fixed-point}

We start with the case where the set of fixed-points solutions to \eqref{eq:fixed_point_intro} is empty (in particular $\Phi(0)>0$ and $\Phi$ is unbounded).
\begin{proposition}
\label{prop:nofixedpoint}
Suppose that $ \Phi$ is continuously differentiable. Suppose that $h$ is nonnegative, nonincreasing continuous and integrable on $[0, +\infty)$. Suppose that $ \xi$ is bounded on $[0, +\infty)$ and verifies $ \xi_{ t} \xrightarrow[ t\to\infty]{}0$. Suppose that equation \eqref{eq:fixed_point_lambda_pos} has no solution. Then, the solution $ \lambda$ to \eqref{eq:conv_gen_lambda} with source term $ \xi$ verifies
\begin{equation*}
\lambda_{ t} \xrightarrow[ t\to\infty]{}+\infty.
\end{equation*}
\end{proposition}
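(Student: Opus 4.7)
The strategy is to argue directly by contradiction on the quantity $L_\ast := \liminf_{t\to\infty}\lambda_{t}$, combining Fatou's lemma applied to the convolution $h\ast \lambda$ with the strict inequality $\Phi(\Vert h\Vert_1 y)>y$ implied by the absence of fixed-points. Londen's theorem is not needed here.

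\emph{Preliminary observations.} The absence of any solution to $\ell=\Phi(\Vert h\Vert_1 \ell)$, combined with $\Phi\geq 0$, forces $\Phi(0)>0$: otherwise $\ell=0$ would itself be a fixed-point. The continuous function $y\mapsto \Phi(\Vert h\Vert_1 y)-y$ is therefore strictly positive at $0$, never vanishes on $[0,+\infty)$ and so (by connectedness) is strictly positive everywhere. This gives $\Phi(\Vert h\Vert_1 y)>y$ for every $y\geq 0$, which rewrites as $\Phi(z)>z/\Vert h\Vert_1$ for $z\geq 0$, and in particular
$$\Phi(z)\xrightarrow[z\to+\infty]{}+\infty.$$
In the excitatory framework underlying this section, $\Phi$ is implicitly taken nondecreasing, a property I will use in the main step.

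\emph{Main step.} Set $L_\ast := \liminf_{t\to\infty}\lambda_{t}\in [0,+\infty]$ and assume for contradiction that $L_\ast<+\infty$. Since $\lambda\geq 0$ and $h\geq 0$, Fatou's lemma applied to $(h\ast\lambda)(t)=\int_0^{+\infty}\mathbf{1}_{[0,t]}(u)\,h(u)\,\lambda_{t-u}\,\mathrm{d}u$ yields
$$\liminf_{t\to\infty}(h\ast\lambda)(t)\ \geq\ \int_0^{+\infty}h(u)\,\liminf_{t\to\infty}\lambda_{t-u}\,\mathrm{d}u\ =\ \Vert h\Vert_1 L_\ast.$$
Choose a sequence $t_n\to +\infty$ with $\lambda_{t_n}\to L_\ast$ and, by compactness in the extended real line, extract a further subsequence along which $I_{t_n}:=(h\ast\lambda)(t_n)\to J\in [\Vert h\Vert_1 L_\ast,+\infty]$. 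Since $\xi_{t_n}\to 0$, passing to the limit in $\lambda_{t_n}=\Phi(\xi_{t_n}+I_{t_n})$ leads to a contradiction in both cases. If $J<+\infty$, continuity of $\Phi$ gives $L_\ast = \Phi(J)$, and monotonicity with $J\geq \Vert h\Vert_1 L_\ast$ then yields
$$L_\ast\ =\ \Phi(J)\ \geq\ \Phi(\Vert h\Vert_1 L_\ast)\ >\ L_\ast,$$
which is impossible. If $J=+\infty$, the preliminary observation shows that $\Phi(\xi_{t_n}+I_{t_n})\to +\infty$, contradicting $\lambda_{t_n}\to L_\ast<+\infty$. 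Hence $L_\ast = +\infty$, which is the desired conclusion $\lambda_t \to +\infty$.

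\emph{Main obstacle.} The only delicate point is the implicit use of monotonicity of $\Phi$, needed to transport the comparison $J\geq \Vert h\Vert_1 L_\ast$ through $\Phi$ in the finite-$J$ case. This is natural in the excitatory context of Section~\ref{sec:excitatory}, but without monotonicity one would first have to invoke Londen's theorem (Theorem~\ref{th:londen}) to exclude boundedness of $\lambda$, and then treat separately the case of an unbounded yet oscillating $\lambda$, which is substantially more involved.
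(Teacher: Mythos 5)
Your argument takes a genuinely different route from the paper's, and it is essentially sound, but one step is not justified under the stated hypotheses. In the finite-$J$ case you pass from $J\geq \Vert h\Vert_1 L_\ast$ to $\Phi(J)\geq \Phi(\Vert h\Vert_1 L_\ast)$ using monotonicity of $\Phi$, justified by the claim that $\Phi$ is ``implicitly taken nondecreasing'' in the excitatory framework. That is not the case: in Proposition~\ref{prop:nofixedpoint}, $\Phi$ is only assumed continuously differentiable (and nonnegative, from the general setup); ``excitatory'' in Section~\ref{sec:excitatory} refers to the sign of $h$, not to monotonicity of $\Phi$, which is an extra hypothesis made only in certain other statements. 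As written, your proof therefore covers a strictly smaller class of $\Phi$ than the proposition. Fortunately the gap closes with your own preliminary inequality: since $\Phi(\Vert h\Vert_1 y)>y$ for every $y\geq 0$, applying it at $y=J/\Vert h\Vert_1\geq 0$ gives $L_\ast=\Phi(J)>J/\Vert h\Vert_1\geq L_\ast$, a contradiction with no monotonicity at all. (Note also that $\Vert h\Vert_1>0$ is automatic here, since otherwise $\ell=\Phi(0)$ would solve \eqref{eq:fixed_point_lambda_pos}, so the division is licit.) The Fatou step and the case $J=+\infty$ are fine as you wrote them.

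With that one-line repair, your proof is correct and follows a different path from the paper's. The paper sets $\Psi(\ell)=\Phi(\Vert h\Vert_1\ell)$, truncates it at a level $M$, smooths the truncation so that it has a unique fixed point close to $M$, applies Londen's Theorem~\ref{th:londen} to the truncated dynamics to obtain convergence to that fixed point, and then uses the comparison principle of Proposition~\ref{prop:comparison} to bound $\lambda$ from below before letting $M\to\infty$; this is the only place in the paper where Londen's theorem is used, and it is precisely why the proposition assumes $h$ nonincreasing. Your direct liminf argument dispenses with both Londen's theorem and the comparison principle, and in fact never uses that $h$ is nonincreasing (nor even its continuity), so the repaired version establishes the conclusion under weaker hypotheses on $h$ — a genuine simplification, provided you drop the unneeded monotonicity step.
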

Note that Proposition~\ref{prop:nofixedpoint} is the only moment in this paper where we make use of Londen's Theorem. In particular, it is restricted to nonnegative nonincreasing kernels $h$.
\begin{proof}[Proof of Proposition~\ref{prop:nofixedpoint}]
Denote by $ \Psi(\ell)= \Phi \left( \left\Vert h \right\Vert_{ 1}\ell\right)$. Since $ \Phi(0)>0$, the fact that, by assumption, there is no such $\ell$ such that $ \Psi(\ell)=\ell$ implies, by continuity, that $ \Psi(\ell)>\ell$ for all $\ell\in \mathbb{ R}$. Fix $M>0$ and define the truncated version of $ \Psi$, $ \Psi^{ M}(\ell):= \min ( \Psi(\ell), M)$. Then, $ \Psi^{ M} \leq \Psi$, $ \Psi^{ M}$ is bounded and $ \ell=M$ is the unique fixed-point of $ \Psi^{ M}$. A technical point here: as it is, $ \Psi^{ M}$ does not match the hypotheses of Theorem~\ref{th:londen}, as $ \Psi^{ M}$ is not smooth. In this sense, replace $ \Psi^{ M}(\ell)= \frac{ \Psi(\ell) +M - \left\vert \Psi(\ell)-M \right\vert}{ 2}$ by a smooth version
\begin{equation*}
\Psi^{ M}_{ k}(\ell)= \frac{ \Psi(\ell) +M - \frac{ 1}{ k}\log \left(2 \cosh \left(k \left(\Psi(\ell)-M\right)\right)\right)}{ 2},\ k\geq1.
\end{equation*}
It is easy to see that $ \Psi_{ k}^{ M}$ is infinitely differentiable with $ \Psi_{ k}^{ M}(\ell)\to \Psi^{ M}(\ell)$ as $k\to\infty$. Moreover, $ \Psi_{ k}^{ M}\leq \Psi^{ M}$ and for large $k\geq1$, $ \Psi_{ k}^{ M}$ has a unique fixed point $\ell_{ k,M}$ such that $\ell_{ k,M} \xrightarrow[ k\to +\infty]{}M$. Fix for the rest of the proof some sufficiently large $k\geq1$ such that $ \ell_{ k, M}\geq \frac{ M}{ 2}$. Then, if one denotes by $ \lambda^{ M}_{ k}$ the solution to \eqref{eq:conv_gen_lambda} driven by $ \Psi^{ M}_{ k}\leq M$, $ \lambda^{ M}_{ k}$ is then bounded and we obtain by Theorem~\ref{th:londen} that $ \lambda^{ M}_{k, t} \xrightarrow[ t\to\infty]{} \ell_{ k, M}$, unique fixed-point of $ \Psi_{ k}^{ M}$. Since $ \Psi_{ k}^{ M} \leq \Psi$, applying now Proposition~\ref{prop:comparison}, we have for all $t\geq0$, $ \lambda^{ M}_{ k, t}\leq \lambda_{ t}$. Taking  $ \liminf_{ t\to\infty}$ on both sides of the previous inequality gives $ \frac{ M}{ 2}\leq \ell_{ k, M}\leq \liminf_{ t\to\infty} \lambda_{ t}$. Since this is true for all $M>0$, we have $ \liminf_{ t\to\infty} \lambda_{ t}=+\infty$ and  the result follows.
\end{proof}

\subsection{The case of a unique fixed-point}

We make in this paragraph the following assumption
\begin{assumption}
\label{ass:examples}
The kernel $ \Phi$ is of class $ \mathcal{ C}^{ 2}$, such that $ \Phi^{ \prime}>0$ with $ \left\Vert \Phi^{ \prime \prime} \right\Vert_{ \infty}<\infty$ and $ \Phi(0)>0$. Suppose that $h$ is nonnegative, bounded continuous and integrable on $[0, +\infty)$. Suppose that $\xi$ is $C^{1}$ on $[0, +\infty)$ with $ \xi_{ t} \xrightarrow[ t\to\infty]{}0$. Suppose also that $ \Phi$ has a unique fixed-point $\ell>0$ solution to \eqref{eq:fixed_point_lambda}.
\end{assumption}
Under the present assumption, we necessarily have
$
\left\Vert h \right\Vert_{ 1}\Phi^{ \prime}(\left\Vert h \right\Vert_{ 1}\ell)\leq 1.
$
We specify here the previous results to the following cases:
\subsubsection{Case $1$:}
Suppose here that 
\begin{equation*}
\Phi \left(\left\Vert h \right\Vert_{ 1} \ell_{ 0}\right)> \ell_{ 0} \text{ for all }  \ell_{ 0}\neq \ell.
\end{equation*}
In this case, $ \Phi$ is unbounded and we necessarily have $ \left\Vert h \right\Vert_{ 1}\Phi^{ \prime}(\left\Vert h \right\Vert_{ 1}\ell)=1$: the fixed-point $\ell$ is critical. Moreover, the following holds:
\begin{enumerate}[label=(\alph*)]
\item The equilibrium source term $ \xi^{ eq, \ell}$ is \emph{globally unstable from above}, in the sense of Proposition~\ref{prop:generic_unstable_stable}: namely, for any $ \ell_{ 0}>\ell$ (and not only on a neighborhood $ \left(\ell, \ell+ \delta\right)$), the solution $ \lambda_{ t}$ starting from the source term $ \xi^{ \ell_{ 0}}$ defined by \eqref{eq:xi_diverges} is increasing and cannot be bounded (otherwise it would converge to the unique fixed-point $\ell$ by Theorem~\ref{th:londen}, which is not possible). Therefore, $ \lambda_{ t} \xrightarrow[ t\to \infty]{}+\infty$. Hence, we are in a case where \eqref{eq:conv_gen_lambda} admits  unbounded solutions that diverge to $+\infty$ as $t\to \infty$.
\item The equilibrium source term $ \xi^{ eq, \ell}$ is \emph{globally stable from below}, in the sense of Proposition~\ref{prop:generic_unstable_stable}. Namely, for any $ \ell_{ 0} \in [0, \ell]$ (and not only in a neighborhood $(\ell- \delta, \ell]$), the solution $ \lambda$ to \eqref{eq:conv_gen_lambda} starting from the source term $ \xi $ given by \eqref{eq:xi_diverges} converges to $\ell$ as $t\to\infty$. This is in particular the case of the solution starting from the empty source term $ \xi^{ \emptyset}$.
\end{enumerate}

\subsubsection{Case 2:}
Suppose now that 
\begin{equation}
\label{eq:case2}
\Phi \left( \left\Vert h \right\Vert_{ 1} \ell_{ 0}\right)> \ell_{ 0} \text{ for all } \ell_{ 0}<\ell \text{ and } \Phi \left(\left\Vert h \right\Vert_{ 1} \ell_{ 0}\right)< \ell_{ 0} \text{ for all } \ell_{ 0}> \ell.
\end{equation}

In this case, the equilibrium source term $ \xi^{ eq, \ell}$ is \emph{globally stable from below and from above}, in the sense of Proposition~\ref{prop:generic_unstable_stable}. Namely, for any $ \ell_{ 0}\in [0, +\infty)$ (and not only in a neighborhood of $\ell$), the solution $ \lambda$ to \eqref{eq:conv_gen_lambda} starting from the source term $ \xi$ given by \eqref{eq:xi_diverges} converges to $\ell$ as $t\to\infty$. This is in particular the case of the solution starting from the empty source term $ \xi^{ \emptyset}$. Moreover, if one assumes furthermore that $ \left\Vert h \right\Vert_{ 1}\Phi^{ \prime}( \left\Vert h \right\Vert_{ 1}\ell)<1$, then strong local stability holds for $ \xi^{ eq, \ell}$ in the sense of Theorem~\ref{th:stab_stationary} and the rates of convergence to $\ell$ established in Theorem~\ref{th:conv_rate} are valid for any source term $ \xi$ such that $ \lambda_{ t}^{ \xi}$ converges to $\ell$. 
Finally, 
\begin{enumerate}[label=(\alph*)]
\item If $ \limsup_{ x\to \infty} \frac{ \Phi\left(x \right)}{ x} \Vert h \Vert_1<1$, then by Proposition~\ref{prop:global_subcrit}, any solution $ \lambda$ to \eqref{eq:conv_gen_lambda} is bounded and, in case $h$ is nonincreasing, Londen's Theorem applies and $ \lambda_{ t}^{ \xi}$ converges to the unique fixed-point $ \ell$, \emph{whatever the source term may be}.
\item However, Proposition~\ref{prop:example_infty} below gives an example of $ \Phi$ with $ \limsup_{ x\to\infty} \frac{ \Phi\left(x \right)}{ x}\Vert h \Vert_1=1$ with a unique subcritical fixed-point $\ell$, verifying \eqref{eq:case2} and such that \eqref{eq:conv_gen_lambda} admits a diverging solution.
\end{enumerate}

\subsection{The sigmoid case}
An example of $ \Phi$ that is commonly met is the  sigmoid kernel, see Figure~\ref{fig:sigmoid}.
\begin{assumption}
\label{ass:sigmoid}
Suppose that $ \Phi$ is sigmoid, \textit{i.e.,} in addition to Assumption~\ref{ass:examples} that $ \Phi$ possesses three fixed-points $\ell_{ s}^{ -} < \ell_{ u} < \ell_{ s}^{ +}$ such that $ \left\Vert h \right\Vert_{ 1}\Phi^{ \prime} \left( \left\Vert h \right\Vert_{ 1}\ell_{ s}^{ \pm}\right)< 1$ and $ \left\Vert h \right\Vert_{ 1}\Phi^{ \prime} \left(\left\Vert h \right\Vert_{ 1}\ell_{ u}\right)>1$. Suppose also that $ \ell \mapsto \Phi \left(\left\Vert h \right\Vert_{ 1}\ell\right)$ is convex-concave around $\ell_{ u}$, that is convex on $[0, \ell_{ u}]$ and concave on $[\ell_{ u}, +\infty)$.
\end{assumption}
The NRE \eqref{eq:conv_gen_lambda} driven by $\Phi$ satisfying Assumption~\ref{ass:sigmoid} is sometimes called in the literature \emph{the multistable NRE} (see \emph{e.g.} \cite{Heesen2021,lucon:hal-05185413}). Although this denomination makes perfect sense in case $h$ is exponential (as one retrieves then the usual notion of stability for ODEs, see Remark~\ref{rem:exponential} below), this notion of multistability requires clarification for general kernels $h$. One of the main motivations of the present paper has been precisely to give a meaning of this notion of multistability as precise as possible. Namely, under Assumption~\ref{ass:sigmoid}, the following holds :
\begin{enumerate}[label=(\alph*)]
\item Both equilibrium source terms $ \xi^{ eq, \ell_{ s}^{ -}}$ and $ \xi^{ eq, \ell_{ s}^{ +}}$ are strongly stable in the sense of Theorem~\ref{th:stab_stationary}. 
\item The equilibrium source term $ \xi^{ eq, \ell_{ u}}$ is strongly unstable in the sense of Theorem~\ref{th:supercritical_unstable2}.
\item If one restrict to source terms of the form $ \xi^{ \ell_{ 0}}_{ t} = \ell_{ 0} \int_{ t}^{+\infty} h(u) {\rm d}u$ (recall \eqref{eq:xi_mono}), we know explicitly the basins of attraction of both the stables points $\ell_{ s}^{ -}$ and $\ell_{ s}^{ +}$ (recall Remark~\ref{rem:monotone_empty_case}): if $ \ell_{ 0}< \ell_{ u}$, then the solution $ \lambda$ to \eqref{eq:conv_gen_lambda} starting from $ \xi^{ \ell_{ 0}}$ converges to $\ell_{ s}^{ -}$ and if $\ell_{ 0}> \ell_{ u}$, $ \lambda$ converges to $ \ell_{ s}^{ +}$, see Figure~\ref{fig:sigmoid}. In particular, $ \lambda^{ \emptyset}$ starting from the empty source term $ \xi^{ \emptyset}$ converges to $ \ell_{ s}^{ -}$. 
\item For any solution $ \lambda$ that converges to either $ \ell_{ s}^{ -}$ or $ \ell_{ s}^{ +}$, rates of convergence found in Theorem~\ref{th:conv_rate} apply.
\end{enumerate}
\begin{figure}[ht]
\centering
\subfloat[A sigmoid kernel $\Phi$.]{\includegraphics[width=0.47\textwidth]{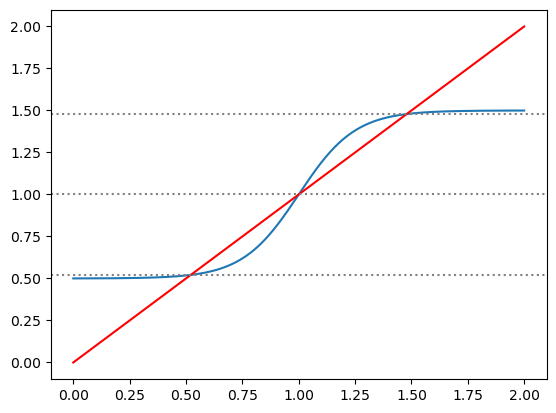}
\label{subfig:sigmoid}}
\quad\subfloat[Trajectories of solutions to \eqref{eq:conv_gen_lambda}.]{\includegraphics[width=0.47\textwidth]{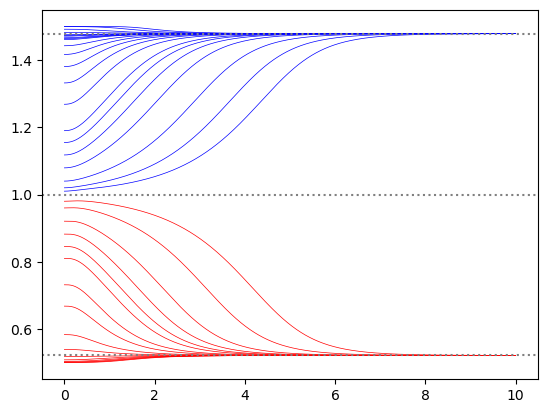}
\label{subfig:basins}}
\caption{{\footnotesize The case of the sigmoid kernel $\Phi(x)=0.5+ \left(1+\exp(-8(x-1))\right)^{-1}$ (Fig.~\ref{subfig:sigmoid}) satisfying Assumption~\ref{ass:sigmoid} with $\ell_{ s}^{ -}\approx0.5212 $, $\ell_u=1$ and $\ell_{ s}^{ +}\approx 
 1.4788$. Here $h$ is the Erlang kernel of order $n=2$ and rate $\alpha=3$ given by Definition~\ref{def:Erlang} below. Fig.~\ref{subfig:basins} shows some trajectories $\lambda$ to \eqref{eq:conv_gen_lambda} starting from $\xi^{\ell}$ given by \eqref{eq:xi_mono} for different values of $\ell$. Red (resp. blue) trajectories, corresponding to values $\ell<\ell_u=1$ (resp. $\ell>\ell_u=1$) all converge to $\ell_s^{-}$ (resp. $\ell_s^{+}$).}}%
\label{fig:sigmoid}%
\end{figure}
\subsection{The Erlang example}
\label{sec:Erlang}
A choice of kernels $h$ that have been extensively studied in the literature \cite{Chevallier2021, Duarte:2016aa}  concerns Erlang kernels:
\begin{definition}
\label{def:Erlang}
Let $ n\geq1$ and $ \alpha>0$, $h$ is the Erlang kernel of order $n$ with rate $ \alpha$ if
\begin{equation}
\label{eq:Erlang}
h_n(t)= \alpha^{ n+1} e^{ - \alpha t} \frac{ t^{ n}}{ n!},\quad \forall t\geq0.
\end{equation}
Note that the renormalization in \eqref{eq:Erlang} has been chosen so that $ \left\Vert h \right\Vert_{ 1}=1$, which simplifies the expression of the critical conditions \eqref{eq:cond_stab_ell_pos}, \eqref{eq:Phi_supercritical} and \eqref{eq:local_crit} into $ \Phi^{ \prime}(\ell)<1$ (resp. $\Phi^{ \prime}(\ell)>1$ and $\Phi^{ \prime}(\ell)=1$).
\end{definition}
The specificity of Erlang kernels is that, unless $n=0$, $h_{ n}$ is not nonincreasing and $h_{ n}(0)=0$. In particular, Londen's Theorem does not apply. However, the hypotheses made in this paper (see \textit{e.g.,} \eqref{eq:cond_h} and Remark~\ref{rem:monotone_empty_case}) are precisely designed to include Definition~\ref{def:Erlang}:  Proposition~\ref{prop:nofixedpoint} excepted (which is the only moment where we apply Londen's Theorem), every result stated in this paper applies to the case of Erlang kernels.

\subsubsection{Erlang cascade}
In the case of Erlang kernels, the particle system $(Z^{1,N}, \ldots, Z^{N, N})$ given by \eqref{eq:Hawkes_N} becomes Markovian \cite{Duarte:2016aa}. In the context of the mean-field limit, this translates into the fact that the nonlinear convolution equation \eqref{eq:conv_gen_X} can be described in terms of a cascade of coupled ODEs. Namely, write for all $k=0, \ldots, n$, 
\begin{equation}
\label{eq:xk_Erlang}
x_{k, t}= \xi_{k, t} + \int_{ 0}^{t} h_{ n-k}(t-s) \Phi (x_{0, s}) {\rm d}s
\end{equation}
where $ x_{0,t}= x_t$ is the solution to \eqref{eq:conv_gen_X} with source term $ \xi_{0, t}:= \xi_t$ and $ \xi_1, \ldots, \xi_n$ are prescribed $C^{ 1}$ functions on $[0, +\infty)$ such that $ \xi_{j,t} \xrightarrow[ t\to\infty]{}0$ and $ \frac{ {\rm d}}{ {\rm d}t} \xi_{j, t} \xrightarrow[ t\to \infty]{}0$. Denote by $c_{ 0}:= \xi_{0, 0}= \xi_{ 0}$ and $c_{ k}:= \xi_{k, 0}$ for $k=1, \ldots, n$, the initial conditions for all $\xi_k$. Using \eqref{eq:Erlang}, observe that $ h_{ n}^{ \prime}(t)= - \alpha h_{ n}(t) + \alpha h_{ n-1}(t)$, $h_{ n}(0)=0$ for $n\geq1$ and $h_{ 0}^{ \prime}(t)= - \alpha h_{ 0}(t)$, $h_{ 0}(0)= \alpha $, then $ \left(x_0, \ldots, x_n\right)$ solves for $n\geq1$ the following system of coupled non autonomous ODEs: 
\begin{equation}
\label{eq:syst_Erlang}
\begin{cases}
\frac{ {\rm d}}{ {\rm d}t}x_{k, t}&=- \alpha x_{k, t} + \alpha x_{k+1, t} + \alpha \xi_{k,t}  + \frac{ {\rm d}}{ {\rm d}t}\xi_{k,t} - \alpha  \xi_{k+1, t},\ k=0, \ldots, n-1\\
 \frac{ {\rm d}}{ {\rm d}t}x_{n, t}&=- \alpha x_{n, t} + \alpha \Phi (x_{0, t}) + \alpha \xi_{n, t}  + \frac{ {\rm d}}{ {\rm d}t}\xi_{n, t},
\end{cases}
\end{equation}
with initial condition $(x_{0, 0}, \ldots, x_{n, 0})= (c_{ 0}, c_{ 1}, \ldots, c_{ n})$. One readily sees that equilibrium points  to \eqref{eq:syst_Erlang} are $\left(\bar x_0, \ldots, \bar x_n\right)= \left(\ell, \ldots, \ell\right)$, where $ \ell= \Phi(\ell)$ solves \eqref{eq:fixed_point_lambda_pos} (recall that $ \left\Vert h \right\Vert_{ 1}=1$ here).
A simple calculation shows that \eqref{eq:syst_Erlang} reduces to the autonomous system
\begin{equation}
\label{eq:syst_Erlang_aut}
\begin{cases}
\frac{ {\rm d}}{ {\rm d}t}x_{k,t}=- \alpha x_{k,t} + \alpha x_{k+1,t},&\ k=0, \ldots, n-1\\
 \frac{ {\rm d}}{ {\rm d}t}x_{n,t}=- \alpha x_{n,t} + \alpha \Phi (x_{0,t}),&
\end{cases}
\end{equation}
if and only if 
\begin{equation*}
\xi_{n-k, t}= \left(\sum_{ j=0}^{ k} \alpha^{ j} c_{ n-k+j} \frac{ t^{ j}}{ j!} \right)e^{ - \alpha t}, t\geq0,\ \text{ for all }k=0, \ldots, n.
\end{equation*}
In other words, the unique solution $(x_0, \ldots, x_n)$ to the system \eqref{eq:syst_Erlang_aut} with initial condition $ \left(c_{ 0}, \ldots, c_{ n}\right)$ has the representation \eqref{eq:xk_Erlang}. We have the following particular cases:
\begin{itemize}
\item Taking $ c_{ 0}= c_{ 1}= \ldots = c_{ n}$ correspond to the source term $ \xi_{ t}^{ c_{ 0}}= c_{ 0} \int_{ t}^{+\infty} h_{ n}(u) {\rm d}u$ given by \eqref{eq:xi_mono} for the kernel $h_{ n}$. In particular, taking $c_{ 0}= c_{ 1}= \ldots= c_{ n}= 0$ corresponds to the empty source term $ \xi^{ \emptyset}$ given by \eqref{eq:empty_xi}.
\item More generally, for any $L\in\{1,\ldots,n-1\}$, taking $c_{ 0}= c_{ 1}= \ldots =c_{ L}\neq 0$ and $c_{ L+1}= \ldots= c_{ n}=0$ corresponds to the source term given by \eqref{eq:xi_mono} 
\begin{equation}
\label{eq:xi_L}
    \xi_{L, t}^{c_{ 0}}:= c_{ 0} \int_{ t}^{+\infty} h_{ L}(u) {\rm d}u,\ t\geq 0,
\end{equation} for the Erlang kernel  $h_{ L}$ of lower order $L<n$.
\end{itemize}

\subsubsection{Stability w.r.t. polynomial perturbations in the subcritical case}
The Jacobian matrix of \eqref{eq:syst_Erlang_aut} around some equilibrium point $(\ell, \ldots, \ell)$ is
\begin{equation}
\label{eq:Jl}
J_{ \ell}:= \alpha \begin{pmatrix}
 -1 & 1 & 0 & \cdots  & 0\\
 0 & -1 & 1 & \ddots &  \vdots \\
 \vdots & \ddots & \ddots & \ddots  & 0\\
 0 &  & \ddots & \ddots & 1\\
   \Phi^{ \prime}(\ell) & 0  & \cdots & 0 & -1
\end{pmatrix}.
\end{equation}
The eigenvalues of $J_{ \ell}$ are $ \lambda_{ k}= \alpha \left(\Phi^{ \prime}(\ell)^{ \frac{ 1}{ n+1}} e^{ \frac{ 2i k \pi}{ n+1}} -1\right)$, $k=0,\ldots, n$. Therefore, $ \Re \lambda_{ k}<0$
for all $ k=0, \ldots, n$ as long as we are in the critical case $ \Phi^{\prime}(\ell)<1$. It allows to deduce that the equilibrium source term $ \xi^{ eq, \ell}$ is locally stable w.r.t. to polynomial perturbations of the form 
\begin{equation}
\label{eq:xi0_Erlang}
\xi_t=\xi_{0, t}= \left(\sum_{ j=0}^{ n} \alpha^{ j} c_{j} \frac{ t^{ j}}{ j!} \right)e^{ - \alpha t}, t\geq0
\end{equation} 
as long as the coefficients $ \left(c_{ 0}, \ldots, c_{ n}\right)$ are sufficiently close to $(\ell, \ldots, \ell)$. We actually prove far more in Theorems~\ref{th:conv_rate} and ~\ref{th:stab_stationary}, even in this simple Erlang case: $ \xi^{ eq, \ell}$ is actually stable w.r.t. to \emph{any perturbations} that is sufficiently close to $ \xi^{ eq, \ell}$, not only w.r.t. polynomial perturbations of the form \eqref{eq:xi0_Erlang}. Note that the previous local stability argument around any $(\ell, \ldots, \ell)$ does not say anything about the convergence of the solution starting from $(0, \ldots, 0)$ (this corresponds to the empty source $ \xi^{ \emptyset}$) nor from $(\ell, \ldots, \ell, 0, \ldots, 0)$ (this corresponds to $\xi^{\ell}_L:= \ell \int_{ t}^{+\infty} h_{ L}(u) {\rm d}u $ given by \eqref{eq:xi_L} for an order $L< n$), as both initial conditions are generically not close to $(\ell, \ldots, \ell)$. Whereas the first case of $\xi^{\emptyset}$ relies on a very specific monotonicity argument (recall Theorem~\ref{th:conv_empty}), the second case of $\xi_L$ is out of the scope of the present paper. We illustrate in Figure~\ref{fig:Erlang_diff} the difficulty of dealing with this second case in general.

\begin{figure}[ht]
\centering
\includegraphics[width=0.5\textwidth]{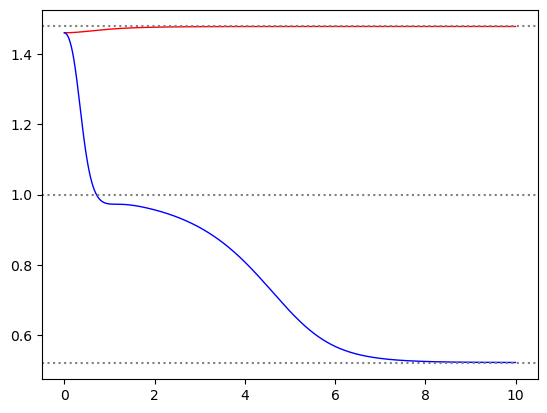}
\caption{{\footnotesize Two different trajectories of \eqref{eq:conv_gen_lambda} in case $\Phi$ is given as in Fig.~\ref{subfig:sigmoid} and $h$ is Erlang with order $n=2$ and $\alpha=3$, for different choices of source terms $\xi$: the red trajectory corresponds to the choice of $\xi_L^{\ell}$ given by \eqref{eq:xi_L} for $L=n=2$ and the blue one to $\xi_L^{\ell}$ for $L=1<n$. Both correspond to $\ell=1.4$. The red trajectory converges to the upper stable point $\ell_s^{+}$ in accordance to Fig.~\ref{fig:sigmoid}. On the contrary, choosing some lower order source term $\xi_L$ (that obviously does not satisfy the proximity condition \eqref{eq:etat_small} of Theorem~\ref{th:stab_stationary}) leads to the blue trajectory that crosses the unstable point $\ell_u=1$ and  converges to the lower stable point $\ell_s^{-}$. Identifying precisely the basin of attraction around the equilibrium source term $\xi^{eq, \ell_s^{+}}$ is certainly a difficult task.}}%
\label{fig:Erlang_diff}%
\end{figure}

\begin{remark}[The exponential case]
\label{rem:exponential}
It is worth mentioning the specific case $n=0$. The kernel $h$ is then exponential: $h(u)=h_{ 0}(u)= \alpha e^{ - \alpha u}$, $u\geq0$ and the system \eqref{eq:syst_Erlang} reduces to the single ODE 
\begin{equation}
\label{eq:syst_n0}
\frac{ {\rm d}}{ {\rm d}t}x_{t}=- \alpha x_{t} + \alpha \Phi (x_{t}) + \alpha \xi_{t}  + \frac{ {\rm d}}{ {\rm d}t}\xi_{t},\ t\geq 0,
\end{equation} with $ x_{0}= c_{ 0}$. Again, this system becomes autonomous 
\begin{equation}
\label{eq:syst_n0_aut}
\frac{ {\rm d}}{ {\rm d}t}x_{t}=- \alpha x_{t} + \alpha \Phi (x_{t}),\ t\geq 0.
\end{equation}
if and only if 
\begin{equation}
\label{eq:xi_exp_n0}
 \xi_{ t}= \xi_{ 0} e^{ - \alpha t},\ t\geq 0.
\end{equation} 
 The choice of \eqref{eq:xi_exp_n0} is by no means insignificant: this reduction of the dynamics of the NRE \eqref{eq:conv_gen_lambda} into the autonomous ODE \eqref{eq:syst_n0_aut} has been transposed in \cite{CHEVALLIER20191} (and further developed in \cite{agathenerine22,AgatheNerine2025,lucon:hal-05185413}) to a context of Hawkes processes with spatial extension. This has lead to a microscopic interpretation of the well-known \emph{Neural Field Equation} \cite{Amari:1977,Wilson1973}
 \begin{equation}
 \label{eq:NFE}
     \partial_{ t} u_t(x) = - \alpha u_t(x) + \alpha \int_{ \mathcal{ D}} W(x,y) \Phi(u_t(y)) {\rm d}y
 \end{equation}
that is a version of \eqref{eq:syst_n0_aut} with spatial extension. We refer to the aforementioned references for more details and the precise  interpretation of \eqref{eq:NFE}.

\end{remark}

\subsubsection{Stable manifold in the supercritical case}
\label{sec:stable_manifold_Erlang}
Consider the supercritical case \eqref{eq:Phi_supercritical}, that reduces here to $ \Phi^{\prime}(\ell)>1$. It is proven in Theorem~\ref{th:supercritical_unstable2} that the equilibrium solution $\ell$ is then unstable w.r.t. source terms $\xi$ that are uniformly above or below $\xi^{eq, \ell}$, giving rise to solutions $\lambda$ that remain either above or below $\ell$. The possibility nonetheless remains about the existence of generic perturbations of $\xi^{eq, \ell}$ that would lead to solutions $\lambda$ that converge to $\ell$ while oscillating around $\ell$. The possibility of oscillations around its limit has been already discussed by Feller \cite{feller1941} in the linear case (see also Brauer \cite{Brauer1975}). We illustrate this phenomenon in the simple case of the Erlang kernel of order $n=2$ but this example can easily be generalized to higher, orders. Take $n=2$ and $\alpha=1$ for simplicity. The matrix in \eqref{eq:Jl} becomes $J_\ell= \begin{pmatrix}
 -1 & 1 & 0\\
 0 & -1 & 1  \\
 \tau_0 & 0  & -1
\end{pmatrix}$ for $\tau_0:= \Phi^{\prime}(\ell)>1$. The eigenvalues of $J_\ell$ are then $ \left(\lambda_0, \lambda_1, \lambda_2\right):= \left(\tau_0^{\frac{1}{3}}-1, \mu + i \nu, \mu - i \nu\right)$, with $\lambda_0>0$, $ \mu= -1 - \frac{\tau_0}{2}<0$ and $\nu= \frac{\sqrt{3}\tau_0}{2}$. Denote $(v, w, \bar w)$ a basis of eigenvectors associated to $(\lambda_0, \lambda_1, \lambda_2)$. In particular, one can choose $w=w_1 + i w_2$ with $w_1:=\begin{pmatrix}1\\ \tau_0^{\frac{1}{3}}\\ -2 \tau_0^{\frac{2}{3}}\end{pmatrix}$ and $w_2:= \begin{pmatrix}-\sqrt{3}\\ \sqrt{3}\tau_0^{\frac{1}{3}}\\ 0\end{pmatrix}$ and a direct computation shows that any solution to the linear system $Y^{\prime}(t) =J_\ell Y(t)$ may be written as
\[Y(t)= a e^{\lambda_0 t} v + e^{\mu t} \left(\cos\left(\nu t\right)\left(b w_1- c w_2 \right) + \sin\left(\nu t\right)\left(-b w_1+ c w_2 \right)\right),\ t\geq 0,\]
where $a,b,c$ are arbitrary constants. The stable subspace corresponds to the choice $a=0$. Take for simplicity $b=\varepsilon$ and $c=0$ for some small $\varepsilon>0$. Hence, 
\begin{equation}
\label{eq:Yeps}
Y_\varepsilon(t)= \varepsilon e^{\mu t} \left(\cos\left(\nu t\right) - \sin\left(\nu t\right)\right) w_1,\ t\geq 0,    
\end{equation}
is a solution to $Y^{\prime}(t) =J_\ell Y(t)$ such that $Y_\varepsilon(t) \xrightarrow[t\to\infty]{}0$, with initial condition $Y_\varepsilon(0)= \varepsilon w_1$. Then, by Hartman-Grobman Theorem, there exists some homeomorphism $g$ from a neighborhood of $0$ onto a neighborhood of the fixed point $\left(\ell,\ell, \ell\right)$ with $g(0)= \left(\ell,\ell, \ell\right)$ such that, for small $\varepsilon>0$, $X_\varepsilon(t):= g \left(Y_\varepsilon(t)\right)$ is a solution to the nonlinear system \eqref{eq:syst_Erlang_aut} that verifies $X_\varepsilon(t) \xrightarrow[t\to\infty]{}\left(\ell,\ell, \ell\right)$.  This means that for such a fixed $\varepsilon>0$, for the choice of $\left(c_0, c_1, c_2\right):=g\left(Y_\varepsilon(0)\right)= h(\varepsilon w_1)$, the source term $\xi= \xi_0$ given by \eqref{eq:xi0_Erlang} gives rise to a solution $ \lambda^\xi$ to \eqref{eq:conv_gen_lambda} such that $\lambda_t^\xi \xrightarrow[t\to\infty]{}\ell$ under the supercritical condition \eqref{eq:Phi_supercritical}. In view of \eqref{eq:Yeps}, this solution is indeed oscillating around $\ell$. Characterizing the stable manifold around $\ell$ in the supercritical case for general kernels $h$ is certainly a difficult task, but this simple example shows that Theorem~\ref{th:supercritical_unstable2} cannot be significantly improved.

\section{Proofs of the well-posedness and regularity results}
\label{sec:proofs_WPregular}
\subsection{Well-posedness of the mean-field limit: Proof of Proposition \ref{prop:WP_solC1}}
\label{sec:proof_WP_MF}

We first establish uniqueness of a locally bounded solution to \eqref{eq:conv_gen_lambda}. Suppose there exist two solutions $\lambda$ and $\tilde\lambda$ to \eqref{eq:conv_gen_lambda}, using that $\Phi$ is Lipschitz continuous we get
\begin{align*}
v_{t}:&= \left\vert \lambda_{t}-\lambda_{t} \right\vert\le |\Phi|_{Lip}\int_{0}^{t} \left\vert h(t-s) \right\vert v_{ s} {\rm d}s,
\end{align*} uniqueness then follows from Lemma~\ref{lem:Gronwall_eps}. For the existence part, we follow a standard Picard iteration method. Set $\lambda^{0}\equiv 0$ and $$ \lambda^{n+1}_{t}:= \Phi\left(\xi_{t}+\int_0^{t}h(t-u)\lambda^{n}_{u} {\rm d}u\right)\rmd s.$$ Note that for all $n\ge 0$, it holds that 
\begin{align*}
\lambda^{n+1}_{t}&\le \Phi(0)+ |\Phi|_{Lip}\left( \left\vert \xi_{t} \right\vert+\int_{0}^{t} \left\vert h(t-s) \right\vert \lambda^{n}_{s} {\rm d}s \right).
\end{align*}As $t\mapsto \xi_{t}$ is locally bounded, using Lemma~\ref{lem:Gronwall_eps}, we get  $\sup_{t\in[0,T]}\sup_{n\ge 0}\lambda^{n+1}_{t}\le C_{T}$ implying that $\lambda^{n+1}$ is locally bounded. Then, setting $\delta^{n+1}_{t}:= \left\vert \lambda^{n+1}_{t}-{\lambda}^{n}_{t} \right\vert$ one gets using that $\Phi$ is Lipschitz continuous $\delta^{n+1}_{t}\le |\Phi|_{Lip}\int_{0}^{t} \left\vert h(t-u) \right\vert\delta^{n}_{u}du$, so that Lemma~\ref{lem:Gronwall_eps} implies that $$\sup_{t\in[0,T]} \sum_{n\ge 0}\delta^{n}_{t}\le C_{T}$$ and allows to conclude that there exists $\lambda$ locally bounded such that $\lim_{n\to\infty} \sup_{ t\in [0, T]}\left\vert \lambda_{t}-\lambda^{n}_{t} \right\vert=0$. If $ \xi$ is continuous, we see by induction that $ \lambda^{ n}$ is continuous, and hence by local uniform convergence, so is $ \lambda$.

The fact that the solution is $C^1$ is a direct consequence of the following result.
\begin{proposition}
\label{prop:C1}
Suppose that $ \Phi$ is  Lipschitz continuous, of class $C^{1}$ with $\Phi^\prime$ Lipschitz continuous. Suppose that $h$ is continuous on $[0, +\infty)$ and that $\xi$ is $C^{1}$ on $[0, +\infty)$. Let $ \lambda= \lambda^{ \xi}$ be the solution to \eqref{eq:conv_gen_lambda} with source term $ \xi$. Then, $ \lambda$ is of class $ \mathcal{ C}^{ 1}$ on $[0, +\infty)$ and its derivative $ \lambda^{ \prime}$ verifies
\begin{equation}
\label{eq:derivative_lambda}
\lambda'_{t}=\left(\rho(h, \xi, t)+\int_{0}^{t}h(t-u)\lambda'_{u}du\right)\Phi'\left(\xi_{t}+\int_{0}^{t}h(t-u)\lambda_{u}du\right)
\end{equation} 
where the source term in \eqref{eq:derivative_lambda} is given by
\begin{equation}
\label{eq:rho}
\rho(h, \xi, t):=\xi'_{t}+h(t) \Phi \left(\xi_{ 0}\right),\quad t\geq0.
\end{equation}
\end{proposition}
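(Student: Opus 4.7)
The plan is to establish $C^1$-regularity of $\lambda$ through a Picard iteration scheme that propagates smoothness, and to obtain the formula \eqref{eq:derivative_lambda} by passing to the limit in the recurrence satisfied by the derivatives of the iterates. Since $h$ is only assumed continuous (not differentiable), I will rewrite the convolution via the change of variable $u=t-s$, which transfers the $t$-dependence onto the $\psi$-factor rather than onto $h$.

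Concretely, define iterates by $\psi^0_t:=\xi_t$ and
\begin{equation*}
\psi^{n+1}_t := \xi_t + \int_0^t h(t-s)\Phi(\psi^n_s)\rmd s = \xi_t + \int_0^t h(u)\Phi(\psi^n_{t-u})\rmd u,\quad n\geq 0.
\end{equation*}
By induction, each $\psi^n$ is $C^1$ on $[0,+\infty)$ with $\psi^n_0=\xi_0$; Leibniz's rule applied to the second representation gives
\begin{equation*}
(\psi^{n+1})'_t = \xi'_t + h(t)\Phi(\xi_0) + \int_0^t h(u)\Phi'(\psi^n_{t-u})(\psi^n)'_{t-u}\rmd u = \rho(h,\xi,t) + \int_0^t h(t-s)\Phi'(\psi^n_s)(\psi^n)'_s\rmd s.
\end{equation*}
Setting $w^n:=(\psi^n)'$, local boundedness $\sup_{n,\,t\in[0,T]}|w^n_t|\leq C_T$ follows from $\|\Phi'\|_\infty\leq|\Phi|_{Lip}$, continuity of $\rho(h,\xi,\cdot)$ and the Gronwall-type estimate of Lemma~\ref{lem:Gronwall_eps}.

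The next step is to prove uniform convergence of $w^n$ on $[0,T]$. Telescoping,
\begin{equation*}
w^{n+1}_t - w^n_t = \int_0^t h(t-s)\Phi'(\psi^n_s)\bigl(w^n_s-w^{n-1}_s\bigr)\rmd s + \int_0^t h(t-s)\bigl(\Phi'(\psi^n_s)-\Phi'(\psi^{n-1}_s)\bigr)w^{n-1}_s\rmd s.
\end{equation*}
The second, perturbative term is bounded by $|\Phi'|_{Lip}\,C_T\int_0^t|h(t-s)||\psi^n_s-\psi^{n-1}_s|\rmd s$, which is summable in $n$ since $\sum_n\|\psi^n-\psi^{n-1}\|_{\infty,[0,T]}<+\infty$ is precisely what the Picard argument in the proof of Proposition~\ref{prop:WP_solC1} produces. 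A standard Volterra--Gronwall iteration (again via Lemma~\ref{lem:Gronwall_eps}) then yields $\sum_n\|w^{n+1}-w^n\|_{\infty,[0,T]}<+\infty$, so $w^n$ converges uniformly on $[0,T]$ to some continuous $w$.

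Finally, since $\psi^n_t=\xi_0+\int_0^t w^n_s\rmd s$, passing to the limit $n\to\infty$ gives $\psi_t=\xi_0+\int_0^t w_s\rmd s$, so $\psi_t:=\xi_t+\int_0^t h(t-s)\lambda_s\rmd s$ is $C^1$ with $\psi'=w$ satisfying $w_t=\rho(h,\xi,t)+\int_0^t h(t-s)\Phi'(\psi_s)w_s\rmd s$. The chain rule then gives $\lambda'_t=\Phi'(\psi_t)w_t$, and substituting this back into the equation for $w$ produces exactly the formula \eqref{eq:derivative_lambda}. The main obstacle is controlling the derivative iterates $w^n$ when the variable coefficient $\Phi'(\psi^n_\cdot)$ is itself moving with $n$; the telescoping decomposition isolates the contribution due to this drift as an inhomogeneous summable perturbation, reducing the analysis to the Volterra--Gronwall machinery already used for $\psi^n$ itself.
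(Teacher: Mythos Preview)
Your proof is correct and takes a genuinely different route from the paper. The paper proceeds by mollification of $h$: it first handles the easy case where $h$ is $C^1$ (so that $t\mapsto\int_0^t h(t-s)\lambda_s\,\rmd s$ is differentiable with derivative $\int_0^t h'(t-s)\lambda_s\,\rmd s+h(0)\lambda_t$, and \eqref{eq:derivative_lambda} follows by the chain rule), then replaces $h$ by $h_\varepsilon=h\ast\chi_\varepsilon$, constructs the solutions $\lambda_\varepsilon$, and shows that $(\lambda_\varepsilon')_{\varepsilon>0}$ is Cauchy in $C([0,T])$ as $\varepsilon\to 0$ via the Lipschitz continuity of $\Phi'$ and Lemma~\ref{lem:Gronwall_eps}. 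Your approach instead runs the Picard iteration directly on the potential $x$ rather than on $\lambda$, and the change of variable $u=t-s$ is exactly what lets you apply Leibniz with only continuity of $h$, since the $t$-dependence sits on the smooth factor $\Phi(\psi^n_{t-u})$. The two proofs are structurally parallel after that point (both reduce to Cauchy estimates controlled by Lemma~\ref{lem:Gronwall_eps}), but yours is more self-contained, avoids the auxiliary parameter $\varepsilon$, and makes the origin of the source term $\rho(h,\xi,t)$ transparent as the Leibniz boundary contribution. One small caveat: your invocations of Lemma~\ref{lem:Gronwall_eps} for the uniform bound on $w^n$ and the summability of $w^{n+1}-w^n$ are slightly informal, since the lemma is stated for a self-referential inequality while your bounds are recursive in $n$; the standard fix (dominate each $|w^n|$ by the solution $v$ of $v_t=|\rho_t|+L\int_0^t|h(t-s)|v_s\,\rmd s$, and expand the telescoped inequality via convolution powers of $|h|$) is routine, and in fact the paper invokes the lemma with the same looseness in its own Picard argument for Proposition~\ref{prop:WP_solC1}.
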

\subsection{Proof of Proposition~\ref{prop:C1}}
We proceed in several steps:

\medskip
\noindent
\textit{Step 1:  suppose first that $h$ is $C^{ 1}$.} Then, from Proposition~\ref{prop:WP_solC1} $ \lambda$ is continuous and in this case $ t \mapsto \int_{ 0}^{t} h(t-s) \lambda_{ s} {\rm d}s$ is $ C^{ 1}$ (with derivative given by $ t \mapsto \int_{ 0}^{t} h^{ \prime}(t-s) \lambda_{ s} {\rm d}s+ h(0) \lambda_{ t}$). Coming back to \eqref{eq:conv_gen_lambda}, we deduce immediately that $ \lambda$ is $C^{ 1}$, since $ \Phi$ and $ \xi$ are $C^{ 1}$ by assumption. The identity \eqref{eq:derivative_lambda} is then an immediate application of the chain rule. The rest of the proof is now devoted to prove the same result under the assumption that $h$ is continuous only.

\medskip
\noindent \textit{Step 2:  A $C^{1}$ mollification of $h$.} Let $ \left(\chi_{ \varepsilon}\right)_{ \varepsilon>0}$ a $C^{ 1}$ approximation of the identity, \textit{i.e.,} $ \chi_{ \varepsilon}(s)= \varepsilon^{ -1}\chi \left(s/ \varepsilon\right)$ for some $C^{ 1}$ nonnegative function $ \chi$ satisfying $ \int_{ 0}^{+\infty} \chi(u) {\rm d}u=1$. Introduce $h_{ \varepsilon}= h\ast \chi_{ \varepsilon}$ and denote by $ \lambda_{ \varepsilon}$ the solution to \eqref{eq:conv_gen_lambda} with source term $ \xi$ driven by the kernel $ h_{ \varepsilon}$. By Step 1, $ \lambda_{ \varepsilon}$ is $C^{ 1}$ and we have
\begin{equation}
\label{eq:derivative_lambda_eps}
\lambda^{ \prime}_{ \varepsilon, t}=\left(\rho(h_{ \varepsilon}, \xi, t)+\int_{0}^{t}h_{ \varepsilon}(t-u)\lambda^{ \prime}_{ \varepsilon, u}du\right)\Phi'\left(\xi_{t}+\int_{0}^{t}h_{ \varepsilon}(t-u)\lambda_{ \varepsilon, u}du\right).
\end{equation} 

\medskip
\noindent \textit{Step 3: Uniform control in $ \varepsilon$ of $ \lambda_{ \varepsilon}$ and $ \lambda_{ \varepsilon}^{ \prime}$.} Remark first that, by Lipschitz continuity of $ \Phi$, for all $T>0$, $ \varepsilon>0$, $t\in [0,T]$
\begin{align*}
\lambda_{ \varepsilon, t} \leq \Phi(0) + \left\vert \Phi \right\vert_{ Lip} \left( \left\vert \xi_{ t} \right\vert+ \int_{ 0}^{t} \left\vert h_{ \varepsilon}(t-s)  \right\vert\lambda_{ \varepsilon, s} {\rm d}s\right).
\end{align*}
We are therefore in position to apply Lemma~\ref{lem:Gronwall_eps}: there exist some $ \varepsilon_{ 0}>0$ and some constant $C_{ T}>0$ independent of $ \varepsilon>0$ such that 
\begin{equation}
\label{eq:aprioribound_lambdaeps}
 \sup_{ t\in [0, T]}\lambda_{ \varepsilon, t} \leq C_{ T},\ \varepsilon\in \left(0, \varepsilon_{ 0}\right).
\end{equation}
We proceed similarly with $ \lambda_{ \varepsilon}^{ \prime}$: starting from \eqref{eq:derivative_lambda_eps}, we have
\begin{align}
\left\vert \lambda_{ \varepsilon, t}^{ \prime} \right\vert\leq \left\vert \Phi^{ \prime} \left(\xi_{t}+\int_{0}^{t}h_{ \varepsilon}(t-u)\lambda_{ \varepsilon, u}du\right) \right\vert \left( \left\vert \xi_{ t}^{ \prime} \right\vert+ \left\vert h_{ \varepsilon}(t) \right\vert \Phi( \xi_{ 0}) + \int_{ 0}^{t} \left\vert h_{ \varepsilon}(t-u) \right\vert \left\vert \lambda_{ \varepsilon, u}^{ \prime} \right\vert {\rm d}u\right). \label{aux:bound_lambdaprimeeps}
\end{align}
Since $ \Phi^{ \prime}$ is Lipschitz, there is some constant $C>0$ such that
\begin{align*}
\left\vert \Phi^{ \prime} \left(\xi_{t}+\int_{0}^{t}h_{ \varepsilon}(t-u)\lambda_{ \varepsilon, u}du\right) \right\vert \leq C \left(1+ \left\vert \xi_{ t} \right\vert+ \int_{ 0}^{t} \left\vert h_{ \varepsilon}(t-u) \right\vert \lambda_{ \varepsilon, u}  {\rm d}u\right)\leq C_{ T}
\end{align*}
for some $C_{ T}>0$ independent of $ \varepsilon$. Here, we have used the local boundedness of $ \xi$, estimate \eqref{eq:aprioribound_lambdaeps} and the fact that $ \int_{ 0}^{t} \left\vert h_{ \varepsilon}(u) \right\vert {\rm d}u \xrightarrow[ \varepsilon\to 0]{} \int_{ 0}^{t} \left\vert h(u) \right\vert {\rm d}u\leq \int_{ 0}^{T} \left\vert h(u) \right\vert {\rm d}u<+\infty$. Moreover, since $h$ is continuous, $ h_{ \varepsilon}$ converges uniformly to $h$ as $ \varepsilon\to 0$ on $[0, T]$ so that $ \sup_{ t\geq0}\left\vert h_{ \varepsilon}(t) \right\vert\leq \sup_{ t\in[0, T]} \left\vert h(t) \right\vert+1$ 
uniformly in $ \varepsilon>0$. By the local boundedness $ \xi^{ \prime}$, we see from \eqref{aux:bound_lambdaprimeeps} that there exist some $C_{ 1, T}, C_{ 2, T}>0$ independent of $ \varepsilon>0$ such that
\begin{equation*}
\left\vert \lambda_{ \varepsilon, t}^{ \prime} \right\vert\leq C_{ 1,T} + C_{ 2, T} \int_{ 0}^{t} \left\vert h_{ \varepsilon}(t-u) \right\vert \left\vert \lambda_{ \varepsilon, u}^{ \prime} \right\vert {\rm d}u.
\end{equation*}
We conclude again from Lemma~\ref{lem:Gronwall_eps} that there is some $C_{ T}>0$ independent of $ \varepsilon>0$ such that
\begin{equation}
\label{eq:aprioribound_lambdaprimeeps}
\sup_{ t\in [0, T]} \left\vert \lambda_{ \varepsilon, t}^{ \prime} \right\vert \leq C_{ T}.
\end{equation}
\medskip
\noindent \textit{Step 4: Uniform local convergence of $ \lambda_{ \varepsilon}$ to $ \lambda$ as $ \varepsilon\to 0$.} Again by the Lipschitz continuity of $ \Phi$, for $t\in [0, T]$
\begin{align*}
\left\vert \lambda_{ \varepsilon,t} - \lambda_{ t}\right\vert \leq \left\vert \Phi \right\vert_{ Lip} \int_{ 0}^{t} \left\vert h_{ \varepsilon}(t-s)- h(t-s) \right\vert \lambda_{ \varepsilon, s} {\rm d}s + \left\vert \Phi \right\vert_{ Lip} \int_{ 0}^{t} \left\vert h(t-s) \right\vert \left\vert \lambda_{ \varepsilon, s} - \lambda_{ s}\right\vert {\rm d}s.
\end{align*}
By \eqref{eq:aprioribound_lambdaeps}, the first term above may be bounded by $C_{ T} \int_{ 0}^{t} \left\vert h_{ \varepsilon}(u) - h(u)\right\vert {\rm d}u \xrightarrow[ \varepsilon\to 0]{}0$. Therefore, we conclude from Lemma~\ref{lem:Gronwall_eps} that
\begin{equation}
\label{eq:lambdaeps_conv_lambda}
\sup_{ t\in [0, T]} \left\vert \lambda_{ \varepsilon, t} - \lambda_{ t}\right\vert \xrightarrow[ \varepsilon\to 0]{}0.
\end{equation}
\medskip
\noindent \textit{Step 5: The family $ \left(\lambda_{ \varepsilon}^{ \prime}\right)$ is Cauchy  for the uniform norm on $[0, T]$, $\forall T>0$.} Take $ \varepsilon_{ 1}, \varepsilon_{ 2}>0$, then for $t\in [0, T]$, it holds
\begin{align}
\label{eq:bound_lambdas_AB}
\left\vert \lambda_{ \varepsilon_{ 1}, t}^{ \prime} - \lambda_{ \varepsilon_{ 2}, t}^{ \prime} \right\vert \leq A_{ t} + B_{ t},
\end{align}
where we give the definitions of $A_{ t}$ and $B_{ t}$ below. Firstly,
\begin{multline*}
A_{ t}:= \left\vert\Phi^{ \prime} \left(\xi_{ t} + \int_{ 0}^{t} h_{ \varepsilon_{ 1}}(t-u) \lambda_{ \varepsilon_{ 1}, u} {\rm d}u\right) - \Phi^{ \prime} \left(\xi_{ t} + \int_{ 0}^{t} h_{ \varepsilon_{ 2}}(t-u) \lambda_{ \varepsilon_{ 2}, u} {\rm d}u\right) \right\vert \\
\times \left\vert\xi_{ t}^{ \prime} + h_{ \varepsilon_{ 1}}(t) \Phi \left(\xi_{ 0}\right)+\int_{0}^{t}h_{ \varepsilon_{ 1}}(t-u)\lambda^{ \prime}_{ \varepsilon_{ 1}, u}du\right\vert.
\end{multline*}
Using the local boundedness of $ \xi^{ \prime}$, the fact that $h_{ \varepsilon}$ converges uniformly on $[0, T]$ to $h$, estimate \eqref{eq:aprioribound_lambdaprimeeps} together with the fact that $ \int_{ 0}^{t} h_{ \varepsilon_{ 1}}(u) {\rm d}u \xrightarrow[ \varepsilon_{ 1}\to 0]{} \int_{ 0}^{t} \left\vert h(u) \right\vert {\rm d}u$, we see that there exists some $C_{ T}>0$ such that
\begin{align*}
 \left\vert\xi_{ t}^{ \prime} + h_{ \varepsilon_{ 1}}(t) \Phi \left(\xi_{ 0}\right)+\int_{0}^{t}h_{ \varepsilon_{ 1}}(t-u)\lambda^{ \prime}_{ \varepsilon_{ 1}, u}du\right\vert \leq C_{ T}.
\end{align*}
Using now the Lipschitz continuity of $ \Phi^{ \prime}$, we deduce that for some $C_{ T}>0$
\begin{align*}
A_{ t}& \leq C_{ T} \left( \int_{ 0}^{t} \left\vert h_{ \varepsilon_{ 1}}(t-u) - h_{ \varepsilon_{ 2}}(t-u)\right\vert \lambda_{ \varepsilon_{ 1}, u} {\rm d}u + \int_{ 0}^{t} \left\vert h_{ \varepsilon_{ 2}}(t-u) \right\vert \left\vert \lambda_{ \varepsilon_{ 1}, u} - \lambda_{ \varepsilon_{ 2}, u}\right\vert {\rm d}u\right).
\end{align*}
Using \eqref{eq:aprioribound_lambdaeps}, that $ \int_{ 0}^{T} \left\vert h_{ \varepsilon_{ 1}}(u)- h_{ \varepsilon_{ 2}}(u) \right\vert {\rm d}u \xrightarrow[ \varepsilon_{ 1}, \varepsilon_{ 2} \to 0]{} 0$ and \eqref{eq:lambdaeps_conv_lambda}, we obtain that $\sup_{ t\in [0, T]} A_{ t} \xrightarrow[ \varepsilon_{ 1}, \varepsilon_{ 2}\to 0]{}0$.

Secondly, set
\begin{multline*}
B_{ t}:= \left\vert \Phi^{ \prime} \left(\xi_{ t} + \int_{ 0}^{t} h_{ \varepsilon_{ 2}}(t-u) \lambda_{ \varepsilon_{ 2}, u} {\rm d}u\right) \right\vert \\ \times\left\vert  \left(h_{ \varepsilon_{ 1}}(t) - h_{ \varepsilon_{ 2}}(t)\right)\Phi \left(\xi_{ 0}\right) + \int_{ 0}^{t} h_{ \varepsilon_{ 1}}(t-u) \lambda_{ \varepsilon_{ 1}, u}^{ \prime} {\rm d}u - \int_{ 0}^{t} h_{ \varepsilon_{ 2}}(t-u) \lambda_{ \varepsilon_{ 2}, u}^{ \prime} {\rm d}u\right\vert.
\end{multline*}
As before, there is a constant $C_{ T}>0$ such that $ \left\vert \Phi^{ \prime} \left(\xi_{ t} + \int_{ 0}^{t} h_{ \varepsilon_{ 2}}(t-u) \lambda_{ \varepsilon_{ 2}, u} {\rm d}u\right) \right\vert\leq C_{ T}$ uniformly in $ \varepsilon_{ 2}>0$. Moreover, since $h$ is continuous, we have $\sup_{ t\in [0, T]} \left\vert h_{ \varepsilon_{ 1}}(t) - h_{ \varepsilon_{ 2}}(t) \right\vert \xrightarrow[ \varepsilon_{ 1}, \varepsilon_{ 2}\to0]{}0$. It remains to write 
\begin{align*}
\left\vert \int_{ 0}^{t} h_{ \varepsilon_{ 1}}(t-u) \lambda_{ \varepsilon_{ 1}, u}^{ \prime} {\rm d}u - \int_{ 0}^{t} h_{ \varepsilon_{ 2}}(t-u) \lambda_{ \varepsilon_{ 2}, u}^{ \prime} {\rm d}u \right\vert &\leq \int_{ 0}^{t} \left\vert h_{ \varepsilon_{ 1}}(t-u) - h_{ \varepsilon_{ 2}}(t-u) \right\vert  \left\vert \lambda_{ \varepsilon_{ 1}, u}^{ \prime} \right\vert {\rm d}u \\&+ \int_{ 0}^{t} \left\vert h_{ \varepsilon_{ 2}}(t-u)  \right\vert  \left\vert \lambda_{ \varepsilon_{ 1}, u}^{ \prime} - \lambda_{ \varepsilon_{ 2}, u}^{ \prime} \right\vert {\rm d}u 
\end{align*}
and to note that the first term in the righthand side goes to $0$ as $ \varepsilon_{ 1}, \varepsilon_{ 2}\to 0$ (uniformly on $[0, T]$), by \eqref{eq:aprioribound_lambdaprimeeps}. Putting everything together back to \eqref{eq:bound_lambdas_AB} and using again Lemma~\ref{lem:Gronwall_eps}, we conclude that
\begin{equation*}
\sup_{ t\in [0, T]} \left\vert \lambda_{ \varepsilon_{ 1}, t}^{ \prime} - \lambda_{ \varepsilon_{ 2}, t}^{ \prime}\right\vert \xrightarrow[ \varepsilon_{ 1}, \varepsilon_{ 2}\to 0]{}0.
\end{equation*}
\medskip
\noindent \textit{Conclusion:} by the previous steps, $ \lambda_{ \varepsilon}$ converges as $ \varepsilon\to 0$ to $ \lambda$ and $ \lambda$ is $C^{ 1}$. \qedhere

\subsection{Proof of Proposition~\ref{prop:equilibrium}\label{sec:proof_prop_equilibrium}}
Suppose that $ \xi= \xi^{ eq, \ell}$ for some $ \ell = \Phi \left( \kappa \ell\right)$. Hence, we see from \eqref{eq:derivative_lambda} and \eqref{eq:rho} that for all $t\geq0$
\begin{equation*}
\lambda^{ \prime}_{t}=\Phi^{ \prime}\left(\xi_{t}+\int_{0}^{t}h(t-u)\lambda_{u}\rmd u\right)\int_{0}^{t}h(t-u)\lambda^{ \prime}_{u}\rmd u.
\end{equation*}
By continuity, for all $T>0$, there exists $C_{ T}>0$ such that $ \sup_{ t\in [0, T]} \left\vert \Phi^{ \prime}\left(\xi_{t}+\int_{0}^{t}h(t-u)\lambda_{u}\rmd u\right) \right\vert\leq C_{ T}$ so that for all $t\in [0, T]$
\begin{equation*}
\left\vert \lambda^{ \prime}_{t} \right\vert \leq C_{ T} \int_{0}^{t} \left\vert h(t-u) \right\vert \left\vert \lambda^{ \prime}_{u} \right\vert\rmd u.
\end{equation*}
One conclude from Lemma~\ref{lem:Gronwall_eps} that $ \lambda_{ t}^{ \prime}=0$ on $[0, T]$ for all $T$. Hence $ \lambda$ is constant.
Conversely, suppose that $ \lambda$ is constant: $ \lambda_{ t}= \lambda_{ 0}$ for all $t\geq0$. Then $ \lambda_{ 0} = \Phi( \xi_{ t} + \lambda_{ 0} \int_{ 0}^{t}h(u) {\rm d}u)$. Passing to the limit as $t\to \infty$, we obtain that $ \lambda_{ 0} = \Phi \left( \kappa \lambda_{ 0}\right)$ so that $ \lambda_{ 0} = \ell$ is a solution to \eqref{eq:fixed_point_lambda}. Hence for all $t\geq0$, $ \kappa\ell= \Phi^{ -1}(\ell)= \xi_{ t}+ \ell \int_{ 0}^{t} h(u) {\rm d}u$, which gives that $ \xi_{ t}= \xi^{ eq, \ell}_{ t}$ for all $t\geq0$.

\section{Proofs of the stability results in the subcritical case\label{sec:proofs}}

\subsection{Proof of Theorem \ref{th:conv_rate}}
\subsubsection{Preliminary}
Theorem  \ref{th:conv_rate} is a consequence of the following result that we first prove.
\begin{proposition}
\label{prop:conv_rate}
Suppose $\Phi$ is a twice differentiable Lipschitz continuous function such that $\|\Phi''\|_{\infty}<\infty$, $h$ is integrable on $[0, +\infty)$, $\xi$ is bounded with $ \xi_{ t} \xrightarrow[ t\to\infty]{}0$. Define
\begin{equation}
\label{eq:v_xi}
v_{ t}^{ \xi}:= \sup_{ s\geq t} \left\vert \xi_{ s} \right\vert, t\geq 0.
\end{equation}
Suppose that $\ell$ verifies the subcritical condition \eqref{eq:cond_stab_ell} and that $\lambda=\lambda^\xi$ is bounded. Let $ \varepsilon_{ 0}>0$ and $t_0\geq 0$ such that Conditions \eqref{eq:rate_tauT} and \eqref{eq:prox_lambda_ell} are satisfied. Recall in particular the definition of $\tau= \tau(\varepsilon_0)\in [0, 1)$ given by \eqref{eq:rate_tauT}.  
Then, for any $ k\ge1$ and any $M\ge t_{0}$, it holds 
\begin{align}
|\lambda_{(k+1)M}-\ell |
&\le \frac{ \tau}{ \left\Vert h \right\Vert_{ 1}} \sum_{j=0}^{k-1}\tau^{j}v^{ \xi}_{ (k+1-j)M}+ \frac{\tau(\left\Vert \lambda \right\Vert_{ \infty}+2\ell)}{ \left\Vert h \right\Vert_{ 1} \left(1-\tau\right)}{ \int_{ M}^{+\infty} |h(u)| {\rm d}u }+\tau^{k}(\left\Vert \lambda \right\Vert_{ \infty}+ \ell).\label{eq:rate_to_ell}
\end{align} 
\end{proposition}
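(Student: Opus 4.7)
The proof splits into two stages: a local linearisation of the NRE around $\ell$, followed by a discrete block-wise iteration. In the first stage, I would set $x_t := \xi_t + \int_0^t h(t-s)\lambda_s\, {\rm d}s$, so that $\lambda_t=\Phi(x_t)$ and $\ell = \Phi(\kappa\ell)$, and apply the second-order Taylor formula at $\kappa\ell$:
\begin{equation*}
|\lambda_t - \ell| \le |\Phi'(\kappa\ell)|\, |x_t-\kappa\ell| + \tfrac{\|\Phi''\|_\infty}{2}\,(x_t-\kappa\ell)^2.
\end{equation*}
Together with the identity $x_t - \kappa\ell = \xi_t + \int_0^t h(t-s)(\lambda_s-\ell)\, {\rm d}s - \ell \int_t^{+\infty} h(u)\, {\rm d}u$ and the proximity hypothesis $|\lambda_s - \ell| < \varepsilon_0$ for $s\ge M\ge t_0$ (combined with the a priori bound $|\lambda_s-\ell|\le\|\lambda\|_\infty+\ell$ on $[0,M]$), this should produce a uniform estimate of the form $|x_t - \kappa\ell| \le \varepsilon_0(1+2\ell+\|\lambda\|_\infty+\|h\|_1)$ for $t\ge M$. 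Absorbing one copy of $(x_t-\kappa\ell)$ inside the quadratic remainder by this uniform constant turns the Taylor bound into the linear recursive inequality
\begin{equation*}
|\lambda_t - \ell| \;\le\; \frac{\tau}{\|h\|_1}\left(|\xi_t| + \int_0^t |h(t-s)|\,|\lambda_s-\ell|\, {\rm d}s + \ell \int_t^{+\infty} |h(u)|\, {\rm d}u\right),\qquad t\ge M,
\end{equation*}
which is the precise purpose of the choice of $\tau$ in \eqref{eq:rate_tauT}.

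In the second stage, I would introduce the nonincreasing sequence $w_j := \sup_{t \ge jM}|\lambda_t - \ell|$, with $w_1 \le \|\lambda\|_\infty + \ell$ by the trivial bound. For $t \ge jM$ with $j \ge 2$, I would split the convolution integral as $\int_0^t = \int_0^{(j-1)M}+\int_{(j-1)M}^t$. The key geometric observation is that $s \le (j-1)M$ and $t \ge jM$ force $t-s \ge M$, so after the change of variable $u=t-s$ the first piece is bounded by $(\|\lambda\|_\infty+\ell)\int_M^{+\infty}|h(u)|\, {\rm d}u$, while the second is bounded by $w_{j-1}\|h\|_1$ by definition of $w_{j-1}$. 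Combined with $|\xi_t| \le v^\xi_{jM}$ and $\ell\int_t^{+\infty}|h| \le \ell\int_M^{+\infty}|h|$, substituting into the local inequality above and taking the supremum over $t\ge jM$ yields the discrete linear recursion
\begin{equation*}
w_j \;\le\; \tau\, w_{j-1} + \frac{\tau}{\|h\|_1}\,v^\xi_{jM} + \frac{\tau(\|\lambda\|_\infty+2\ell)}{\|h\|_1}\int_M^{+\infty}|h(u)|\, {\rm d}u,\qquad j\ge 2.
\end{equation*}
Iterating $k$ times starting from $w_1$, summing the geometric tail $\sum_{i\ge 0}\tau^i = 1/(1-\tau)$ on the tail-integral contribution, and using $|\lambda_{(k+1)M}-\ell|\le w_{k+1}$ yields exactly \eqref{eq:rate_to_ell}.

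The main obstacle will be the local stage: tracking the constants carefully enough so that one factor of $(x_t-\kappa\ell)^2$ is absorbed precisely into the coefficient $\varepsilon_0\tfrac{\|\Phi''\|_\infty}{2}(1+2\ell+\|\lambda\|_\infty+\|h\|_1)$ appearing in $\tau$, thereby producing a strict contraction factor $\tau<1$ for the linear integral operator acting on $|\lambda_\cdot - \ell|$. Once the local inequality is secured, the second stage reduces to a routine discrete Gronwall argument, and the block index structure of \eqref{eq:rate_to_ell} emerges automatically from the block decomposition of the convolution.
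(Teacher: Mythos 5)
Your proposal is correct and follows essentially the same route as the paper: a second-order Taylor expansion of $\Phi$ at $\kappa\ell$, absorption of one factor of the quadratic remainder via the proximity hypothesis \eqref{eq:prox_lambda_ell} (plus the a priori bound on $[0,t_0]$) into the contraction constant $\tau$ of \eqref{eq:rate_tauT}, and then a block decomposition of the convolution over windows of length $M$ iterated geometrically — the paper phrases your recursion for $w_j=\sup_{t\ge jM}|\lambda_t-\ell|$ as repeated applications of its inequality \eqref{eq:iter_T0_T1}. The only bookkeeping caveat is that the linearized inequality requires, as in the paper, enlarging $t_0$ so that $|\xi_u|$ and $H_u$ are below $\varepsilon_0$ beyond $t_0$ and restricting to $t\ge 2t_0$; since your recursion invokes it only for $j\ge 2$ (hence $t\ge 2M\ge 2t_0$), this matches the paper's own conventions.
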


\begin{proof}[Proof of Proposition~\ref{prop:conv_rate}]
Recall that $ \ell$ solves $ \ell= \Phi \left( \kappa \ell\right)$ with $ \kappa= \int_{ 0}^{+\infty} h(u) {\rm d}u$. Let $t\geq0$, write
\begin{align*}
\lambda_{t}-\ell &= \Phi\left(\xi_{ t}+ \int_{0}^{t}h(t-u)\lambda_{u} {\rm d}u\right) - \Phi\left(\ell \int_{ 0}^{+\infty} h(u) {\rm d}u\right) \\
&=\Phi\left( \xi_{ t}- \ell \int_{ t}^{+\infty} h(u) {\rm d}u+  \int_{0}^{t}h(t-u)(\lambda_{u}-\ell) {\rm d}u + \ell\int_{0}^{+\infty}h(u) {\rm d}u \right) - \Phi\left(\ell \int_{ 0}^{+\infty} h(u) {\rm d}u\right).
\end{align*}
Therefore, a Taylor expansion gives
\begin{align}
\label{aux:Taylor_lambda}
\lambda_{t}-\ell &= \left(\xi_{ t}-  \ell\int_{t}^{+\infty}h(u) {\rm d}u+ \int_{0}^{t}h(t-u)(\lambda_{u}-\ell) {\rm d}u\right) \Phi^{ \prime}( \kappa\ell) + R_{ t},
\end{align}
where the remainder term is
\begin{align}
\label{eq:Rt}
R(t)&:=\int_{ \kappa\ell}^{\beta_{t}}\Phi''(x)(\beta_{t}-x) {\rm d}x\le \frac{ \|\Phi''\|_{\infty}}{ 2}\left(\xi_{ t}-  \ell\int_{t}^{+\infty}h(u) {\rm d}u+ \int_{0}^{t}h(t-u)(\lambda_{u}-\ell) {\rm d}u\right)^{2}.
\end{align}
for $\beta_{t}:=\xi_{ t}+ \int_{0}^{t}h(t-u)\lambda_{u} {\rm d}u$. Fix now $ \varepsilon_{ 0}>0$ and $t_0\geq 0$ satisfying both \eqref{eq:rate_tauT} and \eqref{eq:prox_lambda_ell}: 
For $t\geq t_{ 0}$, $ \left\vert \lambda_{ t} - \ell \right\vert < \varepsilon_{ 0}$ so that
\begin{align*}
\int_{0}^{t} \left\vert h(t-u) \right\vert|\lambda_{u}-\ell| {\rm d}u&=\int_{0}^{t_{ 0}} \left\vert h(t-u) \right\vert|\lambda_{u}-\ell| {\rm d}u+ \int_{t_{0}}^{t} \left\vert h(t-u) \right\vert|\lambda_{u}-\ell| {\rm d}u,\\
&\leq \left( \left\Vert \lambda \right\Vert_{ \infty} + \ell\right)\int_{t-t_{ 0}}^{t} \left\vert h(u) \right\vert {\rm d}u + \varepsilon_{ 0} \int_{0}^{t-t_{ 0}} \left\vert h(u) \right\vert {\rm d}u,\\
&\leq c_{ 0}\int_{t-t_{ 0}}^{+\infty} \left\vert h(u) \right\vert {\rm d}u + \varepsilon_{ 0} \left\Vert h \right\Vert_{ 1},
\end{align*}
where $c_{ 0}:= \left\Vert \lambda \right\Vert_{ \infty}+ \ell$. Recalling the definition of $H_t$ in \eqref{eq:Ht} and
using the previous bound into \eqref{eq:Rt}, we obtain that $t\geq t_{ 0}$, 
\begin{equation}
\label{aux:bound_Rt}
 \left\vert R(t) \right\vert \leq \frac{ \left\Vert \Phi^{ \prime \prime} \right\Vert_{ \infty}}{ 2}\left( \left\vert \xi_{ t} \right\vert+ \ell H_{ t} + c_{ 0} H_{ t-t_{ 0}} + \varepsilon_{ 0} \left\Vert h \right\Vert_{ 1}\right)\left( \left\vert \xi_{ t} \right\vert+  \ell H_{ t}+ \int_{0}^{t} \left\vert h(t-u) \right\vert \left\vert \lambda_{u}-\ell \right\vert {\rm d}u\right).
\end{equation} 
Going back to \eqref{aux:Taylor_lambda}, it follows that for $t\ge t_{0}$, we have
\begin{align*}
|\lambda_{t}-\ell | &\leq \left\vert \Phi^{ \prime}( \kappa\ell) \right\vert\left( \left\vert \xi_{ t} \right\vert +  \ell H_{ t} + \int_{0}^{t} \left\vert h(t-u) \right\vert \left\vert \lambda_{u}-\ell \right\vert {\rm d}u\right)  \\
&+ \frac{ \left\Vert \Phi^{ \prime \prime} \right\Vert_{ \infty}}{ 2}\left( \left\vert \xi_{ t} \right\vert+ \ell H_{ t} + c_{ 0} H_{ t-t_{ 0}} + \varepsilon_{ 0} \left\Vert h \right\Vert_{ 1}\right)\left( \left\vert \xi_{ t} \right\vert+  \ell H_{ t}+ \int_{0}^{t} \left\vert h(t-u) \right\vert \left\vert \lambda_{u}-\ell \right\vert {\rm d}u\right).
\end{align*}
With no loss of generality, we can further assume that $t_{ 0}>0$ given in \eqref{eq:rate_tauT} is chosen such that
\begin{equation}
\label{aux:constraint_t0}
\left\vert \xi_{ u} \right\vert\leq \varepsilon_{ 0}, \text{ and } H_{ u}\leq \varepsilon_{ 0},\ u\geq t_{ 0}.
\end{equation}
Therefore, for $t\geq 2t_{ 0}$,
\begin{align*}
|\lambda_{t}-\ell | &\leq \left\lbrace \left\vert \Phi^{ \prime}( \kappa\ell) \right\vert+ \varepsilon_{ 0}\frac{ \left\Vert \Phi^{ \prime \prime} \right\Vert_{ \infty}}{ 2}\left( 1+ \ell  + c_{ 0} + \left\Vert h \right\Vert_{ 1} \right)\right\rbrace\left( \left\vert \xi_{ t} \right\vert +  \ell H_{ t} + \int_{0}^{t} \left\vert h(t-u) \right\vert \left\vert \lambda_{u}-\ell \right\vert {\rm d}u\right).
\end{align*}
Note that the prefactor above is nothing else than $ \frac{ \tau}{ \left\Vert h \right\Vert_{ 1}}$ where $ \tau$ is given in \eqref{eq:rate_tauT}. Hence, we obtain, for $t\geq 2 t_{ 0}$
\begin{align}
\label{eq:Gron_lambda_ell}
|\lambda_{t}-\ell | &\leq \frac{ \tau}{ \left\Vert h \right\Vert_{ 1}}\left( \left\vert \xi_{ t} \right\vert +  \ell H_{ t} + \int_{0}^{t} \left\vert h(t-u) \right\vert \left\vert \lambda_{u}-\ell \right\vert {\rm d}u\right).
\end{align}
Let us now make an auxiliary calculation: for any general $T\geq 0$ and $t\geq T$, it holds
\begin{align}
\int_{0}^{t} \left\vert h(t-u) \right\vert|\lambda_{u}-\ell| {\rm d}u&=\int_{0}^{T} \left\vert h(t-u) \right\vert|\lambda_{u}-\ell| {\rm d}u+\int_{T}^{t} \left\vert h(t-u) \right\vert|\lambda_{u}-\ell| {\rm d}u \nonumber\\
&\leq c_{ 0}\int_{t-T}^{+\infty} \left\vert h(u) \right\vert {\rm d}u+ \left\Vert h \right\Vert_{ 1}\sup_{u\ge T}|\lambda_{u}-\ell| ,\label{eq:conv_h_lambdaell}
\end{align} 
where we recall that $c_{ 0}= \left\Vert \lambda \right\Vert_{ \infty} + \ell$. Combining \eqref{eq:Gron_lambda_ell} and \eqref{eq:conv_h_lambdaell}, we obtain, for $t\ge \max(T, 2t_{ 0})$,
\begin{align}
|\lambda_{t}-\ell | \leq \frac{ \tau}{ \left\Vert h \right\Vert_{ 1}} \left( \left\vert \xi_{ t} \right\vert +  \ell H_{ t} + c_{ 0}H_{ t-T} + \left\Vert h \right\Vert_{ 1}\sup_{ u\geq T} \left\vert \lambda_{ u}-\ell \right\vert\right)\nonumber.
\end{align} 
The rest of the proof consists in iterating the resulting inequality: recalling the definition of $v^{ \xi}$ in \eqref{eq:v_xi}, for any $T_{ 1}\geq T_{ 0} \geq 2t_{ 0}$,
\begin{align}
\label{eq:iter_T0_T1}
\sup_{ t\geq T_{ 1}}|\lambda_{t}-\ell | \leq \frac{ \tau}{ \left\Vert h \right\Vert_{ 1}} \left( v^{ \xi}_{ T_{ 1}}+  \ell H_{ T_{ 1}} + c_{ 0}H_{ T_{ 1}-T_{ 0}} + \left\Vert h \right\Vert_{ 1}\sup_{ u\geq T_{ 0}} \left\vert \lambda_{ u}-\ell \right\vert\right).
\end{align}
Start with some $ M \ge 2t_{0}>0$ and set $T_{ 0}=M$ and $T_{ 1}=2M$: from \eqref{eq:iter_T0_T1}, it holds that 
\begin{align*}
\sup_{t\ge 2M}|\lambda_{t}-\ell | &\leq \frac{ \tau}{ \left\Vert h \right\Vert_{ 1}} \left(v_{ 2M}^{ \xi}+\ell H_{2M} + c_{ 0} {H_{M} }+ \left\Vert h \right\Vert_{ 1}\sup_{ u\geq M} \left\vert \lambda_{ u}-\ell \right\vert\right).\end{align*} 
Applying now \eqref{eq:iter_T0_T1} with $T_{ 0}=2M$ and $T_{ 1}=3M$, we have
\begin{align*}
\sup_{t\ge 3M}|\lambda_{t}-\ell | &\leq \frac{ \tau}{ \left\Vert h \right\Vert_{ 1}} \left(v_{ 3M}^{ \xi}+\ell H_{3M}+ c_{ 0} {H_{M} }+ \left\Vert h \right\Vert_{ 1}\sup_{ u\geq 2M} \left\vert \lambda_{ u}-\ell \right\vert\right)\\
 &\leq \frac{ \tau}{ \left\Vert h \right\Vert_{ 1}} (v_{ 3M}^{ \xi}+ \tau v_{ 2M}^{ \xi} ) + \frac{ \tau}{ \left\Vert h \right\Vert_{ 1}}\ell (H_{ 3M}+ \tau H_{2M})+ c_{ 0} \frac{ \tau}{ \left\Vert h \right\Vert_{ 1}}(1+ \tau){H_{M} }+ \tau^{2}\sup_{ u\geq M} \left\vert \lambda_{ u}-\ell \right\vert.\end{align*} 
Iterating the latter we get for any $ k\ge1$, 
\begin{align}
\sup_{t\ge (k+1)M}|\lambda_{t}-\ell | &\leq \frac{ \tau}{ \left\Vert h \right\Vert_{ 1}} \sum_{j=0}^{k-1} \tau^{j}v^{\xi}_{ (k+1-j)M}+ \frac{ \tau}{ \left\Vert h \right\Vert_{ 1}}\ell \sum_{j=0}^{k-1} \tau^{j} H_{ (k+1-j)M}+ c_{ 0}{H_{M} } \frac{ \tau}{ \left\Vert h \right\Vert}\sum_{j=0}^{k-1} \tau^{j}+ \tau^{k}c_{0}\nonumber\\ 
&\le \frac{ \tau}{ \left\Vert h \right\Vert_{ 1}} \sum_{j=0}^{k-1} \tau^{j}v^{\xi}_{ (k+1-j)M}+ \frac{\tau(c_{ 0}+\ell)}{ \left\Vert h \right\Vert_{ 1}(1- \tau)}{H_{M} }+ \tau^{k}c_{0}, \nonumber
\end{align} as $H$ is non decreasing, which completes the proof.\end{proof}

\subsubsection{Proof of the convergence}
We prove in this paragraph Item 1 of Theorem~\ref{th:conv_rate}, that is the convergence of $ \lambda_{ t}$ to $\ell$ as $t\to\infty$. Since $ \xi_{ t} \xrightarrow[ t\to \infty]{}0$, we have $ \frac{ \tau}{ \left\Vert h \right\Vert_{ 1}} \sum_{j=0}^{k-1} \tau^{j}v^{\xi}_{ (k+1-j)M}\leq \frac{ \tau}{ \left\Vert h \right\Vert_{ 1} \left(1- \tau\right)}v_{ 2M}^{ \xi} \xrightarrow[ M\to\infty]{}0$. Hence, for $t$ large, take $ k+1= \left\lfloor \sqrt{ t}\right\rfloor$ and $ M = \frac{ t}{ k+1}$ so that both $k+1\to\infty$ and $M\to \infty$ as $t\to\infty$ and we have $ \left\vert \lambda_{ t}- \ell \right\vert \xrightarrow[ t\to\infty]{}0$ from \eqref{eq:rate_tauT}, as required.

\subsubsection{A technical lemma}
 We now turn to the proof of the rates of convergence to $\ell$. First we give a technical lemma controlling the order of the first term on the righthand side of \eqref{eq:rate_to_ell}.  It should be noted that for this term to be small, it is enough that one parameter among $k$ or $M$ is large, the other parameter may remain bounded.
 
 \begin{lemma}
 \label{lem:rate_sum_xi} 
Fix $\tau\in(0,1)$, $k\ge 1$ and $M>0$. Then the following holds:
\begin{enumerate}[label=(\alph*)]
\item Exponential decay on $ \xi$: if for some constants $A>0$ and $a>0$, we have $v^{\xi}_{t}=A e^{-at}$, $t\geq0$, it holds if $ e^{ -aM}< \tau$,  that \begin{align}\label{eq:rate_sum_xi_exp}
\sum_{j=0}^{k-1}\tau^{j}v^{\xi}_{ (k+1-j)M}\le  A\frac{ \tau^{k} e^{-aM}}{ \tau e^{ aM}-1}.
 \end{align}  
  \item Polynomial decay on $ \xi$: if for some constants $A>0$ and $a>0$, we have $v^{\xi}_{t}=A t^{-a}$, $t\geq0$,  then it holds for some positive constant $C_1>0$, that
 \begin{align}\label{eq:rate_sum_xi_pol}
 \sum_{j=0}^{k-1}\tau^{j}v^{\xi}_{ (k+1-j)M} \le C_1M^{-a}k^{-a}.
 \end{align}
 Moreover the constant $C_1=C_1(a,\tau)$ given in \eqref{eq:value_C1} depends only on $a$ and $\tau$ and remains bounded as $\tau \to \tau_0= \Vert h_1 \Vert_1 \vert \Phi^\prime(\kappa \ell)\vert\in [0, 1)$.
\end{enumerate}
\end{lemma}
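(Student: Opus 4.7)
The lemma splits naturally into two decay regimes; both reduce to elementary estimates on a discrete convolution with geometric weight. Neither argument is deep, the main care being to keep the constants explicit enough to track their behavior as $\tau$ varies.

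For part (a) the plan is a direct geometric-series calculation. I would substitute $v^\xi_{(k+1-j)M} = A e^{-a(k+1-j)M}$, factor out the $j$-independent exponential, and recognize the remaining sum as a finite geometric series with common ratio $\tau e^{aM}$. Under the hypothesis $e^{-aM} < \tau$, this ratio exceeds $1$, so the sum is bounded by its last term divided by $\tau e^{aM} - 1$; simplifying then reproduces exactly the right-hand side of \eqref{eq:rate_sum_xi_exp}.

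For part (b), after pulling out $A M^{-a}$ the task reduces to estimating $k^a \sum_{j=0}^{k-1} \tau^j (k+1-j)^{-a}$ uniformly in $k \ge 1$. The plan is to split the sum at $j_0 = \lfloor k/2 \rfloor$: for $j \le j_0$ I would bound $(k+1-j)^{-a} \le (k/2)^{-a}$ and the remaining geometric sum by $1/(1-\tau)$, which contributes the explicit constant $2^a/(1-\tau)$; for $j > j_0$ I would use $\tau^j \le \tau^{k/2}$ and handle the residual factor by the trivial bound $k$ (or by a zeta-type sum when $a > 1$), producing a contribution of order $k^{a+1} \tau^{k/2}$ after multiplying by $k^a$. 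Since $k \mapsto k^{a+1} \tau^{k/2}$ attains its maximum on $\mathbb{N}$ near $k \sim 2(a+1)/\log(1/\tau)$ and is summable at infinity, its supremum is finite and explicitly computable in $a$ and $\tau$.

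The only point that warrants genuine attention is the assertion that $C_1(a,\tau)$ remains bounded as $\tau \to \tau_0 \in [0,1)$. This is immediate once the constant is collected explicitly into the form to be recorded as \eqref{eq:value_C1}: the only $\tau$-dependence enters through $1/(1-\tau)$ and through $\sup_k k^{a+1} \tau^{k/2}$, both of which are continuous in $\tau$ on $[0,1)$ and therefore bounded on any compact subinterval. The hard part of the proof is thus not the estimation itself but the bookkeeping needed to keep the constant transparent, which is essential for applying the lemma in the rate statements \eqref{eq:rates_lambda_pol} of Theorem~\ref{th:conv_rate}.
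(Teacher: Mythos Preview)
Your proof is correct. Part (a) is identical to the paper's argument: substitute, sum the finite geometric series with ratio $\tau e^{aM}>1$, and drop the negative term.

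For part (b) your approach is genuinely different and more elementary. The paper locates the minimum $\alpha_k$ of the function $\psi(x)=\tau^x(k+1-x)^{-a}$, splits the sum there, bounds the increasing tail by $\int \tau^x\,{\rm d}x$, and controls the decreasing part by the integral $I_k(a)=\int_0^k \tau^x(k+1-x)^{-a}\,{\rm d}x$, which is then handled via an integration by parts and a further split of the resulting integral at $1/\sqrt{k}$. Your split at $\lfloor k/2\rfloor$ avoids all of this: on the first half you bound $(k+1-j)^{-a}\le (k/2)^{-a}$ and sum the geometric tail, on the second half you bound $\tau^j\le \tau^{k/2}$ and absorb the remaining $O(k)$ factor into the exponentially small prefactor, observing that $\sup_{k\ge1}k^{a+1}\tau^{k/2}$ is finite and explicit. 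Both routes yield a constant that is continuous in $\tau$ on $[0,1)$ and hence bounded near any $\tau_0<1$; your expression, roughly $A\bigl(2^a/(1-\tau)+\sup_k k^{a+1}\tau^{k/2}\bigr)$, is arguably cleaner than the paper's \eqref{eq:value_C1}, at the modest cost of a slightly larger numerical value. The paper's more intricate analysis buys nothing essential for the downstream application in Theorem~\ref{th:conv_rate}, so your argument is a genuine simplification.
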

\begin{proof}[Proof of Lemma \ref{lem:rate_sum_xi}] Since $ e^{ -aM}< \tau$, Inequality \eqref{eq:rate_sum_xi_exp} follows directly from
\begin{align*}
\sum_{j=0}^{k-1}\tau^{j}v^{\xi}_{ (k+1-j)M}
&=Ae^{-aM} \left(\frac{\left(e^{-aM}\right)^{ k}-\tau^{ k}}{1-\tau e^{aM}}\right) \le Ae^{-aM} \frac{\tau^{ k}}{\tau e^{aM}-1}.
\end{align*}

Now turn to Inequality~\eqref{eq:rate_sum_xi_pol}: we write
\begin{align}
\label{eq:vxi_dec_pol}
\sum_{j=0}^{k-1}\tau^{j}v^{\xi}_{ (k+1-j)M}&=AM^{-a}\sum_{j=0}^{k-1}\tau^{j}(k+1-j)^{-a }=AM^{-a}\sum_{j=0}^{k-1} \psi(j)
\end{align}
where $ \psi: x \mapsto \tau^{x}(k+1-x)^{-a }$. A direct calculation shows that $ \psi$ is strictly decreasing (resp. increasing) on $ [0, \alpha_{ k}]$ (resp. on $[ \alpha_{ k}, k+1]$), for $ \alpha_{ k}:= \frac{ (k+1) \log \left(\tau\right)+a}{ \log(\tau)}$ (indeed $ \alpha_{ k} >0$ for large $k$). Decompose the above sum into
\begin{align}
\label{eq:sum_psi}
\sum_{j=0}^{k-1} \psi(j)= \psi(0) + \sum_{ j=1}^{ \left\lfloor \alpha_{ k}\right\rfloor} \psi(j)+ \sum_{ j=\left\lfloor  \alpha_{ k}\right\rfloor+1}^{ k-1} \psi(j)
\end{align}
the second sum being by convention $0$ in case $ \left\lfloor \alpha_{ k}\right\rfloor+1 > k-1$. This sum is non empty when
\begin{align*}
k-1 - \left( \left\lfloor \alpha_{ k}\right\rfloor+1\right) \geq k-1 - \left( \alpha_{ k} +1\right)= \frac{3\log(\tau) +a}{ \log(1/\tau)} >0
\end{align*}
\textit{i.e.,} when $a > 3 \log \left(1/ \tau\right)$. It can be bounded by
\begin{align*}
\sum_{ j= \left\lfloor \alpha_{ k}\right\rfloor+1}^{ k-1} \psi(j) \leq \int_{ \left\lfloor \alpha_{ k}\right\rfloor+1}^{k} \psi(x) {\rm d}x \leq \int_{ \left\lfloor \alpha_{ k}\right\rfloor+1}^{k} \tau^{ x} {\rm d}x= \frac{ \tau^{ k} - \tau^{ \left\lfloor \alpha_{ k}\right\rfloor+1}}{ \log \left(\tau\right)} \leq \frac{ \tau^{ \left\lfloor \alpha_{ k}\right\rfloor+1}}{ \log \left(1/ \tau\right)} \leq \frac{\tau^{\frac{a}{\log(\tau)}+1}}{\log(1/\tau)} \tau^{ k}.
\end{align*}

Turn now to the first sum in \eqref{eq:sum_psi}: since $ \psi$ is decreasing on $[0, \alpha_{ k}]$
\begin{align*}
\sum_{j=1}^{ \left\lfloor \alpha_{ k}\right\rfloor} \psi(j)= \sum_{ j=0}^{ \left\lfloor \alpha_{ k}\right\rfloor-1} \psi(j+1) \leq \int_{ 0}^{ \left\lfloor \alpha_{ k}\right\rfloor-1}  \psi(x) {\rm d}x\leq\int_{ 0}^{ k} \psi(x) {\rm d}x= \int_{ 0}^{k} \frac{ \tau^{ x}}{ \left(k+1- x\right)^{ a}} {\rm d}x.
\end{align*}
 Let us estimate the last integral:
\begin{align}
I_{ k}(a)&:= \int_{ 0}^{k} \frac{ \tau^{ x}}{ \left(k+1- x\right)^{ a}} {\rm d}x= \frac{ 1}{ k^{ a-1}}\int_{ 0}^{1} \frac{ e^{ - k\log \left(1/ \tau\right)y}}{ \left(1+ \frac{ 1}{ k}- y\right)^{ a}} {\rm d}y, \nonumber\\
&= \frac{ 1}{ k^{ a}} \left( \frac{ 1}{\left(1+\frac1k\right)^{a} \log(1/ \tau)}-\frac{ k^{a}\tau^{ k}}{ \log(1/ \tau)} + \frac{ a}{ \log \left(1/ \tau\right)} \int_{ 0}^{1} \frac{ e^{ -k \log \left(1/ \tau\right)y}}{ \left(1+ \frac{ 1}{ k}-y\right)^{ a+1}} {\rm d}y\right)\label{aux:Ika}
\end{align}
by integration by parts. The remaining of the proof is to show that the integral within the last estimate is bounded in $k$:
\begin{align*}
J_{ k}(a)&:=  \int_{ 0}^{1} \frac{ e^{ -k \log \left(1/ \tau\right)y}}{ \left(1+ \frac{ 1}{ k}-y\right)^{ a+1}} {\rm d}y =  \int_{ 0}^{1/ \sqrt{ k}} \frac{ e^{ -k \log \left(1/ \tau\right)y}}{ \left(1+ \frac{ 1}{ k}-y\right)^{ a+1}} {\rm d}y +  \int_{ 1/ \sqrt{ k}}^{1} \frac{ e^{ -k \log \left(1/ \tau\right)y}}{ \left(1+ \frac{ 1}{ k}-y\right)^{ a+1}} {\rm d}y,\\
&\le \frac{ 1}{ \sqrt{ k} \left(1+ \frac{ 1}{ k} - \frac{ 1}{ \sqrt{ k}}\right)^{ a+1}}+ \left(1- \frac{ 1}{ \sqrt{ k}}\right)k^{ a+1}e^{ - \sqrt{ k}\log \left(1/ \tau\right)}
.\end{align*}
From this we deduce that $J_{ k}(a) \xrightarrow[ k\to\infty]{}0$ and hence, coming back to \eqref{aux:Ika}, we deduce that there exists some constant $C=C(a, \tau)>0$, \textit{e.g.,} $C= \frac{1}{\log(1/\tau)}\left(1+ a+a\left(\frac{2(a+1)}{e\log(1/\tau)}\right)^{2(a+1)}\right)$, such that for all large $k$, $ I_{ k}(a)\leq \frac{ C}{ k^{ a}}$. Putting this estimate together with \eqref{eq:sum_psi} into \eqref{eq:vxi_dec_pol}, we obtain \eqref{eq:rate_sum_xi_pol} for the following constant \begin{align}
    \label{eq:value_C1} C_1=A\left(1+\frac{\tau^{\frac{a}{\log(\tau)}+1}}{\log(1/\tau)}\left(\frac{a}{e\log(1/\tau)}\right)^a+ \frac{1}{\log(1/\tau)}\left(1+ a+a\left(\frac{2(a+1)}{e\log(1/\tau)}\right)^{2(a+1)}\right)\right).
\end{align}\qedhere
 \end{proof}
 
\subsubsection{Proof of Theorem \ref{th:conv_rate}}
We compute the rates implied by Proposition \ref{prop:conv_rate} under the different decay constraints on $\xi$ and $H$ for a positive large time $T>t_0$ where $t_0$ is defined in \eqref{eq:prox_lambda_ell}.    In the sequel we select the largest possible values for $k$ and $M$ subject to the constraint $(k+1)M=T$ that give a minimal upper bound for \eqref{eq:rate_to_ell}. Below $A,\, B,\, a$ and $b$ are positive constants. 
 
  \medskip
  
 \noindent \textbf{Case $\xi$ is exponentially decaying.} Suppose that $v^{\xi}_{t}=A e^{-at}$, $t\geq0$, we consider the possible decay behavior for $H$. In addition to $M\geq t_0$ imposed by the assumptions of Proposition~\ref{prop:conv_rate}, we require also that $M$ is large enough so that $e^{-aM}< \frac{\tau}{2}$.  Gathering the results of Proposition \ref{prop:conv_rate} and Lemma \ref{lem:rate_sum_xi} we get
\begin{align}
|\lambda_{(k+1)M}-\ell |
&\le \frac{\tau(\left\Vert \lambda \right\Vert_{ \infty}+2\ell)}{ \left\Vert h \right\Vert_{ 1} \left(1-\tau\right)} H_M+ \tau^k \left(\frac{ A\tau e^{-aM}}{ \left\Vert h \right\Vert_{ 1}(\tau e^{ aM}-1)} +\left\Vert \lambda \right\Vert_{ \infty}+ \ell\right),\nonumber\\
&\le \frac{\tau(\left\Vert \lambda \right\Vert_{ \infty}+2\ell)}{ \left\Vert h \right\Vert_{ 1} \left(1-\tau\right)} H_M+ \tau^k \left(\frac{ A\tau^2}{ 2\left\Vert h \right\Vert_{ 1}} +\left\Vert \lambda \right\Vert_{ \infty}+ \ell\right),\nonumber\\
&\leq  C_2\left(H_{M}+\tau^{k}\right),\quad \mbox{where }C_2:=\frac{\tau(\left\Vert \lambda \right\Vert_{ \infty}+2\ell)}{ \left\Vert h \right\Vert_{ 1} \left(1-\tau\right)}\vee\left(\frac{ A\tau^2}{ 2\left\Vert h \right\Vert_{ 1}} +\left\Vert \lambda \right\Vert_{ \infty}+ \ell\right)\label{eq:rate_to_ell_xiexp}
\end{align} 
so that $\tau\mapsto C_2$ remains bounded as $\tau\to \tau_0= \Vert h \Vert_1 \vert\Phi^{\prime}(\kappa \ell)\vert\in [0,1)$.
 
\medskip
\noindent \textit{In the case $H_{t}\leq B e^{-bt}$}, $t\geq0$, it follows that $|\lambda_{(k+1)M}-\ell |
\le C_2\left(B e^{-bM}+\tau^{k}\right).$ 
Let $T>0$ be large and define $k+1= \left\lfloor \sqrt{T}\sqrt{b/\log(1/\tau)}\right\rfloor$ and $M= \frac{ T}{ k+1}
$ so that $e^{-bM}$ and $\tau^{k}$ are of the same order.  
Consider $T$  such that $M\ge t_0$ and $e^{-aM}< \frac{\tau}{2}$, namely $T\ge b\log(1/\tau)\max\left(t_0^2, \frac{\log^2\left(\frac{2}{\tau}\right)}{a^2} \right)$. 
It follows that, for $C_2$ defined in \eqref{eq:rate_to_ell_xiexp}, \begin{align}
     \label{eq:rate_exp_exp}
    |\lambda_{T}-\ell |
\le C e^{- \sqrt{ b \log \left(1/ \tau\right)}\sqrt{T}},\quad  C:=C_2(B+ 1),\  T\ge \sigma(t_0):=b\log(1/\tau)\left(t_0^2\vee \frac{\log^2\left(\frac{2}{\tau}\right)}{a^2} \right).
 \end{align}
\medskip
\noindent \textit{If $H_{t}=B t^{-b}$}, $t\geq0$, it follows that $|\lambda_{(k+1)M}-\ell |
\le C_2\left(M^{-b}+\tau^{k}\right).$ 
Similarly, let $T>0$ be large and define $k+1= \left\lfloor b/\log(1/\tau)(\log T-\log \log T)\right\rfloor $ and $M= \frac{ T}{ k+1}$, so that $M^{-b}$ and $\tau^{k}$ are of the same order. Consider $T$ such that $M\ge t_0$ and $e^{-aM}< \frac{\tau}{2}$, namely $T\ge \max\left(t_0, \frac{\log\left(\frac{2}{\tau}\right)}{a}\right) b\log(1/\tau)\log T$. 
  It follows that, for $C_2$ defined in \eqref{eq:rate_to_ell_xiexp},\begin{align}
     \label{eq:rate_exp_pol}
    |\lambda_{T}-\ell |\le 2C_2\log(T) ^{b}T^{-b},\quad  T\ge \sigma(t_0):= -c W\left(-\frac1{c}\right) \mbox{ with  }c:=b\log\left(\frac1\tau\right)\left(t_0\vee \frac{\log\left(\frac{2}{\tau}\right)}{a^2} \right),
 \end{align} where $W$ is the Lambert $W$ function.
 
\medskip
\noindent \textit{If $h$ is compactly supported on $[0, S]$,} then $H_{t}=0$ for $t\geq S$. We select $M= S \vee t_{ 0}$, then $k= \left\lfloor T/M\right\rfloor-1$. 
  It follows that, for $C_2$ defined in \eqref{eq:rate_to_ell_xiexp}, \begin{align}
     \label{eq:rate_exp_comp}
     |\lambda_{T}-\ell |\le 2C_2e^{- \log \left(1/ \tau\right) \frac{ T}{ S\vee t_{ 0}}},\quad T\ge \sigma(t_0):=S\vee t_0 .
 \end{align}

\medskip
\noindent    \textbf{Case $\xi$ is polynomially  decaying.} Suppose that $v^{\xi}_{t}=A t^{-a}$, $t>0$, we consider the possible decay behaviors for $H$. Gathering the results of Proposition \ref{prop:conv_rate} and Lemma \ref{lem:rate_sum_xi} we get, for a constant $C_{ 3}=C_{ 3}(a, \tau, \left\Vert \lambda \right\Vert_{ \infty}, h)$ that remains bounded as $ \tau\to \tau_{ 0}$
\begin{align}
\label{eq:lambda_ell_xi_pol}
|\lambda_{(k+1)M}-\ell |& \leq C_{ 3} \left(M^{-a}k^{-a}+ H_{M}+ \tau^{k}\right)\\
     C_3&:= \frac{ \tau C_1}{ \left\Vert h \right\Vert_{ 1}} \vee  \frac{\tau(\left\Vert \lambda \right\Vert_{ \infty}+2\ell)}{ \left\Vert h \right\Vert_{ 1} \left(1-\tau\right)}\vee(\left\Vert \lambda \right\Vert_{ \infty}+ \ell)\label{eq:value_C3},
\end{align}  where $C_1$ is defined in \eqref{eq:value_C1}.

\medskip 
\noindent \textit{If $H_{t}\leq B e^{-bt}\mathbf{1}_{t\ge 0}$},  it follows that $|\lambda_{(k+1)M}-\ell |
\le C_{ 3}\left((kM)^{-a}+e^{-bM} + \tau^{ k}\right)$. 
Set $T>0$ large and define $k+1= \left\lfloor (b/a) T/\log T\right\rfloor$ and $M= \frac{ T}{ k+1}= \frac{ T}{ \left\lfloor (b/a) T/\log T\right\rfloor}$ so that $e^{-bM}$ and $(kM)^{-a}$ are of the same order. We claim that for $T$ large, $ \tau^{ k}$ is negligible w.r.t. the two other terms: indeed, for this choice of parameters, $ \tau^{ k} \leq (kM)^{ -a}$ and $\tau^{k} \leq e^{-bM}$ if $ \frac{ a^{ 2}}{b \log (1/ \tau)} (\log T)^{ 2} \leq T$. Recalling that we assume also that $M\ge t_0$, it follows that, 
for $C_3$ defined in \eqref{eq:value_C3},\begin{align}
     \label{eq:rate_pol_exp}
    |\lambda_{T}-\ell |
\le 3C_3 T^{-a},\quad    T\ge \sigma(t_0):=\max \left(e^{t_0\frac ba},4cW^2\left(\frac{-1}{2\sqrt c}\right)\right), \  \mbox{with }c:=  \frac{ a^{ 2}}{b \log (1/ \tau)},
 \end{align} where $W$ is the Lambert $W$ function.

\medskip
\noindent \textit{If $H_{t}\leq B t^{-b}\mathbf{1}_{t\ge 0}$}, it follows from \eqref{eq:lambda_ell_xi_pol} that 
\begin{equation}
\label{eq:lambda_ell_pol_ab}
|\lambda_{(k+1)M}-\ell |\le C_3\left((kM)^{-a}+M^{-b}+\tau^{k}\right).
\end{equation} 
Similarly, set $\beta=\left(1-\tfrac{a}{b}\right)\vee 0$ and $T>0$ large. Define $k= \left\lfloor \frac{b}{\log(1/\tau)}T^{\beta}(\log T)^{1- \frac{\beta}{1-a/b}}\right\rfloor$ and $M= \frac{ T}{ k+1}$. Below, we compute each term of \eqref{eq:lambda_ell_pol_ab}.

\noindent Suppose that $b\leq a$: then $\beta=0$ and we have $M^{-b}\leq \left(\frac{b}{\log(1/\tau)}\right)^bT^{-b} (\log T)^b$, $\tau^k\leq \tau^{-1} T^{-b}$. 
Using that $M\ge t_0$, \textit{i.e.,} $T\geq t_0 \frac{b}{\log(1/\tau)}\log T$, we get, for $C_3$ defined in \eqref{eq:value_C3}, that\begin{align}
     \label{eq:rate_pol_pol1}
    |\lambda_{T}-\ell |
\le 3C_3 (\log T)^bT^{-b},\quad    T\ge \sigma(t_0):=-cW\left(\frac{-1}{ c}\right), \ \mbox{with }c:=   \frac{t_0b}{\log(1/\tau)},
 \end{align} where $W$ is the Lambert $W$ function.

\noindent Suppose that $a<b$: then $\beta= 1- \frac{a}{b}$ and we have $M^{-b}\leq \left(\frac{b}{\log(1/\tau)}\right)^bT^{-a}$ and $\tau^k\leq \tau^{-1} e^{-b T^{1-a/b}}\leq \tau^{-1} T^{-a}$ as long as $\frac{a}{b}(\log T) \leq T^{1-\frac{a}{b}}$. 
Using also that $M\ge t_0$, that is $T\geq \left(t_0 \frac{b}{\log(1/\tau)}\right)^{\frac{b}{a}}$, we get, for $C_3$ defined in \eqref{eq:value_C3}, that\begin{align}
     \label{eq:rate_pol_pol2}
    |\lambda_{T}-\ell |
\le 3C_3 T^{-a},\quad    T\ge \sigma(t_0):=\left(t_0 \frac{b}{\log(1/\tau)}\right)^{\frac{b}{a}}\vee \left(- \frac{a}{b-a}W\left(1-\frac{b}{a}\right)\right), 
 \end{align} where $W$ is the Lambert $W$ function.

\medskip
\noindent \textit{If  $h$ is compactly supported on $[0, S]$}, then $H_{t}=0$ for $t\ge S$. We select $M=S \vee t_0$, then defining $k+1= \left\lfloor T/M\right\rfloor$ immediately gives the bound for 
$C_3$ defined in \eqref{eq:value_C3},\begin{align}
     \label{eq:rate_pol_comp}
    |\lambda_{T}-\ell |
\le 3C_3 T^{-a},\quad    T\ge \sigma(t_0):=S\vee t_0.
 \end{align} 
 
\medskip
\noindent    \textbf{Case $\xi$ is with compact support.} Suppose that $ \xi_t=0$ for $t> S_\xi$ for some $S_\xi>0$. Then, for $M\geq \frac{S_\xi}{2}\vee t_0$, we obtain from \eqref{eq:rate_to_ell} that
\[|\lambda_{(k+1)M}-\ell |
\le \frac{\tau(\left\Vert \lambda \right\Vert_{ \infty}+2\ell)}{ \left\Vert h \right\Vert_{ 1} \left(1-\tau\right)}{ H_M}+\tau^{k}(\left\Vert \lambda \right\Vert_{ \infty}+ \ell).\]
Hence, we are back to a situation similar to \eqref{eq:rate_to_ell_xiexp} and the result follows.

\subsection{Proof of Theorem~\ref{th:stab_stationary}}
 Start in a similar way as for the proof of Theorem~\ref{th:conv_rate}: write 
 \begin{align*}
\lambda_{t}-\ell &=\Phi\left( \xi_{ t}+ \int_{0}^{t}h(t-u)(\lambda_{u}-\ell) {\rm d}u + \int_{0}^{t}h(u) {\rm d}u \ell \right) - \Phi\left(\int_{0}^{+\infty}h(u)  {\rm d}u \ell\right).
\end{align*}
Using the definition of $ \xi_{ t}= \xi_{ t}^{ eq, \ell} + \eta_{ t}$, we obtain
 \begin{align*}
\lambda_{t}-\ell &=\Phi\left( \eta_{ t}+ \int_{0}^{t}h(t-u)(\lambda_{u}-\ell) {\rm d}u + \int_{0}^{+\infty}h(u) {\rm d}u \ell \right) - \Phi\left(\int_{0}^{+\infty}h(u)  {\rm d}u \ell\right).
\end{align*}
By a Taylor expansion, we obtain
\begin{align}
\label{eq:lambdat_stationary}
\lambda_{t}-\ell &= \Phi^{ \prime}( \kappa\ell) \left(\eta_{ t}+ \int_{0}^{t}h(t-u)(\lambda_{u}-\ell) {\rm d}u \right)+ S_{ t},
\end{align}
where the remainder $S_{ t}$ satisfies
\begin{align}
\label{eq:St}
 \left\vert S_{ t} \right\vert&\leq  \frac{ \left\Vert \Phi^{ \prime \prime} \right\Vert_{ \infty}}{ 2} \left( \eta_{ t}+ \int_{0}^{t}h(t-u)(\lambda_{u}-\ell) {\rm d}u\right)^{ 2}.
\end{align}
Fix now some constants $\varepsilon_0>0$ and $\varepsilon\in (0, \varepsilon_0)$ that we will define later and consider
\begin{equation}
\label{eq:t_ast_stat}
t^{ \ast}:= \inf \left\lbrace t>0,\ \left\vert \lambda_{ t} - \ell \right\vert > \varepsilon\right\rbrace.
\end{equation}
Since $ \lambda_{ 0}= \Phi \left( \int_{ 0}^{+\infty} h(u) {\rm d}u \ell + \eta_{ 0}\right)= \Phi \left( \kappa\ell + \eta_{ 0}\right)$, there exists $ \delta>0$ such that if $ \left\vert \eta_{ 0} \right\vert< \delta$ then $ \left\vert \lambda_{ 0} - \ell\right\vert = \left\vert \Phi( \kappa\ell + \eta_{ 0}) - \Phi( \kappa\ell) \right\vert\leq \|\Phi'\|_{\infty}|\eta_0|\leq \frac{ \varepsilon}{ 2}$ if $\delta \leq \frac{\varepsilon}{2 \| \Phi'\|_{\infty}}$. 

By the previous estimate on $ \left\vert \lambda_{ 0} - \ell \right\vert$, we have $ t^{ \ast}>0$ and for all $ u\in [0, t^{ \ast}]$, $ \left\vert \lambda_{u} - \ell\right\vert \leq \varepsilon$.  
 Then, we obtain from \eqref{eq:St}, for $t\leq t^{ \ast}$,
\begin{align*}
\left\vert S_{ t} \right\vert \leq \frac{ \left\Vert \Phi^{ \prime \prime} \right\Vert_{ \infty}}{ 2} \left( \left\vert \eta_{ t} \right\vert + \varepsilon  \int_{ 0}^{t} \left\vert h(t-u) \right\vert {\rm d} u\right)^{ 2}\leq \frac{ \left\Vert \Phi^{ \prime \prime} \right\Vert_{ \infty}}{ 2} \left( \left\vert \eta_{ t} \right\vert + \varepsilon \left\Vert h \right\Vert_{ 1}\right)^{ 2},
\end{align*}
Using the same argument in the first term of \eqref{eq:lambdat_stationary}, we obtain finally, for $t\leq t^{ \ast}$,
\begin{align*}
\left\vert \lambda_{t}-\ell \right\vert &\leq  
\left( \frac{ \left\vert \eta_{ t} \right\vert}{ \left\Vert h \right\Vert_{ 1}} + \varepsilon \right) \left( \left\Vert h \right\Vert_{ 1}\left\vert \Phi^{ \prime}( \kappa\ell) \right\vert+ \left\Vert h \right\Vert_{ 1}\frac{ \left\Vert \Phi^{ \prime \prime} \right\Vert_{ \infty}}{ 2} \left( \left\vert \eta_{ t} \right\vert + \varepsilon \left\Vert h \right\Vert_{ 1}\right)\right).
\end{align*}
Recalling the hypothesis \eqref{eq:etat_small} made on $ \eta$, recalling also that $ \tau_{ 0}= \left\Vert h \right\Vert_{ 1} \left\vert \Phi^{ \prime}( \kappa\ell) \right\vert$ we obtain that, for $t\leq t_{ \ast}$,
\begin{align}
\left\vert \lambda_{t}-\ell \right\vert &\leq  \left( \frac{ \delta}{ \left\Vert h \right\Vert_{ 1}}+ \varepsilon\right) \left( \tau_{ 0}+ \left\Vert h \right\Vert_{ 1}\frac{ \left\Vert \Phi^{ \prime \prime} \right\Vert_{ \infty}}{ 2} \left( \delta + \varepsilon \left\Vert h \right\Vert_{ 1}\right)\right), \nonumber\\
&= \varepsilon \left( \tau_{ 0}+ \left\Vert h \right\Vert_{ 1}^{ 2}\frac{ \left\Vert \Phi^{ \prime \prime} \right\Vert_{ \infty}}{ 2} \varepsilon\right) +  \delta \left( \frac{ \tau_{ 0}}{ \left\Vert h \right\Vert_{ 1}}+ \frac{ \left\Vert \Phi^{ \prime \prime} \right\Vert_{ \infty}}{ 2} \left( \delta + 2\varepsilon \left\Vert h \right\Vert_{ 1}\right)\right) \label{aux:stab_stat}.
\end{align}
To complete the proof, we calibrate the necessary constraints on $\delta$ and $\varepsilon.$ By \eqref{eq:cond_stab_ell}, $ \rho:= 1- \tau_{ 0}>0$. Define further
\begin{equation*}
\tau_{ 1}= \tau_{ 0}+ \frac{ \rho}{ 3}, \ \text{ and }\ \tau_{ 2}:= \tau_{ 0}+ \frac{ 2 \rho}{ 3}.
\end{equation*}
Hence, by construction, we have
\begin{equation*}
\tau_{ 0}= \left\Vert h \right\Vert_{ 1} \left\vert \Phi^{ \prime}( \kappa\ell) \right\vert \leq \tau_{ 1} < \tau_{ 2}<1.
\end{equation*}
With this notation at hand, let $ \varepsilon_{ 0}>0$ which satisfies 
\begin{equation}
\label{eq:eps0}
\varepsilon<\varepsilon_{ 0} < \frac{2 \left(\tau_{ 1}- \tau_{ 0}\right)}{ \left\Vert \Phi^{ \prime \prime} \right\Vert_{ \infty} \left\Vert h \right\Vert_{ 1}^{ 2}}.
\end{equation}
Consider $ \delta>0$ that verifies 
\begin{equation}
\label{eq:cond_delta}
\delta \leq \min \left(\frac{\varepsilon}{2 \Vert \Phi^\prime \Vert_\infty}, \varepsilon \left\Vert h \right\Vert_{ 1}, \frac{ \varepsilon \left( \tau_{ 2}- \tau_{ 1}\right)}{ \left( \frac{ \tau_{ 0}}{ \left\Vert h \right\Vert_{ 1}} + \frac{ 3\left\Vert \Phi^{ \prime \prime} \right\Vert_{ \infty}}{ 2} \left\Vert h \right\Vert_{ 1}\varepsilon\right)}\right).
\end{equation}
By Condition \eqref{eq:eps0}, the first term in the right hand side of \eqref{aux:stab_stat} is smaller than $ \varepsilon \tau_{ 1}$, and since $ \delta\leq \varepsilon \left\Vert h \right\Vert_{ 1}$ by \eqref{eq:cond_delta}, we have
\begin{align*}
\left\vert \lambda_{t}-\ell \right\vert &\leq  \varepsilon \tau_{ 1} +  \delta \left( \frac{ \tau_{ 0} }{ \left\Vert h \right\Vert_{ 1}}+ \frac{ 3\left\Vert \Phi^{ \prime \prime} \right\Vert_{ \infty}}{ 2} \left\Vert h \right\Vert_{ 1} \varepsilon\right) \leq \varepsilon \tau_{ 2},\ t\leq t_{ \ast},
\end{align*}
where we used again \eqref{eq:cond_delta} in the last inequality. Since by construction $ \varepsilon \tau_{ 2}< \varepsilon$, we necessarily have that (recall \eqref{eq:t_ast_stat}) $t^{\ast}= +\infty$. Therefore for all $t\geq0$, $ \left\vert \lambda_{ t} -\ell\right\vert \leq \varepsilon$. This proves \eqref{eq:stab_equilibrium_eps0}. The rest of the proof consists in applying the results of Theorem~\ref{th:conv_rate}. More precisely, Condition \eqref{eq:prox_lambda_ell} is valid for $t_0=0$. Therefore, we can proceed exactly as in the proof of Theorem~\ref{th:conv_rate} in a simpler setting which allows for some modifications of the proof that we exemplify here: namely, starting again from \eqref{aux:Taylor_lambda}, we see here that the term $\xi_t - \ell \int_t^{+\infty} h(u) {\rm d}u$ is nothing else than $ \eta_t$. In particular, this term is by definition controlled by $\delta \leq \varepsilon_0$: there is no need to use the bound $\left\vert\xi_t - \ell \int_t^{+\infty} h(u) {\rm d}u\right\vert \leq \vert \xi_t\vert + \ell H_t$ as in \eqref{aux:bound_Rt}. Hence, the constraint on $t_0$ imposed by \eqref{aux:constraint_t0} is not needed here and one can take $t_0=0$. Then \eqref{eq:decay_Lambda_unif} is a direct consequence of Theorem~\ref{th:conv_rate}. This concludes the proof of Theorem~\ref{th:stab_stationary}.

\section{Proofs concerning the excitatory case}
\label{sec:proofs_excitatory}
\subsection{Monotonicity: proof of Proposition~\ref{prop:monotone}}
We address the first item of Proposition~\ref{prop:monotone}. 
We split the proof of Proposition~\ref{prop:monotone} into several steps.

\medskip
\noindent

Start with Item~\ref{it:lambda_strictly_increasing}:  suppose that $ \Phi^{ \prime}$ is strictly positive on $ [0, +\infty)$, that \eqref{eq:rho_pos} holds as well as Condition \eqref{eq:cond_rho} for some $ \delta>0$. 

\medskip

\noindent
\textit{Step 1: strict monotonicity on a neighborhood of $0$.}
We prove here that there exists some $ t_{ 0}\in (0, \delta)$ such that 
\begin{equation}
\label{eq:lambda_pos_t0}
\lambda^{ \prime}_{ t}>0 \text{ for all } t\in \left(0, t_{ 0}\right].
\end{equation} 
Start with \eqref{eq:derivative_lambda}: setting $x_{ t}=\xi_{t}+\int_{0}^{t}h(t-u)\lambda_{u}\rmd u$ and $\rho_t:=\rho(h,\xi,t)$ defined in \eqref{eq:rho}, we have
\begin{equation*}
\lambda^{ \prime}_{t}= \Phi^{ \prime}\left(x_{ t}\right)\rho_{ t}+\Phi^{ \prime}\left(x_{ t}\right)\int_{0}^{t}h(t-u)\lambda^{ \prime}_{u}\rmd u.
\end{equation*}
Iterating $n$ times the convolution in the previous equality gives
\begin{align}
\label{eq:lambdaprime_Theta_n}
\lambda^{ \prime}_{ t} &= \Phi^{ \prime}(x_{ t}) \left(\sum_{ k=0}^{ n}  \left( \Theta^{ k} \rho\right)_{ t}\right) + \Phi^{ \prime}(x_{ t}) \left(\Theta^{ n} \left(h\ast \lambda^{ \prime}\right)\right)_{ t}
\end{align}
where $ \Theta$ is the linear operator defined by $ \left( \Theta \varphi\right)_{ t}:= \int_{ 0}^{t} h(t-u) \Phi^{ \prime}(x_{ u}) \varphi_{ u} {\rm d}u$, $t\geq 0$ and we set $\Theta^0=Id$. For any $t_{ 0}\in (0, 1]$, $ \Theta$ is a bounded operator on $L^{ \infty}([0, t_{ 0}])$: 
\begin{equation*}
 \left\Vert \Theta \varphi \right\Vert_{ \infty, [0, t_{ 0}]} \leq \sup_{ u\in [0, 1]} \Phi^{ \prime}(x_{ u}) \int_{ 0}^{t_{ 0}} h(u) {\rm d}u\left\Vert \varphi \right\Vert_{ \infty, [0, t_{ 0}]}.
\end{equation*}
By continuity $ \sup_{ u\in [0, 1]} \Phi^{ \prime}(x_{ u})<\infty$, so that choosing $t_{ 0}\in (0, 1 \wedge \delta]$ sufficiently small such that $ \sup_{ u\in [0, 1]} \Phi^{ \prime}(x_{ u}) \int_{ 0}^{t_{ 0}} h(u) {\rm d}u< 1$, one obtains from \eqref{eq:lambdaprime_Theta_n} that for $t\in (0, t_{ 0}]$
\begin{align*}
\lambda^{ \prime}_{ t} &= \Phi^{ \prime}(x_{ t}) \left(\sum_{ k=0}^{ +\infty}  \left( \Theta^{ k} \rho\right)_{ t}\right) \geq \Phi^{ \prime}(x_{ t}) \rho_{ t}>0
\end{align*}
where we have used \eqref{eq:cond_h} and the positivity of $ \Phi^{ \prime}$ and $ \rho$ and the fact that $ \rho_{ t}>0$ on $ \left(0, \delta\right)$. This proves the claim \eqref{eq:lambda_pos_t0}.

\medskip
 \noindent
 \textit{Step 2: strict monotonicity on arbitrary time intervals:} Fix some $T> t_{ 0}$. Introduce the time $t_{ \ast}:= \inf \left\lbrace t\in (t_{ 0}, T], \lambda^{ \prime}_{ t}\leq 0\right\rbrace$, with the convention that $t_{ \ast}=T$ if $ \lambda_{ t}^{ \prime}>0$ on $[t_{ 0}, T]$. Since $ \lambda_{ t_{ 0}}^{ \prime}>0$, by continuity, $t_{ \ast}>t_{ 0}$ and on $(0, t_{ \ast})$, $ \lambda^{ \prime}_{ t}>0$. We want to prove that $ \lambda^{ \prime}_{  t_{ \ast}}>0$. Since $ \lambda$ is locally bounded, $ \left\vert \xi_{ t}  + \int_{ 0}^{t} h(t-u) \lambda_{ u} {\rm d}u\right\vert \leq \left\Vert \xi \right\Vert_{ \infty, [0, T]} + \left\Vert h \right\Vert_{ 1} \left\Vert \lambda \right\Vert_{ \infty, [0, T]}$. Hence, as $ \Phi$ is $ \mathcal{ C}^{ 1}$ and $ \Phi^{ \prime}>0$, there exists some $ \varepsilon_{ 0}= \varepsilon_{ 0, T}>0$ such $ \inf_{ t\in [0,T]} \Phi^{ \prime} \left(\xi_{ t}  + \int_{ 0}^{t} h(t-u) \lambda_{ u} {\rm d}u\right)> \varepsilon_{ 0}$. We have, on $(0, t_{ \ast})$, for $ \tilde{ h}(u):= \varepsilon_{ 0} h(u)$,
\begin{equation}
\label{eq:conv_ineq_lambdaprime}
\lambda'_{t}\geq \varepsilon_{ 0}\rho_{ t}+\int_{0}^{t} \tilde{ h}(u)\lambda'_{t-u}du = \varepsilon_{ 0} \rho_{ t}+ \left( \tilde{ h} \ast \lambda^{ \prime}\right)_{ t}.
\end{equation}
Note that with no loss of generality, changing $ \varepsilon_{ 0}$ into a smaller one if required, one can assume that $ \varepsilon_{ 0}\in (0,1)$, so that $ \Vert \tilde{ h} \Vert_{ 1}= \varepsilon_{ 0}\in (0,1)$. Iterating the linear convolution within \eqref{eq:conv_ineq_lambdaprime} one obtains, on $(0, t_{ \ast})$,
\begin{equation}
\label{eq:lambdaprime_Upsilon}
\lambda^{ \prime}_{ t}\geq \varepsilon_{ 0} \left(\Upsilon \ast \rho\right)_{ t}
\end{equation}
where
\begin{equation}
\label{eq:Upsilon}
\Upsilon(t) = \sum_{ k=0}^{ +\infty} \tilde{h}^{ \ast k}(t).
\end{equation}
Note that $ \Vert \tilde{h}^{ \ast k} \Vert_{ 1}\leq \varepsilon_{ 0}^{ k}$ and $ \Vert \tilde{h}^{ \ast k} \Vert_{ \infty}\leq \Vert h \Vert_{ \infty} \varepsilon_{ 0}^{ k-1}$ so that the above series converges in $L^{ 1}\cap L^{ \infty}$ and $ \Upsilon$ defines a nonnegative  continuous function on $[0, +\infty)$.

\medskip
\noindent
\textit{Case 1:} suppose here that $h(0)>0$. By continuity, there exists some $ \delta_{h}>0$ such that $ h(t)\geq c_{ h} \mathbf{ 1}_{ [0, \delta_{ h}]}$ for $ c_{ h}:= \frac{ h(0)}{ 2}>0$. Then, an immediate recursion shows that for all $k\geq1$, the support of $  \left(\mathbf{ 1}_{ [0, \delta_{ h}]}\right)^{ \ast k}$ on $[0, T]$ is at least $[0, (k \delta_{ h})\wedge T]$, that is equal to $[0, T]$ for $k$ sufficiently large. We conclude that $ \Upsilon(t)>0$ for all $t\in (0, T]$. 

\medskip
\noindent
\textit{Case 2: } suppose here that $h(t)>0$ for all $t>0$. In this case, keeping only the term for $k=1$ in \eqref{eq:Upsilon} (all the remaining terms are nonnegative), we simply have $ \Upsilon(t)\geq \varepsilon_{ 0} h(t)>0$.

\medskip

Hence, the conclusion of both previous cases is that $ \Upsilon(t)>0$ on $(0, T]$. Then we necessarily have $ \left( \Upsilon \ast \rho\right)_{ t_{ \ast}}=\int_{ 0}^{ t_{ \ast}} \Upsilon(t_{ \ast}-u) \rho_{ u} {\rm d}u>0$, as the integral of the nonnegative continuous function $ u \mapsto \Upsilon(t_{ \ast}-u) \rho_{ u}$ that is strictly positive for $u$ in a neighborhood of $0$, by \eqref{eq:cond_rho}. Hence, by \eqref{eq:lambdaprime_Upsilon}, $ \lambda^{ \prime}_{ t_{ \ast}}>0$. It is therefore impossible to have $t_{ \ast}<T$ (otherwise we would have $ \lambda^{ \prime}_{ t_{ \ast}}=0$ by continuity). Hence, $t_{ \ast}= T$ and $ \lambda^{ \prime}_{ t}>0$ for all $t\in [0,T]$, for all $T>0$. This concludes the proof of Item~\ref{it:lambda_strictly_increasing} of Proposition~\ref{prop:monotone}.

\medskip
\noindent
\textit{Step 3:} Suppose that $ \Phi$ is nondecreasing and that \eqref{eq:rho_pos} holds. Fix some $ \varepsilon\in (0, 1]$ and introduce
\begin{equation}
\label{eq:rhoeps_Phieps}
\begin{cases}
\xi^{ \varepsilon}_{ t}&:= \xi_{ t}+ \varepsilon t,\ t\geq0\\
\Phi^{ \varepsilon}(x)&:= \Phi(x) + \varepsilon(1- e^{ -x}),\ x\geq0,\\
h^{ \varepsilon}(t)&:= h(t) + \varepsilon e^{ -t},\ t\geq 0. 
\end{cases}
\end{equation}
The function $ \xi^{ \varepsilon}$ is locally bounded. Define $ \lambda^{ \varepsilon}$ the corresponding solution to \eqref{eq:conv_gen_lambda} with initial condition $ \xi^{ \varepsilon}$ and driven by the kernels $ \Phi^{ \varepsilon}$ and $ h^{ \varepsilon}$. The corresponding $ \rho^{ \varepsilon}$ is $\rho^{ \varepsilon}_{ t}= \rho \left(h^\varepsilon, \xi^{ \varepsilon}, t\right) =  \rho( h, \xi, t)  + \varepsilon \left(1+ (h(t)+ \varepsilon e^{-t})(1- e^{-\xi_0})+ e^{-t} \Phi(\xi_0)\right)$ so that
\begin{align}
\label{eq:rhot_eps}
0< \varepsilon\leq\rho_{ t} + \varepsilon\leq \rho_{ t}^{ \varepsilon} \leq \rho_{ t} + \varepsilon \left(2+ \left\Vert h \right\Vert_{ \infty}+\Vert\Phi\Vert_{\infty}\right).
\end{align}
Moreover, $ \left(\Phi^{ \varepsilon}\right)^{ \prime}(x)= \Phi^{ \prime}(x) + \varepsilon e^{ -x}\geq \varepsilon e^{ -x}>0$, since $ \Phi$ is nondecreasing. Finally, $h^{ \varepsilon}(t)>0$ for all $t>0$. Therefore, one can apply Step~1 to $ \lambda^{ \varepsilon}$: we have $ \lambda^{ \varepsilon, \prime}_{ t}>0$ for all $t\geq0$. All that remains to prove is that, for fixed $t\geq0$, $ \lambda^{ \varepsilon, \prime}_{ t} \xrightarrow[ \varepsilon\to 0]{} \lambda^{ \prime}_{ t}$. Fix some $T>0$ and for $t\in [0, T]$, first look at $ \Delta_{ t}^{ \varepsilon}:= \lambda_{ t}^{ \varepsilon} - \lambda_{ t}$. We have, 
\begin{align}
 \left\vert \Delta_{ t}^{ \varepsilon} \right\vert&= \left\vert \Phi^{ \varepsilon} \left( \xi_{ t}^{ \varepsilon} + \int_{ 0}^{t} h^{ \varepsilon}(t-s) \lambda^{ \varepsilon}_{ s} {\rm d}s\right) - \Phi \left( \xi_{ t} + \int_{ 0}^{t} h(t-s) \lambda_{ s} {\rm d}s\right) \right\vert \nonumber\\
&\leq  \left\vert \Phi^{ \varepsilon} \left( \xi_{ t}^{ \varepsilon} + \int_{ 0}^{t} h^{ \varepsilon}(t-s) \lambda^{ \varepsilon}_{ s} {\rm d}s\right) - \Phi^{ \varepsilon} \left( \xi_{ t} + \int_{ 0}^{t} h^{ \varepsilon}(t-s) \lambda_{ s} {\rm d}s\right) \right\vert \label{aux:Deltaeps1}\\
&+\left\vert \Phi^{ \varepsilon} \left( \xi_{ t} + \int_{ 0}^{t} h^{ \varepsilon}(t-s) \lambda_{ s} {\rm d}s\right) - \Phi \left( \xi_{ t} + \int_{ 0}^{t} h^{ \varepsilon}(t-s) \lambda_{ s} {\rm d}s\right) \right\vert\label{aux:Deltaeps2}\\
&+\left\vert \Phi \left( \xi_{ t} + \int_{ 0}^{t} h^{ \varepsilon}(t-s) \lambda_{ s} {\rm d}s\right) - \Phi \left( \xi_{ t} + \int_{ 0}^{t} h(t-s) \lambda_{ s} {\rm d}s\right) \right\vert:= A_{ 1}+ A_{ 2} + A_{ 3}.\label{aux:Deltaeps3}
\end{align}
Note that $ \left\vert \Phi^{ \varepsilon} \right\vert_{ Lip}\leq \left\vert \Phi \right\vert_{ Lip}+ \varepsilon\leq \left\vert \Phi \right\vert_{ Lip}+ 1:= c_{ 0}$, so that the term $A_{ 1}$ in \eqref{aux:Deltaeps1} can be bounded by
\begin{equation*}
A_{ 1}\leq 
c_{ 0} \left(\varepsilon t+ \int_{ 0}^{t} h^{ \varepsilon}(t-s) \left\vert \Delta_{ s}^{ \varepsilon} \right\vert {\rm d}s\right).
\end{equation*}
The term $A_{ 2}$ in \eqref{aux:Deltaeps2} is uniformly bounded by $ \varepsilon$, by construction of $ \Phi^{ \varepsilon}$. Finally, by Lipschitz continuity of $ \Phi$, the term $A_{ 3}$ in \eqref{aux:Deltaeps3} is bounded as
\begin{align*}
A_{ 3} \leq
\left\vert \Phi \right\vert_{ Lip} \sup_{ s\in [0, T]} \lambda_{ s} \int_{ 0}^{t} \left\vert h^{ \varepsilon}(t-s) - h(t-s) \right\vert {\rm d}s \leq \varepsilon \left\vert \Phi \right\vert_{ Lip} \sup_{ s\in [0, T]} \lambda_{ s} .
\end{align*}
Hence, we deduce, for some $C_{ T}>0$
\begin{equation*}
\left\vert \Delta_{ t}^{ \varepsilon} \right\vert \leq \varepsilon C_{ T} + \int_{ 0}^{t} h^{ \varepsilon}(t-s) \left\vert \Delta_{ s}^{ \varepsilon} \right\vert {\rm d}s.
\end{equation*}
Applying Lemma~\ref{lem:Gronwall_eps} gives that for all $T>0$, there is some $C_{ T}>0$ such that 
\begin{equation}
\label{eq:bound_Delta_eps}
\sup_{ t\in [0, T]} \left\vert \Delta_{ t}^{ \varepsilon} \right\vert = \sup_{ t\in [0, T]} \left\vert \lambda_{ t}^{ \varepsilon} - \lambda_{ t}\right\vert \leq C_{ T} \varepsilon.
\end{equation}
Now turn to $ \Delta_{ t}^{ \varepsilon, \prime}= \lambda_{ t}^{ \varepsilon, \prime} - \lambda_{ t}^{ \prime}$. From \eqref{eq:derivative_lambda}, we have
\begin{multline}
\label{eq:Delta_prime}
\left\vert \Delta_{ t}^{ \varepsilon, \prime} \right\vert = \Bigg\vert \left(\rho_{ t}^{ \varepsilon}+\int_{0}^{t}h^{ \varepsilon}(t-u)\lambda^{ \varepsilon, \prime}_{u} {\rm d}u\right) \Phi^{ \varepsilon, \prime}\left(\xi_{t}^{ \varepsilon}+\int_{0}^{t}h^{ \varepsilon}(t-u)\lambda^{ \varepsilon}_{u} {\rm d}u\right)\\  - \left(\rho_{ t}+\int_{0}^{t}h(t-u)\lambda^{\prime}_{u} {\rm d}u\right)\Phi^{ \prime}\left(\xi_{t}+\int_{0}^{t}h(t-u)\lambda_{u} {\rm d}u\right) \Bigg\vert.
\end{multline}
By triangular inequality, \eqref{eq:Delta_prime} can be bounded by $(A)+(B)+(C)$, where the three terms are successively defined below. First, using \eqref{eq:rhot_eps} we write
\begin{align*}
(A)&:=  \left\vert\left(\rho_{ t}^{ \varepsilon}+\int_{0}^{t}h^{ \varepsilon}(t-u)\lambda^{ \varepsilon, \prime}_{u} {\rm d}u\right) - \left(\rho_{ t}+\int_{0}^{t}h(t-u)\lambda^{\prime}_{u} {\rm d}u\right) \right\vert \Phi^{ \varepsilon, \prime}\left(\xi_{t}^{ \varepsilon}+\int_{0}^{t}h^{ \varepsilon}(t-u)\lambda^{ \varepsilon}_{u} {\rm d}u\right)\\
&\leq c_{ 0}\left(\varepsilon\left(2+ \left\Vert h \right\Vert_{ \infty}+\Vert\Phi\Vert_{\infty}\right)+\int_{0}^{t}h^{ \varepsilon}(t-u) \left\vert \Delta_{ u}^{ \varepsilon, \prime} \right\vert {\rm d}u + \left\Vert \lambda^{ \prime} \right\Vert_{ \infty, [0, T]} \int_{ 0}^{t} \left\vert h^{ \varepsilon}(u) - h(u) \right\vert {\rm d}u\right)\\
&\leq C_{ T} \left( \varepsilon + \int_{0}^{t}h^{ \varepsilon}(t-u) \left\vert \Delta_{ u}^{ \varepsilon, \prime} \right\vert {\rm d}u\right).
\end{align*}
Second, let
\begin{align*}
(B)&= \left(\rho_{ t}+\int_{0}^{t}h(t-u)\lambda^{\prime}_{u} {\rm d}u\right) \left\vert \Phi^{ \varepsilon, \prime}\left(\xi_{t}^{ \varepsilon}+\int_{0}^{t}h^{ \varepsilon}(t-u)\lambda^{ \varepsilon}_{u} {\rm d}u\right) -  \Phi^{ \varepsilon, \prime}\left(\xi_{t}+\int_{0}^{t}h(t-u)\lambda_{u} {\rm d}u\right) \right\vert\\
& \leq \left\vert \Phi^{ \varepsilon, \prime} \right\vert_{ Lip} \left(\left\Vert \rho \right\Vert_{ \infty, [0, T]} + \left\Vert \lambda^{ \prime} \right\Vert_{ \infty, [0, T]}\left\Vert h \right\Vert_{ 1, [0, T]}\right) \left(\varepsilon t + \int_{ 0}^{t} h^{ \varepsilon}(t-u) \left\vert \Delta_{ u}^{ \varepsilon} \right\vert {\rm d}u + \varepsilon \left\Vert \lambda \right\Vert_{ \infty, [0, T]}\right).
\end{align*}
Noting that $ \left\vert \Phi^{ \varepsilon, \prime} \right\vert_{ Lip} \leq \left\vert \Phi^{ \prime} \right\vert_{ Lip}+ \varepsilon \leq \left\vert \Phi^{ \prime} \right\vert_{ Lip}+ 1<\infty$ and using the a priori bound \eqref{eq:bound_Delta_eps}, we obtain that, for some constant $C_{ T}>0$
\begin{align*}
(B) \leq C_{ T} \varepsilon.
\end{align*}
Finally, set
\begin{align*}
(C)&:= \left(\rho_{ t}+\int_{0}^{t}h(t-u)\lambda^{\prime}_{u} {\rm d}u\right)\left\vert  \Phi^{ \varepsilon, \prime}\left(\xi_{t}+\int_{0}^{t}h(t-u)\lambda_{u} {\rm d}u\right)-  \Phi^{ \prime}\left(\xi_{t}+\int_{0}^{t}h(t-u)\lambda_{u} {\rm d}u\right)\right\vert.
\end{align*}
By definition of $ \Phi^{ \varepsilon}$ in \eqref{eq:rhoeps_Phieps}, we see immediately that there is some other constant $C_{ T}>0$ such that 
\begin{equation*}
(C)\leq C_{ T} \varepsilon.
\end{equation*}
Putting all these estimates together into \eqref{eq:Delta_prime}, we obtain, for some other constant $C_{ T}>0$,
\begin{equation*}
\left\vert \Delta_{ t}^{ \varepsilon, \prime} \right\vert \leq C_{ T} \left( \varepsilon+\int_{0}^{t}h^{ \varepsilon}(t-u) \left\vert \Delta_{ u}^{ \varepsilon, \prime} \right\vert {\rm d}u\right).
\end{equation*}
Another application of Lemma~\ref{lem:Gronwall_eps}  gives that for all $T>0$, $ \sup_{ t\in [0, T]} \left\vert \Delta_{ t}^{ \varepsilon, \prime} \right\vert \leq C_{ T} \varepsilon$ for some different constant $C_{ T}$. Hence $ \lambda^{ \varepsilon, \prime}$ locally converges uniformly to $ \lambda^{ \prime}$ as $ \varepsilon>0$. Since $ \lambda_{ t}^{ \varepsilon, \prime}>0$, we have $ \lambda_{ t}^{ \prime}\geq 0$ for all $t\geq0$, hence the result.

\medskip

\textit{Step 4: }The other cases where $ \lambda$ is nonincreasing, resp. strictly decreasing can be treated similarly. Note that adapting the same approximation procedure to the nonincreasing case requires to introduce $ \xi^{ \varepsilon}_{ t}= \xi_{ t}- \varepsilon t$. We stress that the positivity of $ \xi$ is not required at any point of this proof.
 This concludes the proof of Proposition~\ref{prop:monotone}.

\subsection{Proof of Proposition~\ref{prop:generic_unstable_stable} \label{sec:proof_prop_generic}}

The difference $ \eta_{ t}:= \xi_{ t}^{ \ell_{ 0}}- \xi_{ t}^{ eq, \ell}$ writes $$ \eta_{ t}= \left( \Phi \left(\|h\|_1\ell_{ 0}\right)- \Phi(\|h\|_1\ell)\right) \int_{ t}^{+\infty} h(u) {\rm d}u + \left(\ell_{ 0} - \Phi(\Vert h\Vert_1\ell_{ 0})\right) \chi_{ t}$$ so that we indeed have $ \sup_{ t\geq0} \left\vert \eta_{ t} \right\vert \to 0$ as $ \ell_{ 0}\to \ell$.

For the instability results, we first consider the case \eqref{eq:cond_Phi_unstable1}. Let $ \lambda= \lambda^{ \ell_{ 0}}$ be the solution to \eqref{eq:conv_gen_lambda} driven by $ \xi^{ \ell_{ 0}}$ for $\ell_0\in(\ell,\ell+\delta)$.
By \eqref{eq:cond_Phi_unstable1}, we have $ \lambda_{ 0} = \Phi \left( \xi_{ 0}^{ \ell_{ 0}}\right)= \Phi(\|h\|_1\ell_{ 0}) > \ell_0>\ell$. To study the monotonicity of $\lambda$, compute the quantity in \eqref{eq:cond_rho} in Proposition~\ref{prop:monotone}:
\begin{align*}
(\xi_{ t}^{ \ell_{ 0}}) ^\prime + h(t) \Phi(\xi_{ 0}^{ \ell_{ 0}})= (\ell_{ 0}- \Phi(\|h\|_1\ell_{ 0})) \chi^{ \prime}_{ t},\quad t\ge0.
\end{align*} 
Since for all $ \ell_{ 0}\in (\ell, \ell+ \delta)$, $ \Phi(\|h\|_1\ell_{ 0})> \ell_{ 0}$ and $\chi$ is nonincreasing it holds that  $ (\xi_{ t}^{ \ell_{ 0}}) ^\prime + h(t) \Phi(\xi_{ 0}^{ \ell_{ 0}})\geq 0$ for all $t\geq0$. By Proposition~\ref{prop:monotone}, this means that $ t \mapsto\lambda_{ t}$ is nondecreasing. Therefore $ \lambda_{ t}\geq\lambda_{ 0}> \ell$, and we have the result. In the case \eqref{eq:cond_Phi_unstable2}, a similar argument applies, $ \lambda$ is  nonincreasing so that $ \lambda_{ t}\leq \lambda_{ 0}<\ell$.

For the stability results, we only prove the first point and leave the second to the reader. Assume \eqref{eq:cond_Phi_stable1} and let $ \lambda= \lambda^{ \ell_{ 0}}$ be the solution to \eqref{eq:conv_gen_lambda} driven by $ \xi^{ \ell_{ 0}}$ for $\ell_0\in(\ell,\ell+\delta)$. Adapting previous computations allows to derive that $ \lambda$ is noincreasing. It remains to prove that $ \lambda_{ t}\geq \ell$ for all $t\geq0$. Note that $ \lambda_{ 0}= \Phi \left(\|h\|_1\ell_0\right) > \Phi \left(\|h\|_1\ell\right)=\ell$ by strict monotonicity of $ \Phi$. Defining $ t_{ \ast}:= \inf \left\lbrace t>0, \lambda_{ t}< \ell=\Phi(\Vert h\Vert_1 \ell)\right\rbrace$, we have by continuity that $t_{ \ast}>0$. 
Since $ \Phi$ is strictly monotone and $ \lambda_{t}= \Phi \left(x_{ t}\right)$, where $x_t=\xi^{\ell_0}_t+\int_0^th(t-s)\lambda_s\rmd s$, we also have $ t_{ \ast}:= \inf \left\lbrace t>0, x_{ t}< \|h\|_1\ell\right\rbrace$. Suppose by contradiction that $t_{ \ast}<+\infty$. In this case, we have by continuity $x_{ t_{ \ast}}=\|h\|_1\ell$. Then, we have
\begin{align*}
    x_{ t_{ \ast}}- \|h\|_1\ell&= (\Phi(\|h\|_1\ell_0)-\ell)\int_{t_\ast}^\infty h(u)\rmd u+ \int_{ 0}^{t_{ \ast}} h(t_{ \ast}-s) \left(\lambda_{ s}- \ell\right) {\rm d}s+(\ell_0-\Phi(\|h\|_1\ell_0))\chi_t\geq0.
\end{align*} By strict monotonicity of $\Phi$, $\Phi(\|h\|_1\ell_0)-\ell)\int_{t_\ast}^\infty h(u)\rmd u= \Phi(\|h\|_1\ell_0)-\Phi(\|h\|_1\ell))\int_{t_\ast}^\infty h(u)\rmd u\geq 0$. Using that on $[0, t_{ \ast}]$, $ \lambda_{ s} \geq \ell$, we also have $\int_{ 0}^{t_{ \ast}} h(t_{ \ast}-s) \left(\lambda_{ s}- \ell\right) {\rm d}s\geq 0$. Finally, $(\ell_0-\Phi(\|h\|_1\ell_0))\chi_t\geq 0$ by \eqref{eq:cond_Phi_stable1}. This quantity is moreover strictly positive as $\chi$ is strictly decreasing starting from $\chi_0>0$ with $\chi_t \xrightarrow[t\to\infty]{}0$. Hence $ x_{ t_{ \ast}}>\|h\|_1 \ell$ which is a contradiction. Hence $t_{ \ast}= +\infty$ and $ \lambda_{ t}\geq \ell$ for all $ t\geq0$. Since $ \lambda$ is nonincreasing, $ \lambda_{ t}$ converges as $t\to\infty$ to some fixed-point of $ \Phi$ (Proposition~\ref{prop:conv_X_lambda}) within $[\ell, \ell+ \delta]$, that is necessarily equal to $\ell$ by \eqref{eq:cond_Phi_stable1}.

\subsection{Proof of Theorem~\ref{th:supercritical_unstable2}\label{sec:proof_th_critical_unstable2}}

We only consider the first case and leave the second to the reader. Denote by $ \lambda_{ t}= \Phi \left(x_{t}\right)$ the solution to \eqref{eq:conv_gen_lambda} with source term $ \xi_{ t}$ where $ x_{ t}= \xi_{ t}^{ eq, \ell} + \eta_{ t} + \int_{ 0}^{t} h(t-s) \lambda_{ s} {\rm d}s$. We have $ \lambda_{ 0}= \Phi \left(\Vert h\Vert_1\ell + \eta_{ 0}\right)> \Phi(\Vert h\Vert_1\ell)=\ell$ by strict monotonicity of $ \Phi$. Let $ t_{ \ast}= \inf \left\lbrace t>0,\ \lambda_{ t}< \ell\right\rbrace$. By continuity, $t_{ \ast}>0$ and on $[0, t_{ \ast}]$, $ \lambda_{ t}\geq \ell$. Suppose by contradiction that $ t_{ \ast}< +\infty$ so that by continuity, $ \lambda_{ t_{\ast}}= \ell$. Then, $x_{ t_{ \ast}}- \Vert h\Vert_1\ell= \eta_{ t_{ \ast}} + \int_{ 0}^{t_{ \ast}} h(t_{ \ast}-s) \left(\lambda_{ s} - \ell\right) {\rm d}s>0$, as $ \eta_{ t}\geq0$ and $ s \mapsto h(t_{ \ast}- s) \left(\lambda_{ s} -\ell\right)$ is continuous and verifies in $s=0$, $h(t_{ \ast}-0) \left(\lambda_{ 0}- \ell\right)>0$. Hence $x_{ t_{ \ast}}> \Vert h \Vert_1\ell$, which contradicts the fact that $ \lambda_{ t_{ \ast}}= \ell$ so that $ x_{ t_{ \ast}}= \Phi^{ -1} \left(\lambda_{ t_{ \ast}}\right)= \Phi^{ -1} \left(\ell\right)= \Vert h\Vert_1\ell $. Therefore $t_{ \ast}= +\infty$ and $ \lambda_{ t}\geq\ell$ for all $t\geq0$. \\

Proceed now by contradiction and suppose that $ \lambda_{ t} \to \ell$ as $t\to\infty$. By Proposition~\ref{prop:conv_X_lambda}, 
\begin{equation*}
    y_t:= \frac{1}{\Vert h\Vert_1}\left( \eta_{ t}+ \int_{ 0}^{t} h(t-s) \left(\lambda_{ s} -\ell\right){\rm d}s\right) + \ell\geq \ell
\end{equation*} converges as well to $\ell$ as $t\to\infty$. Therefore, there exists some $t_{ 0}\geq0$ such that for all $t\geq t_{ 0}$, $ y_{ t}\in [\ell,\ell+ \delta)$ where $(\ell- \delta, \ell+ \delta)$ is the domain where $ \Psi$ is convex-concave. Fix now some arbitrary $ \varepsilon>0$ such that $ \tau:= \Vert h\Vert_1\Phi^{ \prime}(\Vert h\Vert_1\ell)- \varepsilon>1$. Since $ \Vert h\Vert_1\Phi^{ \prime}\left(\Vert h \Vert_1 y_{ t}\right) \xrightarrow[ t\to\infty]{} \Vert h\Vert_1\Phi^{ \prime}(\Vert h\Vert_1\ell)>1$, taking $t_{ 0}\geq0$ even larger, one can furthermore suppose that $ \Psi^{\prime}(y_t)= \Vert h\Vert_1\Phi^{ \prime}( \Vert h\Vert_1 y_{ t})> \tau$ for all $t\geq t_{ 0}$. By concavity of $ \Psi$ on $[\ell, \ell+ \delta)$, one has for $t\geq t_{ 0}$, $  \tau \left(y_{ t}- \ell\right)< \Psi^{ \prime}(y_{ t})(y_{ t}-\ell)\leq \Psi(y_{ t}) - \Psi(\ell) $. Hence denoting by $ \nu_{ t}:= \lambda_{ t}- \ell= \Psi(y_{ t})- \Psi(\ell)\geq 0$, we obtain that, for $t\geq t_{ 0}$
\begin{align}
\label{eq:nu_ineq_conv}
\nu_{ t} \geq \tau (y_t-\ell)=\frac{\tau}{\Vert h\Vert_1} \left(\eta_{ t} + \int_{ 0}^{t} h(t-s) \nu_{ s} {\rm d}s\right) = \tilde{\eta}_{ t} + \int_{ 0}^{t} \tilde{ h}(t-s) \nu_{ s} {\rm d}s,
\end{align}
where we have used the notation $ \tilde{ \eta}_{ t}:= \tau \frac{\eta_{ t}}{\Vert h\Vert_1}$ and $ \tilde{ h}(t):= \tau \frac{h(t)}{\Vert h\Vert_1}$ so that $ \Vert \tilde{ h} \Vert_{ 1}= \tau>1$. Let us write 
\begin{equation}
\label{eq:Rt_nut}
R(t)= \nu_{ t}  - \int_{ 0}^{t-t_{ 0}} \tilde{ h}(t-s) \nu_{ s} {\rm d}s,\quad \forall t\ge t_{0}.
\end{equation}
We have from \eqref{eq:nu_ineq_conv} and the fact that $\tilde \eta_{s}\geq 0$ and $ \nu_{ s}\geq0$ on $[0, +\infty)$ that $R(t)\geq0$ for all $t\geq t_{ 0}$. Moreover, $t\mapsto R(t)$ is continuous, non identically $0$ and writing
\begin{equation*}
R(t)= \nu_{ t} - \int_{ t_{ 0}}^{t} \tilde{ h}(s) \nu_{ t-s} {\rm d}s,
\end{equation*}
we obtain since by assumption $ \nu_{ s} = \lambda_{ s}-\ell \xrightarrow[ s\to\infty]{}0$, by dominated convergence theorem that $R(t) \xrightarrow[ t\to\infty]{}0$. With this notation at hand, we obtain from \eqref{eq:Rt_nut} the equality: for all $t\geq t_{ 0}$
\begin{equation*}
\nu_{ t} = R(t) + \int_{ 0}^{t-t_{ 0}} \tilde{ h}(t-s) \nu_{ s} {\rm d}s.
\end{equation*}
We are now in position to use the result of Feller \cite[Theorem 3 (iii)]{feller1941} (see also \cite{Brauer1975}) to conclude that since $ \Vert \tilde{ h} \Vert_{ 1}>1$, we have that $ \nu_{ t} \xrightarrow[ t\to\infty]{}+\infty$, which contradicts the initial statement that $ \lambda_{ t} \xrightarrow[ t\to\infty]{}\ell$. Hence, $ \lambda_{ t}$ does not converge to $\ell$ as $t\to\infty$.

\subsection{Proof of Theorem~\ref{th:conv_empty}\label{sec:proof_th_empty}}
We use similar arguments as before. Consider $ \lambda= \lambda^{ \xi}$ the solution to \eqref{eq:conv_gen_lambda} with source term $ \xi$ satisfying Assumption~\ref{ass:xi_empty}. Note that under the present hypothesis, the solution $ \lambda$ is strictly increasing, by Proposition~\ref{prop:monotone}. Denote also by $x_{ t}= x_{ t}^{ \xi}= \xi_{ t} + \int_{ 0}^{t} h(t-u) \lambda_{ u} {\rm d}u$. Denote by $ t_{ 1}= \inf \left\lbrace t>0,\ x_{ t} >  \left\Vert h \right\Vert_{ 1}\ell_{ 1}\right\rbrace$. Note that $ x_{ 0}= \xi_{ 0} = \left\Vert h \right\Vert_{ 1} \ell_{ 0}< \left\Vert h \right\Vert_{ 1}\ell_{ 1}$, so $t_{ 1}>0$ by continuity. Suppose by contradiction that $t_{ 1}<+\infty$. By continuity, we then have $ x_{ t_{ 1}}= \left\Vert h \right\Vert_{ 1}\ell_{ 1}$. In particular, $ \lambda_{ t_{ 1}}= \Phi(x_{ t_{ 1}})= \Phi( \left\Vert h \right\Vert_{ 1}\ell_{ 1})=\ell_{ 1}$. Since $ \lambda$ is strictly increasing, by strict monotonicity of $ \Phi$, so is $x$. In particular, for all $u\in[0, t_{ 1})$, $x_{ u}< \left\Vert h \right\Vert_{ 1}\ell_{ 1}$ and $ \lambda_{ u}< \ell_{ 1}$. Consequently, $u\in[0,t_{1}]\mapsto h(t_{1}-u)(\lambda_{u}-\ell_{1})$ is continuous on $[0,t_{1}]$ and is nonpositive. It is also nonzero in both cases of \eqref{eq:cond_h}:  indeed, if $h(0)>0$, $h(t_{ 1}-u)$ remains strictly positive in a neighborhood of $t_{ 1}$ and since $ \lambda_{ u}- \ell_{ 1}<0$ on $V$, we have the result. In case $h(t)>0$ for all $t>0$, $h(t_{1}-u)(\lambda_{u}-\ell_{1})<0$ in a neighborhood of $u>0$. Hence, in both cases,
\begin{align*}
\int_{0}^{t_{1}} h(t_{1}-u)\lambda_{u} {\rm d}u<  \int_{0}^{t_{1}} h(t_{1}-u) {\rm d}u\ell_{1}.
\end{align*}
Hence, we deduce that $ \left\Vert h \right\Vert_{ 1}\ell_{ 1}= x_{ t_{ 1}}= \xi_{ t_{ 1}} + \int_{ 0}^{t_{ 1}} h(t_{ 1}-u) \lambda_{ u} {\rm d}u < \xi_{ t_{ 1}} + \ell_{ 1}\int_{ 0}^{t_{ 1}} h(u) {\rm d}u$, implying that
\begin{equation*}
\ell_{ 1} \int_{ t_{ 1}}^{+\infty} h(u) {\rm d}u < \xi_{ t_{ 1}}.
\end{equation*}
But this is in direct contradiction with Assumption~\ref{ass:xi_empty} since integrating $- \xi_{ t}^{ \prime} \leq h(t) \Phi \left(\xi_{ 0}\right)$ gives $ \xi_{ t} \leq \Phi(\xi_{ 0}) \int_{ t}^{+\infty} h(u) {\rm d}u= \Phi(\left\Vert h \right\Vert_{ 1}\ell_{ 0}) \int_{ t}^{+\infty} h(u) {\rm d}u\leq \Phi(\left\Vert h \right\Vert_{ 1}\ell_{ 1}) \int_{ t}^{+\infty} h(u) {\rm d}u= \ell_{ 1}\int_{ t}^{+\infty} h(u) {\rm d}u$. Hence $t_{ 1}=+\infty$ so that $ x_{ t}\leq \left\Vert h \right\Vert_{ 1} \ell_{ 1}$ for all $t\geq0$. By monotonicity of $ \Phi$, $ \lambda_{ t} = \Phi \left(x_{ t}\right)\leq \Phi \left(\left\Vert h \right\Vert_{ 1}\ell_{ 1}\right)=\ell_{ 1}$. In particular $ \lambda$ is bounded and we conclude from Proposition~\ref{prop:conv_X_lambda} that $ \lambda_{ t} $ converges in an increasing way to some fixed point $ \ell= \Phi \left(\left\Vert h \right\Vert_{ 1} \ell\right)$. Since $ \lambda\leq \ell_{ 1}$ and $\ell_{ 1}$ is the lowest of such fixed-point, $ \ell=\ell_{ 1}$. The rest of the proof consists in applying Theorem~\ref{th:conv_rate} in the case \eqref{eq:rates_lambda_exp} as $ \xi^\emptyset$ satisfies $\left(\mathcal{P}_{a, A}\right)$ of Assumption~\ref{ass:decay} for any rate $a>0$.

\appendix

\section{Global estimates for boundedness}
\label{sec:appendix_boundedness}
We gather in this section all the results and proofs related to the boundedness of solutions to \eqref{eq:conv_gen_lambda} that is the subject of the discussion in Sections~\ref{sec:criticality_intro} and~\ref{sec:boundedness_intro}.

\subsection{Proof of Proposition~\ref{prop:subcritical_Lip}}
\label{sec:strong_subcrit} Uniqueness of a solution $\ell$ to \eqref{eq:fixed_point_lambda} follows directly from the contractivity hypothesis \eqref{eq:subcritical_Lip}. By Lipschitz continuity of $ \Phi$,
  \begin{align*}
  \lambda_{ t} &\leq \Phi(0) + \left\vert \Phi \right\vert_{ Lip} \left( \left\Vert \xi \right\Vert_{ \infty} + \int_{ 0}^{t} \left\vert h(t-s) \right\vert \lambda_{ s} {\rm d}s\right)\leq \Phi(0) + \left\vert \Phi \right\vert_{ Lip}  \left\Vert \xi \right\Vert_{ \infty} + \left\vert \Phi \right\vert_{ Lip} \left\Vert h \right\Vert_{ 1}
 \sup_{ s\in [0, t]} \lambda_{ s}   
 \end{align*}
 so that by \eqref{eq:subcritical_Lip}, $ \sup_{ s \geq0} \lambda_{ s} \leq \frac{ \Phi(0) + \left\vert \Phi \right\vert_{ Lip}  \left\Vert \xi \right\Vert_{ \infty}}{ 1- \left\vert \Phi \right\vert_{ Lip} \left\Vert h \right\Vert_{ 1}}$: $ \lambda$ is bounded. Prove now that $ \lambda_{ t}$ converges to $\ell$ as $t\to\infty$. Let $h\geq0$ and $s \geq t$. Denote by $ \Delta(s,h)= \left\vert \lambda_{ s+h}- \lambda_{ s} \right\vert$. We have,
 \begin{align}
 \Delta(s,h)&\leq \left\vert \Phi \right\vert_{ Lip} \left\vert \xi_{ s+h} - \xi_{ s}\right\vert + \left\vert \Phi \right\vert_{ Lip} \int_{ s}^{s+h} \left\vert h(u) \right\vert \lambda_{ s+h-u} {\rm d}u + \left\vert \Phi \right\vert_{ Lip}\int_{ 0}^{s} \left\vert h(u) \right\vert \left\vert \lambda_{ s+h-u} - \lambda_{ s-u}\right\vert {\rm d}u\nonumber\\
 &\leq \left\vert \Phi \right\vert_{ Lip} \left\vert \xi_{ s+h} - \xi_{ s}\right\vert + \left\vert \Phi \right\vert_{ Lip} \left\Vert \lambda \right\Vert_{ \infty}\int_{ s}^{s+h} \left\vert h(u) \right\vert {\rm d}u + \left\vert \Phi \right\vert_{ Lip}\int_{ 0}^{s} \left\vert h(s-u) \right\vert \left\vert \lambda_{ u+h} - \lambda_{u}\right\vert {\rm d}u .\label{aux:Delta_s_h}
 \end{align}
 Concentrate on the last integrand: 
 \begin{align*}
  \int_{ 0}^{s} \left\vert h(s-u) \right\vert \left\vert \lambda_{ u+h} - \lambda_{u}\right\vert {\rm d}u&=  \int_{ 0}^{t} \left\vert h(s-u) \right\vert \left\vert \lambda_{ u+h} - \lambda_{u}\right\vert {\rm d}u +  \int_{ t}^{s} \left\vert h(s-u) \right\vert \left\vert \lambda_{ u+h} - \lambda_{u}\right\vert {\rm d}u,\\
  &\leq 2 \left\Vert \lambda \right\Vert_{ \infty} \int_{ s-t}^{s} \left\vert h(u) \right\vert {\rm d}u + \sup_{ u\geq t} \Delta(u,h) \int_{ 0}^{s-t} \left\vert h(v) \right\vert {\rm d}v.
 \end{align*}
 Putting this estimate into \eqref{aux:Delta_s_h} and taking $\limsup_{ s\to \infty}$ on both sides, we obtain
 \begin{align*}
 \limsup_{ s\to\infty} \left\vert \lambda_{ s+h} - \lambda_{ s}\right\vert \leq \limsup_{ u\to\infty} \left\vert \lambda_{ u+h} - \lambda_{ u}\right\vert \left\Vert \Phi \right\Vert_{ Lip}\left\Vert h \right\Vert_{ 1}.
 \end{align*}
Since $\lambda$ is bounded, $\limsup_{ s\to\infty} \left\vert \lambda_{ s+h} - \lambda_{ s}\right\vert<+\infty$ and by \eqref{eq:subcritical_Lip}, one has  $\limsup_{ s\to\infty} \left\vert \lambda_{ s+h} - \lambda_{ s}\right\vert=0$ for all $h\geq0$ so that $ \lambda_{ t}$ converges as $t\to\infty$ and its limit is necessarily the unique fixed-point $\ell$ to \eqref{eq:fixed_point_lambda}.

\subsection{Proof of Proposition~\ref{prop:global_subcrit}}
\label{sec:global_subcrit}
Choose some $K>0$ sufficiently large and $ r<1$ so that $ \Phi(x) \left\Vert h \right\Vert_{ 1}< r  \left\vert x \right\vert$ as long as $ \left\vert x \right\vert\geq K$. For $x_{ t}= \xi_{ t} + \int_{ 0}^{t} h(t-s) \lambda_{ s} {\rm d}s$, we have
\begin{align*}
\left\vert x_{ t} \right\vert&\leq \left\vert \xi_{ t} \right\vert + \int_{ I_{ 1}} \left\vert h(t-s) \right\vert \Phi \left(x_{ s}\right) {\rm d}s +  \int_{ I_{ 2}} \left\vert h(t-s) \right\vert \Phi \left(x_{ s}\right) {\rm d}s,
\end{align*}
where $I_{ 1}= \left\lbrace u\in [0, t],\ \left\vert x_{ u} \right\vert>K\right\rbrace$ and $I_{ 2}= \left\lbrace u\in [0, t],\ \left\vert x_{ u} \right\vert \leq K\right\rbrace$. By continuity of $ \Phi$, there exists some $L>0$ such that $\sup_{ u\in I_{ 2}} \Phi(x_{ u})\leq L$. Therefore,
\begin{align*}
\left\vert x_{ t} \right\vert&\leq \left\Vert \xi \right\Vert_{ \infty} + r \int_{ I_{ 1}} \left\vert h(t-s) \right\vert \left\vert x_{ s} \right\vert{\rm d}s +  L \int_{ I_{ 2}} \left\vert h(t-s) \right\vert{\rm d}s,\\
&\leq \left\Vert \xi \right\Vert_{ \infty} + r \sup_{ s\in [0, t]} \left\vert x_{ s} \right\vert\int_{ I_{ 1}} \left\vert h(t-s) \right\vert {\rm d}s +  L \int_{ I_{ 2}} \left\vert h(t-s) \right\vert{\rm d}s,\\
&\leq \left\Vert \xi \right\Vert_{ \infty} + r \sup_{ s\in [0, t]} \left\vert x_{ s} \right\vert \left\Vert h \right\Vert_{ 1} +  L \left\Vert h \right\Vert_{ 1}.
\end{align*}
Taking the supremum on $[0, T]$ on both sides, we obtain
\begin{equation*}
\sup_{ t\in[0, T]} \left\vert x_{ t} \right\vert \leq \frac{  \left( \left\Vert \xi \right\Vert_{ \infty} + L \left\Vert h \right\Vert_{ 1}\right)}{ 1- r \left\Vert h \right\Vert_{ 1}}.
\end{equation*}
As this bound is uniform in $T$, we have the result: $(x_{ t})_{ t\geq0}$ is bounded and so is $ \lambda$.

\subsection{Optimality of the global subcritical condition}
\label{sec:optimal_boundedness}
The main contribution of this section is to show that Condition \eqref{eq:global_subcrit} is essentially optimal for the boundedness of $ \lambda$, at least in the case where $h$ is nonnegative. Start with a result in the supercritical case:
\begin{proposition}
\label{prop:global_supercrit}
Suppose that $ \Phi$ is Lipschitz continuous, $ \mathcal{ C}^{ 1}$ with $\Phi^\prime$ Lipschitz continuous and $ \Phi^{ \prime}>0$. Suppose that $h$ is nonnegative, continuous and integrable on $[0, +\infty)$ and verifies \eqref{eq:cond_h}. Suppose that the set of fixed-points of $ \Phi$, $ \mathcal{ P}:= \left\lbrace\ell\geq0, \Phi \left( \left\Vert h \right\Vert_{ 1} \ell\right)=\ell\right\rbrace$ is bounded by above. Suppose also
\begin{equation}
\label{eq:global_supercrit}
\limsup_{ x\to +\infty} \frac{ \Phi(x)}{ x} \left\Vert h \right\Vert_{ 1}>1.
\end{equation}
Consider a source term $ \xi$ of the form  $\xi_{ t}= \ell_{ 0}\int_{ t}^{ +\infty} h(u) {\rm d}u,\ t\geq0$ as in \eqref{eq:xi_mono}. Then, there exists some sufficiently large $ \ell_{ 0}$ such that the corresponding solution $ \lambda= \lambda^{ \xi}$ to \eqref{eq:conv_gen_lambda} is such that $ \lim_{ t\to+\infty} \lambda_{ t}= +\infty$.
\end{proposition}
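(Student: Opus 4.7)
The plan is threefold: first, use \eqref{eq:global_supercrit} together with the boundedness of $\mathcal{P}$ to select $\ell_0$ large enough that $\Phi(\Vert h\Vert_1 \ell_0) > \ell_0$; second, leverage Proposition~\ref{prop:monotone} to show that the corresponding $\lambda^{\xi}$ is strictly increasing; third, conclude by contradiction via Proposition~\ref{prop:conv_X_lambda} that a bounded monotone solution would have to converge to a fixed-point strictly above $\sup \mathcal{P}$, which is impossible. More precisely, introduce $g(\ell) := \Phi(\Vert h\Vert_1 \ell) - \ell$, whose zero set is $\mathcal{P}$, and denote $\ell^{\ast} := \sup \mathcal{P} < +\infty$. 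By continuity, $g$ has constant sign on the connected interval $(\ell^{\ast}, +\infty)$. The case $g < 0$ throughout would force $\Phi(x)/x < 1/\Vert h\Vert_1$ for every $x > \Vert h\Vert_1 \ell^{\ast}$, contradicting \eqref{eq:global_supercrit}. Hence $g > 0$ on $(\ell^{\ast}, +\infty)$ and any $\ell_0 > \ell^{\ast}$ fulfills $\Phi(\Vert h\Vert_1 \ell_0) > \ell_0$.

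Fix such an $\ell_0$. The source term $\xi_t = \ell_0 \int_t^{+\infty} h(u)\, \rmd u$ is of class $C^{1}$, with $\xi_0 = \Vert h\Vert_1 \ell_0$ and $\xi_t' = -\ell_0 h(t)$, so the quantity appearing in \eqref{eq:rho_pos}--\eqref{eq:cond_rho} and defined in \eqref{eq:rho} reduces to
\[
\rho(h, \xi, t) = h(t)\bigl(\Phi(\Vert h\Vert_1 \ell_0) - \ell_0\bigr), \qquad t \geq 0.
\]
By the previous step and nonnegativity of $h$, we have $\rho \geq 0$, and assumption \eqref{eq:cond_h} ensures $\rho > 0$ on some $(0, \delta)$. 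Proposition~\ref{prop:monotone}~(b) thus applies and $\lambda = \lambda^{\xi}$ is strictly increasing on $[0, +\infty)$. Note in particular that $\lambda_0 = \Phi(\xi_0) = \Phi(\Vert h\Vert_1 \ell_0) > \ell_0 > \ell^{\ast}$.

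Suppose now, for contradiction, that $\lambda$ is bounded. Monotonicity then yields $\lambda_t \to L$ for some finite $L \geq \lambda_0 > \ell^{\ast}$. Since $\xi$ is bounded with $\xi_t \to 0$ (by integrability of $h$), Proposition~\ref{prop:conv_X_lambda} forces $L$ to solve $L = \Phi(\Vert h\Vert_1 L)$, i.e. $L \in \mathcal{P}$, contradicting $L > \sup \mathcal{P}$. Therefore $\lambda$ is unbounded, and being nondecreasing it diverges: $\lambda_t \to +\infty$. The only step requiring genuine care is the sign analysis of $g$ on $(\ell^{\ast}, +\infty)$; once that is in place, the remainder is a direct assembly of the monotonicity (Proposition~\ref{prop:monotone}) and identification-of-limits (Proposition~\ref{prop:conv_X_lambda}) results already established.
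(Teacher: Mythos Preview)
Your proof is correct and follows the same strategy as the paper: choose $\ell_0 > \sup\mathcal{P}$ with $\Phi(\Vert h\Vert_1\ell_0) > \ell_0$, apply Proposition~\ref{prop:monotone} to obtain strict monotonicity of $\lambda$, and conclude via Proposition~\ref{prop:conv_X_lambda} that a bounded monotone solution would converge to a fixed point strictly above $\sup\mathcal{P}$, which is impossible. Your sign analysis of $g$ on $(\ell^{\ast},+\infty)$ is in fact a slightly more careful justification for the selection of $\ell_0$ than the paper's direct invocation of the $\limsup$ condition, but the overall argument is identical.
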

Of course, Condition \eqref{eq:global_supercrit} can be  compatible with the existence of convergent solutions $ \lambda$ (it is for example the case when \eqref{eq:fixed_point_lambda_pos} has a fixed-point $\ell$ such that $ \left\Vert h \right\Vert_{ 1} \Phi^{ \prime} \left( \left\Vert h \right\Vert_{ 1}\right)<1$ in which case Theorems~\ref{th:stab_stationary} and~\ref{th:conv_rate} apply). 
\begin{proof}[Proof of Proposition~\ref{prop:global_supercrit}]
By \eqref{eq:global_supercrit}, there exists some $K>0$ and some $ r>1$ such that for all $ \ell_{ 0}\geq K$, $ \Phi(\ell_{ 0} \left\Vert h \right\Vert_{ 1}) \geq r \ell_{ 0} $. Fix then some $ \ell_{ 0} > \max \left(\sup \mathcal{ P}, K\right)$. For such $ \ell_{ 0}$ and $ \xi_{ t}$ given by \eqref{eq:xi_mono}, compute $ \rho(h, \xi, t)$ given by \eqref{eq:rho}: using Remark~\ref{rem:monotone_empty_case} (see in particular \eqref{eq:rho_particular_case}) we have
  \begin{align*}
  \rho(h, \xi, t)= \left( \Phi \left( \left\Vert h \right\Vert_{ 1} \ell_{ 0}\right) - \ell_{ 0}\right)h(t)\geq \left(r-1\right) \ell_{ 0}h(t).
  \end{align*}
 By \eqref{eq:cond_h}, Condition \eqref{eq:cond_rho} is satisfied so that, using Proposition~\ref{prop:monotone}, we see that $ \lambda$ is strictly increasing, starting from $ \lambda_{ 0}= \Phi\left(\ell_{ 0} \left\Vert h \right\Vert_{ 1}\right)\geq r \ell_{ 0}> \sup \mathcal{ P}$. Therefore $ \lambda$ is not bounded (if it were bounded, it would converge, necessarily to some element of $ \mathcal{ P}$). Hence $ \lambda_{ t} \to +\infty$ as $t\to +\infty$.
\end{proof}

We then address the possible behavior of solutions $ \lambda$ to \eqref{eq:conv_gen_lambda} in the critical case:
\begin{equation}
\label{eq:global_crit}
\limsup_{ x\to\infty} \frac{ \Phi(x)}{ x} \left\Vert h \right\Vert_{ 1}=1.
\end{equation}
\begin{example}
\label{ex:no_fixed_point_crit}
If $ \Phi$ is such that $ \Phi(x) \left\Vert h \right\Vert_{ 1}>x$ for all $x$, but such that \eqref{eq:global_crit} is nonetheless true (\textit{e.g.,} $ \Phi(x)= \frac{ 1}{ \left\Vert h \right\Vert_{ 1}}\log(1+e^{ x})$, $x>0$). We see from Proposition~\ref{prop:nofixedpoint} that \emph{any solution} $ \lambda$ to \eqref{eq:conv_gen_lambda} diverge to $+\infty$ in this case.
\end{example}
Example~\ref{ex:no_fixed_point_crit} is particular in the sense that the impossibility of convergence for any solution to \eqref{eq:conv_gen_lambda} is imposed by the absence of any fixed-point solution to \eqref{eq:fixed_point_lambda}. We provide in Proposition~\ref{prop:example_infty} below an example of some $ \Phi$ satisfying \eqref{eq:global_crit} with a unique fixed-point solution to \eqref{eq:fixed_point_lambda} such that there exists some $ \xi$ such that $ \lambda= \lambda^{ \xi}$ verifies $ \lambda_{ t} \xrightarrow[ t\to \infty]{}\infty$. In particular, this gives a situation in the critical case \eqref{eq:global_crit} where there is coexistence between bounded convergent solutions $ \lambda$ to \eqref{eq:conv_gen_lambda} and diverging ones.
\begin{proposition}
\label{prop:example_infty}
Consider the following kernel
\begin{equation}
\label{eq:example_Phi}
\Phi \left(x\right) = x - e^{ -x} + e^{ - \frac{ 3}{ 2}x} \sqrt{ 2+x},\ x\geq0.
\end{equation}
Take $h(u)= e^{ -u}$, $u\geq0$. Then, $ \Phi$ is Lipschitz continuous, strictly increasing, with $ \Phi(0)>0$, a unique fixed-point $\ell$ such that $ \left\Vert h \right\Vert_{ 1}\Phi^{ \prime} \left( \left\Vert h \right\Vert_{ 1}\ell\right)<1$ and $ \lim_{ x\to \infty} \frac{ \Phi(x)}{ x} \left\Vert h \right\Vert_{ 1}=1$. There exists some sufficiently large $ \xi_{ 0}$ and some $ \xi= (\xi_{ t})_{ t\geq 0}$ with $\xi_{ t=0}= \xi_{ 0}$ such that the solution $ \lambda= \lambda^{ \xi}$ to \eqref{eq:conv_gen_lambda} with source term $ \xi$ satisfies 
\begin{equation*}
\lambda_{ t} \xrightarrow[ t\to\infty]{}+\infty.
\end{equation*}
\end{proposition}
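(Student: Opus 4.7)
The plan is to exploit the fact that $h(u) = e^{-u}$ is the exponential kernel with rate $\alpha=1$, for which the NRE reduces to an ordinary differential equation as described in Remark~\ref{rem:exponential}. Specifically, by \eqref{eq:syst_n0} (with $\alpha = 1$), for any $\mathcal{C}^1$ source term $\xi$ the solution $x_t$ to the NRE \eqref{eq:conv_gen_X} satisfies $x_t' = -x_t + \Phi(x_t) + \xi_t + \xi_t'$ with $x_0 = \xi_0$. My strategy is to pick $\xi_t \equiv c$ constant for some large $c>0$ (so $\xi_t + \xi_t' = c$), which reduces the dynamics to the autonomous ODE
\begin{equation*}
    x_t' = F(x_t) + c, \qquad x_0 = c,
\end{equation*}
where $F(x) := \Phi(x) - x = -e^{-x} + e^{-3x/2}\sqrt{2+x}$. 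I would then show that for $c$ large enough, $x_t \to +\infty$, hence $\lambda_t = \Phi(x_t) \to +\infty$ since $\Phi(x)/x \to 1$ at infinity.

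The verification of the announced properties of $\Phi$ is a routine analytic check. A direct computation gives
\begin{equation*}
    \Phi'(x) = 1 + e^{-x} - \frac{(5+3x)\,e^{-3x/2}}{2\sqrt{2+x}},
\end{equation*}
which is bounded (so $\Phi$ is Lipschitz); a short monotonicity argument on the last term, combined with $\Phi'(0) = 2 - 5/(2\sqrt{2}) > 0$, shows $\Phi' > 0$ on $[0, +\infty)$. Clearly $\Phi(0) = \sqrt{2} - 1 > 0$ and $\Phi(x)/x \to 1$ as $x \to \infty$, so $\lim_{x\to\infty} \Phi(x) \|h\|_1 / x = 1$. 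A positive fixed point $\ell$ solves $\Phi(\ell)=\ell$, equivalently $e^\ell = 2+\ell$; strict monotonicity of $x\mapsto e^x - x$ on $[0,\infty)$ gives a unique $\ell \in (1,2)$. Using $e^{\ell} = 2+\ell$ and $e^{-3\ell/2} = (2+\ell)^{-3/2}$ I would then derive the subcritical identity
\begin{equation*}
    \Phi'(\ell) = 1 + \frac{1}{2+\ell} - \frac{5+3\ell}{2(2+\ell)^2} = 1 - \frac{1+\ell}{2(2+\ell)^2} < 1,
\end{equation*}
i.e.\ $\|h\|_1 \Phi'(\|h\|_1 \ell) < 1$.

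The core step is then to bound $F$ from below on $[0,+\infty)$. Since $F$ is continuous with $F(0) = \sqrt{2}-1 > 0$, $F(\ell) = 0$, and $F(x) \to 0^-$ as $x\to +\infty$ (the term $e^{-3x/2}\sqrt{2+x}$ decays strictly faster than $e^{-x}$), the infimum $m := \inf_{x\geq 0} F(x)$ is attained and finite; moreover $m < 0$ since e.g.\ $F(2) = -e^{-2} + 2e^{-3} < 0$. Picking any $c > -m$ and setting $\xi_t \equiv c$, well-posedness of \eqref{eq:conv_gen_lambda} follows from Proposition~\ref{prop:WP_solC1}, and differentiating $x_t = c + \int_0^t e^{-(t-s)}\Phi(x_s)\,ds$ confirms the autonomous ODE $x_t' = F(x_t) + c$. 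Since $F(x) + c \geq c + m > 0$ for every $x \geq 0$, the solution satisfies $x_t \geq c + (c+m)\,t \to +\infty$, whence $\lambda_t = \Phi(x_t) \to +\infty$. I do not foresee a substantive obstacle in implementing this plan: the only care needed is in the explicit computations establishing $\Phi'>0$, the subcriticality of the unique fixed point, and the lower boundedness of $F$, all of which are elementary.
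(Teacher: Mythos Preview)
Your verification of the properties of $\Phi$ (Lipschitz, strictly increasing, $\Phi(0)>0$, unique fixed point, subcriticality at $\ell$, and the limit $\Phi(x)/x\to 1$) is correct and cleanly done; the fixed-point identity $e^{\ell}=2+\ell$ and the ensuing computation $\Phi'(\ell)=1-\frac{1+\ell}{2(2+\ell)^{2}}<1$ are exactly right.

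However, your choice $\xi_{t}\equiv c$ misses the essential constraint: throughout the paper (see \S\ref{sec:NRE_intro} and the hypotheses of Propositions~\ref{prop:conv_X_lambda}, \ref{prop:global_subcrit}, etc.), the source term is always taken to vanish at infinity, $\xi_{t}\to 0$. Without this, the proposition is trivial and says nothing about the optimality of the boundedness criterion \eqref{eq:global_subcrit}: for \emph{any} reasonable $\Phi$ one can force $\lambda_{t}\to+\infty$ by a non-decaying source. The whole point of Proposition~\ref{prop:example_infty} is that divergence can occur in the critical case $\limsup \Phi(x)\|h\|_{1}/x=1$ \emph{even with} $\xi_{t}\to 0$, which is what the paper's proof establishes (and explicitly checks in its Step~0).

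This is not a cosmetic issue: once you impose $\xi_{t}\to 0$, your argument breaks down entirely. The forcing $\xi_{t}+\xi_{t}'$ must then tend to zero, and since $F(x)=\Phi(x)-x<0$ for large $x$, the autonomous drift is \emph{towards} the fixed point; a constant positive lower bound on $x_{t}'$ is no longer available. The paper resolves this with a delicate construction: it chooses $\xi$ so that $\xi_{t}+\xi_{t}'=\frac{a}{\sqrt{1+t}}$ decays, but slowly enough (like $t^{-1/2}$) to outpace the negative drift. It then compares with the simpler kernel $\Phi_{0}(x)=x-e^{-x}\leq\Phi(x)$ via Proposition~\ref{prop:comparison}, solves the resulting ODE $y_{t}'=-e^{-y_{t}}+\frac{a}{\sqrt{1+t}}$ explicitly (the substitution $y=\log u$ linearises it), and shows $y_{t}\sim 2a\sqrt{t}\to+\infty$. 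The specific form of $\Phi$ in \eqref{eq:example_Phi} is engineered precisely so that this comparison and explicit solution are available; your approach does not use this structure at all.
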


\begin{proof}[Proof of Proposition~\ref{prop:example_infty}]
The fact that $ \Phi$ is Lipschitz continuous, strictly increasing, with $ \Phi(0)>0$, with a unique fixed-point $\ell$ such that $ \left\Vert h \right\Vert_{ 1}\Phi^{ \prime}( \kappa\ell)<1$ and $ \lim_{ x\to \infty} \frac{ \Phi(x)}{ x} \left\Vert h \right\Vert_{ 1}=1$ is basic analysis.

\noindent \textit{Step 0: definition of $ \xi$ and $ \lambda= \lambda^{ \xi}$.} Let $a>0 \mapsto \Psi(a):= \log \left( \frac{ e^{ -2a}}{ a} \left( \frac{ 1}{ 2a}+1\right)+1\right)+2a$. It is easy to see that $ \Psi^{ \prime}(a) \xrightarrow[ a\to\infty]{} 2$ so there is some $a_{ \ast}>0$ so that $ \Psi$ is strictly increasing on $[a_{ \ast}, +\infty)$ with $ \lim_{ +\infty} \Psi=+\infty$. In particular, for any $ \xi_{ 0}\geq \Psi \left(a_{ \ast}\right)$, there is some $a\geq a_{ \ast}$ such that 
 \begin{equation}
 \label{eq:xi0_a}
 \xi_{ 0}= \Psi \left(a\right).
 \end{equation}
 For the rest of the proof, we fix once and for all such $ \left(\xi_{ 0}, a\right)$ verifying \eqref{eq:xi0_a}. Once this fixed, we define
 \begin{equation}
 \label{eq:xit_a}
 \xi_{ t}= e^{ -t} \xi_{ 0} + a\int_{ 0}^{t} \frac{ e^{ -(t-s)}}{ \sqrt{ 1+s}} {\rm d}s=e^{ -t} \xi_{ 0} + a\int_{ 0}^{t} \frac{ e^{ -s}}{ \sqrt{ 1+(t-s)}} {\rm d}s.
 \end{equation}
 We easily see that $ \xi$ is $ \mathcal{ C}^{ 1}$ and a direct application of dominated convergence theorem shows that $ \xi_{ t} \xrightarrow[ t\to\infty]{}0$. From now on, we define as $ \lambda= \lambda^{ \xi}$ the solution to \eqref{eq:conv_gen_lambda} with source term $ \xi$. In particular, $ \lambda_{ t}= \Phi \left(x_{ t}\right)$ with $ x_{ t}= \xi_{ t}+ \int_{ 0}^{t} e^{ -(t-s)} \Phi(x_{ s}) {\rm d}s$. 
 
 \medskip

\noindent \textit{Step 1: Comparison result.}
Recalling \eqref{eq:example_Phi} and since $r(x):= e^{ - \frac{ 3}{ 2}x} \sqrt{ 2+x}>0$, we have
 \begin{equation*}
 \Phi(x)> \Phi_{ 0}(x):= x- e^{ -x},\ x\geq0.
 \end{equation*}
 A direct application of the comparison result of Proposition~\ref{prop:comparison} gives
 \begin{equation}
 \label{eq:x_vs_y}
 x_{ t} \geq y_{ t},\ t\geq 0
 \end{equation}
 where $y= \left(y_{ t}\right)_{ t\geq0}$ solves
 \begin{equation*}
 y_{ t}= \xi_{ t}+ \int_{ 0}^{t} e^{ -(t-s)} \Phi_{ 0}(y_{ s}) {\rm d}s.
 \end{equation*}
 
 \medskip

\noindent \textit{Step 2: Explicit expression for $y$.} One can take advantage of the exponential nature of $h$: differentiating the previous identity, we obtain that $y_{ t}$ is the solution to the following differential equation
 \begin{equation*}
 y^{ \prime}_{ t}= \Phi_{ 0} \left(y_{ t}\right) - y_{ t} + \xi_{ t} + \xi^{ \prime}_{ t},\ t\geq0,\ \text{ with } y_{ 0}= \xi_{ 0}.
 \end{equation*}
Notice here that $ \xi$ defined through \eqref{eq:xit_a} has been precisely chosen so that $ \xi_{ t} + \xi_{ t}^{ \prime}= \frac{ a}{ \sqrt{ 1+t}}$, $t\geq0$. Hence, the previous equation boils down to
 \begin{equation}
 \label{eq:yt}
 y^{ \prime}_{ t}= - e^{ - y_{ t}} + \frac{ a}{ \sqrt{ 1+t}},\ t\geq0.
 \end{equation}
A simple change of variables $y_{ t}= \log(u_{ t})$ turns \eqref{eq:yt} into a linear differential equation in $u_{ t}$: $u^{ \prime}_{ t}-  u_{ t} \frac{ a}{ \sqrt{ 1+t}}=-1$. From this, we see that any solution to \eqref{eq:yt} is explicitly solved as
\begin{equation}
\label{eq:yt_expr}
y_{ t}= \log \left(\frac{ e^{ -2a \sqrt{ 1+t}}}{ a} \left( \frac{ 1}{ 2a} + \sqrt{ 1+t}\right)+c\right) +2a \sqrt{ 1+t}
\end{equation}
where $c$ is an arbitrary constant. Recalling that $y_{ 0}= \xi_{ 0}$ and that $ \xi_{ 0}$ and $a$ have been chosen through \eqref{eq:xi0_a}, the constant $c$ is equal to $1$. 

\medskip

\noindent \textit{Step 3: Conclusion.} We directly see from \eqref{eq:yt_expr} that $y_{ t} \xrightarrow[ t\to\infty]{}+\infty$ and we conclude from \eqref{eq:x_vs_y} that $ x_{ t} \xrightarrow[ t\to\infty]{}+\infty$ too. By strict monotonicity of $ \Phi$ so does $ \lambda$: $ \lambda_{ t} \xrightarrow[ t\to \infty]{}+\infty$.
\end{proof}

\section{Proof of Theorem \ref{th:TCL}\label{sec:prf_TCL}}  

Note that we have assumed that $ \xi$ is bounded such that $ \xi_{ t} \xrightarrow[ t\to\infty]{}0$. Proposition \ref{prop:subcritical_Lip} applies:  $\lambda=\lambda^{\xi}$ solution to \eqref{eq:conv_gen_lambda} with source term $\xi$ converges to the unique fixed-point $\ell$ to \eqref{eq:fixed_point_intro}: $\lambda_t\xrightarrow[t\to\infty]{}\ell$. Since $\Vert h\Vert_1 \vert \Phi \Vert_{Lip}<1$, there exists $\varepsilon_0$ small enough such that \eqref{eq:rate_tauT} is satisfied and some $t_0\ge 0$ such that \eqref{eq:prox_lambda_ell} holds. Then, Theorem \ref{th:conv_rate} applies and allows to write 
\begin{align}\label{eq:cv_mt}
    \left|\frac{m_t}{t}-\ell\right|&\le\frac1t\int_0^t|\lambda_s-\ell |\rmd s\le\frac{(\|\lambda\|_\infty+\ell)\sigma(t_0)}{t}+C\frac{\int_{\sigma(t_0)}^t(\log s)^bs^{-b}\rmd s}{t}\le C'(\log t)^b t^{-b}.
\end{align}This in particular implies that $m_t/t\xrightarrow[t\to\infty]{}\ell$.

For any $N\geq 1$, the mean-field Hawkes process $\left(Z^1, \ldots, Z^N\right)$ whose conditional intensity is given by \eqref{eq:Hawkes_N} has the following standard representation  (see \cite[Lemma 3]{MR1411506}): consider on a filtered probability space $( \Omega, \mathcal{ F}, \left(\mathcal{ F}_{ t}\right)_{ t\geq 0}, \mathbf{ P})$ an family of i.i.d. Poisson measures $(\pi_{i}(\rmd s,\rmd z), i\in\{1,\ldots,N\})$ with intensity measure $\rmd s\times \rmd z$ on $[0,\infty)\times[0,\infty)$.
Then defining the family of c\`adl\`ag $(\mathcal{ F}_{ t})_{ t\geq0}$ point processes $(Z_{t}^{ i})_{ t\geq0, i=1,\ldots, N}$ as given by
\begin{equation}
\label{eq:Hawkes}
Z_{t}^{i}= \int_{0}^{t}\int_{0}^{\infty}\mathbf{1}_{z\le \lambda_{ s}^{ i}}\pi_{i}(\rmd s,\rmd z), i=1,\ldots, N,
\end{equation}
it is easy to see that each $Z^i$ given by \eqref{eq:Hawkes} has indeed for intensity  $ \lambda^{ i}_t=\lambda_{N,t}$, $i=1,\ldots, N$, given by \eqref{eq:Hawkes_N}.
$
\lambda^{i}_{t}= \lambda_{ N, t}=\Phi\left(\xi_{ N, t}+ \frac1N\sum_{j=1}^{N}\int_{0}^{t-}h(t-s)\rmd Z^{j}_{s}\right)$. Given these $\pi_i$, following \cite{MR3449317}, we couple $\left(Z^1, \ldots, Z^N\right)$ given by \eqref{eq:Hawkes} with i.i.d. copies $\left(\bar Z^1, \ldots, \bar Z^N\right)$ of inhomogeneous Poisson processes with intensity $\lambda_t$ solution to \eqref{eq:conv_gen_lambda} in the following way:
\begin{equation}
\label{eq:Hawkes_bar}
\bar Z^i_{t}= \int_{0}^{t}\int_{0}^{\infty}\mathbf{1}_{z\le \lambda_s}{ \pi_i}(\rmd s,\rmd z).
\end{equation}

In other words, the coupling between \eqref{eq:Hawkes} and \eqref{eq:Hawkes_bar} has been defined in such a way that both processes are built on the same underlying Poisson measure $\pi_i$.  Thus, the following result \eqref{eq:coupling} below can be proved following the lines of the proof of \cite[Theorem 8(ii)]{MR3449317} and \cite[Remark 9(a)]{MR3449317} therein. The addition of the source term $\xi_N$ entails some minor modifications that do not change the calculations of the previous references.
The details are omitted here.
It holds that  
\begin{align}\label{eq:coupling}
\sup_{ i=1, \ldots, N} \mathbb{ E} \left[\sup_{ s\in [0, t]} \left\vert Z_{s}^{i} - \bar Z_{ s}^{i}\right\vert\right] \leq \frac{(C_\xi+\Vert \lambda \Vert_{ \infty}^{ \frac{ 1}{ 2}} \Vert h\Vert_{ 2})}{1-\|h\|_1|\Phi|_{Lip}} \frac{t}{ \sqrt{N}}:=\tilde C \frac{t}{ \sqrt{N}},
\end{align} 
Consider the decomposition: for $u\in[0,1]$
\begin{align}\label{eq:Dec_TCL}
    \sqrt{m_{t}}\left(\frac{Z^i_{ut}-m_{ut}}{m_{t}}\right)&=\sqrt{m_{t}}\left(\frac{\overline Z^i_{ut}-m_{ut}}{m_{t}}\right)+\sqrt{m_{t}}^{-1/2}\left(Z^i_{ut}-\overline Z^i_{ut} \right)
\end{align}
where by \eqref{eq:coupling} we have that 
$$\E\left[\sup_{s\le t}|Z^i_{s}-\overline Z^i_{s}| \right]\le \tilde C\frac{t}{\sqrt N}.$$ It follows from \eqref{eq:cv_mt} 
\begin{align}\label{eq:Res_TCL}
    \sqrt{m_{t}}^{-1/2}\sup_{u\in[0,1]}\left|Z^i_{ut}-\overline Z^i_{ut} \right|&\le \tilde C\frac{t}{\sqrt {m_{t}N}}\xrightarrow[(t,N)\to\infty,\ \frac{t}{N}\to 0]{\PP} 0. 
\end{align}  To study the first term in the left hand side of \eqref{eq:Dec_TCL}, define $M_{ut}:=\left(\overline Z^i_{ut}-m_{ut}\right)$, a centered martingale with jumps that are all of size 1, thus uniformly bounded. Moreover,   $[M,M]_{ut}={\overline Z^i_{ut}},$ as $\overline Z^i$ is an inhomogeneous Poisson process with intensity function  $\Phi\left(\xi_.+\int_0^.h(.-u)\rmd m_u\right),$ then $$\frac{\overline Z^i_{ut}-m_{ut}}{ut}\xrightarrow[t\to \infty]{\PP} 0.$$ It follows that $$\left(\frac{\overline Z^i_{ut}}{m_{t}}-u\right)=\frac{ut}{m_t}\frac{\overline Z^i_{ut}-m_{ut}}{ut}+\left(\frac{m_{ut}}{m_t}-u\right)\xrightarrow[t\to \infty]{\PP} 0.$$ Using \eqref{eq:cv_mt} we derive that $m_t^{-1}[M,M]_{ut}\xrightarrow[t\to \infty]{\PP} u$ for all $u\in[0,1].$ 
Applying a functional martingale limit theorem, 
see \cite[Lemma 12]{MR3449317} and its proof (see also \cite[Theorem VIII-3.11]{JS}) we get the following: 
\begin{align*} \left(\sqrt{m_t}\frac{\overline Z^i_{ut}-m_{ut}}{m_t}\right)_{u\in[0,1]}\xrightarrow[t\to \infty]{d} B,\end{align*} where $B=(B_u)_{u\in[0,1]}$ is a standard Brownian motion. Using the coupling result, this functional limit result is then transferred to $Z^i_t$ using \eqref{eq:Dec_TCL} and \eqref{eq:Res_TCL}
and leads to the first limit result \eqref{eq:Func_CLT}.\\

Secondly, \eqref{eq:Res_TCL_est} is easily derived from \eqref{eq:Dec_ellT} combined with \eqref{eq:Func_CLT} and \eqref{eq:cv_mt}: 
from \eqref{eq:cv_mt} and \eqref{eq:Func_CLT}  with $u=1$ we immediately get $I_{ N, t}(1)\xrightarrow[{(t,N)\to(\infty,\infty)},\ {\frac{t}{N}\to0}]{d}\mathcal{N}(0,\ell).$ The remaining term $I_{ t}(2)$ goes to 0 using \eqref{eq:cv_mt} as $b>\frac12$, leading to the desired result.

\section{Auxiliary results}
\label{sec:auxiliary}
\subsection{A Gr\"onwall Lemma for convolution}

The following lemma is a perturbed version of \cite[Lemma~23]{MR3449317}:
\begin{lemma}
\label{lem:Gronwall_eps}
Let $h$ be locally integrable on $[0, +\infty)$. Let $(h_{ \varepsilon})_{ \varepsilon>0}$ such that $ h_{ \varepsilon} \xrightarrow[ \varepsilon\to 0]{} h$ in $L^{ 1}_{ loc}$. Let $ \left(g_{ \varepsilon}\right)_{ \varepsilon>0}$ a family of nonnegative locally bounded functions on $[0, +\infty)$. If $(u_{ \varepsilon})_{ \varepsilon>0}$ is a family of nonnegative locally bounded functions on $[0, +\infty)$ such that
\begin{equation*}
u_{ \varepsilon, t} \leq g_{ \varepsilon, t} + \int_{ 0}^{t} \left\vert h_{ \varepsilon}(t-s) \right\vert u_{ \varepsilon, s} {\rm d}s,\ t\geq0,
\end{equation*}
then, for all $T>0$, there exists some $C_{ T}>0$ (depending only on $T$ and $h$ but independent of $ \varepsilon>0$) and some $ \varepsilon_{ 0}>0$ (depending on $T$ and $h$) such that
\begin{equation*}
\sup_{ t\in [0, T]} u_{ \varepsilon, t} \leq C_{ T} \sup_{ t\in [0, T]} g_{ \varepsilon, t},\ \varepsilon\in (0, \varepsilon_{ 0}).
\end{equation*}
\end{lemma}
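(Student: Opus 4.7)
The plan is to iterate the integral inequality on time intervals of fixed length $\delta > 0$ chosen small enough so that $\int_0^\delta |h_\varepsilon(u)|\,\mathrm{d}u$ is uniformly (in $\varepsilon$) bounded by $1/2$. The $L^1_{loc}$ convergence $h_\varepsilon \to h$ is used exactly to make this choice of $\delta$ independent of $\varepsilon$.

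First, by absolute continuity of the Lebesgue integral applied to $|h|$, one picks $\delta \in (0, T]$ with $\int_0^\delta |h(u)|\,\mathrm{d}u \leq 1/4$. Since $\int_0^\delta |h_\varepsilon(u) - h(u)|\,\mathrm{d}u \to 0$ as $\varepsilon \to 0$, there exists $\varepsilon_0 > 0$ such that for all $\varepsilon \in (0, \varepsilon_0)$, one has simultaneously $\int_0^\delta |h_\varepsilon(u)|\,\mathrm{d}u \leq 1/2$ and $M_\varepsilon := \int_0^T |h_\varepsilon(u)|\,\mathrm{d}u \leq M := \|h\|_{L^1([0,T])} + 1$. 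Set $N := \lceil T/\delta \rceil$ and denote $G_\varepsilon := \sup_{t\in[0,T]} g_{\varepsilon, t}$.

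Next comes the iteration. On $[0,\delta]$, the integral inequality gives directly $u_{\varepsilon,t} \leq G_\varepsilon + \tfrac{1}{2} \sup_{s\in[0,\delta]} u_{\varepsilon,s}$, hence $\sup_{[0,\delta]} u_\varepsilon \leq 2 G_\varepsilon =: C_0 G_\varepsilon$. Assume by induction that $\sup_{[0, k\delta]} u_\varepsilon \leq C_{k-1} G_\varepsilon$. For $t \in [k\delta, (k+1)\delta]$, split the convolution into $\int_0^{k\delta} + \int_{k\delta}^{t}$. The second piece, via the change of variables $u = t-s$, is bounded by $\sup_{[k\delta,(k+1)\delta]} u_\varepsilon \cdot \int_0^\delta |h_\varepsilon(u)|\,\mathrm{d}u \leq \tfrac{1}{2} \sup_{[k\delta,(k+1)\delta]} u_\varepsilon$; the first piece is bounded by $C_{k-1} G_\varepsilon \cdot M$. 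Taking the supremum over $t \in [k\delta,(k+1)\delta]$ and absorbing the $1/2$-factor yields the recursion $C_k \leq 2 + 2 M C_{k-1}$, which gives $C_T := C_N$ a constant depending only on $T$, $\delta$ and $\|h\|_{L^1([0,T])}$, hence only on $T$ and $h$ as required.

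The only subtle point, and the one the whole statement is designed to handle, is that the length $\delta$ on which the iteration is performed must be chosen uniformly in $\varepsilon$. If one merely assumed $h_\varepsilon$ to converge pointwise or weakly, no such uniform $\delta$ could be guaranteed; the $L^1_{loc}$ hypothesis is precisely what rules out concentration phenomena of $|h_\varepsilon|$ near $0$ and allows both the bound $\int_0^\delta |h_\varepsilon| \leq 1/2$ and the uniform bound $M_\varepsilon \leq M$. Everything else is the standard step-by-step Grönwall argument, performed $N$ times across intervals of length $\delta$ covering $[0,T]$.
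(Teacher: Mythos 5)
Your argument is correct, but it takes a genuinely different route from the paper's. The paper splits the kernel rather than the time axis: it writes $|h_\varepsilon| \le |h_\varepsilon - h| + |h|$, uses the $L^1_{loc}$ convergence to make $\int_0^T |h_\varepsilon - h|$ at most $1/4$ for small $\varepsilon$, then truncates $h$ by a height $A$ (chosen so that $\int_0^T |h|\,\mathbf{1}_{|h|>A} \le 1/4$, independent of $\varepsilon$), absorbs the two resulting $\tfrac14\sup$-terms, and concludes with the classical Gr\"onwall lemma applied to the remaining term $A\int_0^t u_{\varepsilon,s}\,{\rm d}s$, yielding $C_T$ of order $e^{2AT}$. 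You instead transfer the relevant uniform controls directly onto $h_\varepsilon$ (no mass concentration near $0$, uniform total mass $M$ on $[0,T]$) and run a block-by-block iteration over intervals of length $\delta$, absorbing half of the local supremum at each step; this replaces the invocation of classical Gr\"onwall by an explicit recursion $C_k \le 2 + 2MC_{k-1}$ and gives a constant exponential in $T/\delta$. Both proofs exploit the same underlying fact — absolute continuity of the integral of the fixed limit $|h|$, sliced by time in your case and by height in the paper's — and both produce a $C_T$ depending only on $T$ and $h$; the paper's version is marginally shorter because it outsources the iteration to Gr\"onwall, while yours is more self-contained and makes the role of the $L^1_{loc}$ hypothesis (uniform non-concentration of $|h_\varepsilon|$ near $0$) more transparent. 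Two cosmetic points to tidy up if you write this in full: in the last block take $t$ in $[(N-1)\delta, T]$ rather than up to $N\delta$ (or enlarge $M$ to $\|h\|_{L^1([0,N\delta])}+1$), and note explicitly that the absorption of the $\tfrac12\sup$ terms is licit because $u_\varepsilon$ is locally bounded, so each block supremum is finite.
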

\begin{proof}[Proof of Lemma~\ref{lem:Gronwall_eps}]
The proof is an adaptation of \cite[Lemma~23]{MR3449317}. Let $ \varepsilon>0$ and $T>0$. Write for $t\in [0, T]$,
\begin{align}
\label{aux:ueps}
u_{ \varepsilon, t} \leq g_{ \varepsilon, t} + \int_{ 0}^{t} \left\vert h_{ \varepsilon}(t-s) - h(t-s) \right\vert u_{ \varepsilon, s} {\rm d}s + \int_{ 0}^{t} \left\vert h(t-s) \right\vert u_{ \varepsilon, s} {\rm d}s.
\end{align}
The second term of the righthand side of the previous inequality \eqref{aux:ueps} can be bounded as
\begin{align*}
\int_{ 0}^{t} \left\vert h_{ \varepsilon}(t-s) - h(t-s) \right\vert u_{ \varepsilon, s} {\rm d}s \leq \sup_{ s\in [0, T]} u_{ \varepsilon, s} \int_{ 0}^{T} \left\vert h_{ \varepsilon}(u) - h(u) \right\vert {\rm d}u.
\end{align*}
By assumption $ h_{ \varepsilon} \xrightarrow[ \varepsilon\to 0]{L^{ 1}_{ loc}} h$ so that there is some $ \varepsilon_{ 0}= \varepsilon_{ 0}(h,T)>0$ such that for $ \varepsilon\in (0, \varepsilon_{ 0})$, $ \int_{ 0}^{T} \left\vert h_{ \varepsilon}(u) - h(u) \right\vert {\rm d}u\leq \frac{ 1}{ 4}$. Concerning the third term in \eqref{aux:ueps}, proceed exactly as in \cite[Lemma~23]{MR3449317}: let $A>0$ (independent of $ \varepsilon$) such that $ \int_{ 0}^{T} \left\vert h(u) \right\vert \mathbf{ 1}_{ \left\vert h(u) \right\vert> A}{\rm d}u\leq \frac{ 1}{ 4}$. Inserting this and the previous estimate into \eqref{aux:ueps}, we get
\begin{align*}
u_{ \varepsilon, t} &\leq g_{ \varepsilon, t} + \frac{ 1}{ 4} \sup_{ s\in [0, T]} u_{ \varepsilon, s} + \int_{ 0}^{t} \left\vert h(t-s) \right\vert \mathbf{ 1}_{ \left\vert h(t-s) \right\vert > A} u_{ \varepsilon, s} {\rm d}s + \int_{ 0}^{t}| h(t-s)| \mathbf{ 1}_{ |h(t-s)| \leq A} u_{ \varepsilon, s} {\rm d}s,\\
&\leq g_{ \varepsilon, t} + \frac{ 1}{ 4} \sup_{ s\in [0, T]} u_{ \varepsilon, s} +\sup_{ s\in [0, T]} u_{ \varepsilon, s} \int_{ 0}^{t} \left\vert h(t-s) \right\vert \mathbf{ 1}_{ \left\vert h(t-s) \right\vert > A}  {\rm d}s + A\int_{ 0}^{t} u_{ \varepsilon, s} {\rm d}s,\\
& \leq g_{ \varepsilon, t} + \frac{ 1}{ 2} \sup_{ s\in [0, T]} u_{ \varepsilon, s}+ A\int_{ 0}^{t} u_{ \varepsilon, s} {\rm d}s.
\end{align*}
Taking the supremum in $t\in [0, T]$ on both sides, we get $ \sup_{ t\in [0, T]} u_{ \varepsilon, t} \leq 2 \sup_{ t\in [0, T]} g_{ \varepsilon, t} + 2A \int_{ 0}^{t} u_{ \varepsilon, s} {\rm d}s$. We conclude that $ \sup_{ t\in [0, T]} u_{ \varepsilon, t} \leq e^{ 2AT}\sup_{ t\in [0, T]} g_{ \varepsilon, t} $ by the classical Gr\"onwall lemma.
\end{proof}

\subsection{Comparison results}
\begin{proposition}
\label{prop:comparison}
Let $ \Phi_{ 1}$ and $ \Phi_{ 2}$ two kernels which are Lipschitz continuous. For all $i=1,2$, let $ \lambda_{ i}$ the solution to \eqref{eq:conv_gen_lambda} for the choice of $ \Phi= \Phi_{ i}$, driven by the same kernel $h$ and with the same source term $ \xi$. Suppose that $h$ is locally integrable (non necessarily nonnegative). Suppose that $ \xi$ is locally bounded. Then the following is true:
\begin{enumerate}[label=(\alph*)]
\item for all $T>0$, there exist some constants $C=C(T, h, \Phi_{ 1})>0$ and $ \kappa= \kappa(T, h, \xi, \Phi_{ 2})$ such that
\begin{equation}
\label{eq:local_Lip_Phis}
\sup_{ u\in [0, T]} \left\vert \lambda_{1, u} - \lambda_{2, u}\right\vert \leq C \sup_{ \left\vert u \right\vert\leq \kappa} \left\vert \Phi_{ 2}(u)- \Phi_{ 1}(u) \right\vert \leq C \left\Vert \Phi_{ 1}- \Phi_{ 2} \right\Vert_{ \infty}.
\end{equation}
Moreover, in the case $ \left\vert \Phi_{ 1} \right\vert_{ Lip} \left\Vert h \right\Vert_{ 1}< 1$, the constant $C$ can be made independent of $T>0$ and we have the uniform bound
\begin{equation}
\label{eq:global_Lip_Phis}
\sup_{ u\geq0} \left\vert \lambda_{1, u} - \lambda_{2, u}\right\vert \leq C \left\Vert \Phi_{ 1}- \Phi_{ 2} \right\Vert_{ \infty}.
\end{equation}

\item Suppose here that $h$ is nonnegative, that $ \xi$ is continuous, that $ \Phi_{ 1}$ is nondecreasing as well as
\begin{equation}
\label{eq:comp_Phi12}
\text{ for all $x\in \mathbb{ R}$, } \Phi_{ 1}(x)\leq \Phi_{ 2}(x).
\end{equation} 
Then, it holds
\begin{equation}
\label{eq:comparison_lambda12}
\lambda_{1, t} \leq \lambda_{2, t},\ t\geq0.
\end{equation}
\end{enumerate}
\end{proposition}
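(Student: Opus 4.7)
\medskip

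The plan for part (a) is to set up a Volterra-type inequality for the difference $\lambda_{1,\cdot}-\lambda_{2,\cdot}$. Writing
\begin{equation*}
\lambda_{1,t}-\lambda_{2,t} = \Phi_1\!\left(\xi_{t}+\!\int_0^t\! h(t-s)\lambda_{1,s}{\rm d}s\right) - \Phi_1\!\left(\xi_{t}+\!\int_0^t\! h(t-s)\lambda_{2,s}{\rm d}s\right) + (\Phi_1-\Phi_2)\!\left(\xi_{t}+\!\int_0^t\! h(t-s)\lambda_{2,s}{\rm d}s\right),
\end{equation*}
Lipschitz continuity of $\Phi_1$ yields
\begin{equation*}
|\lambda_{1,t}-\lambda_{2,t}| \le |\Phi_1|_{Lip}\int_0^t|h(t-s)||\lambda_{1,s}-\lambda_{2,s}|{\rm d}s + \sup_{|u|\le\kappa}|\Phi_1(u)-\Phi_2(u)|,
\end{equation*}
where $\kappa=\kappa(T,h,\xi,\Phi_2):=\|\xi\|_{\infty,[0,T]}+\|h\|_{1,[0,T]}\|\lambda_2\|_{\infty,[0,T]}$ controls the argument of $\Phi_1-\Phi_2$ uniformly on $[0,T]$ (the local boundedness of $\lambda_2$ being guaranteed by Proposition~\ref{prop:WP_solC1}). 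Applying Lemma~\ref{lem:Gronwall_eps} (with $h_\varepsilon\equiv h$) gives \eqref{eq:local_Lip_Phis}. For the global estimate \eqref{eq:global_Lip_Phis}, take the supremum over $t\ge 0$ directly in the above inequality: since $\int_0^t|h(t-s)||\lambda_{1,s}-\lambda_{2,s}|{\rm d}s\le \|h\|_1\sup_{s\ge 0}|\lambda_{1,s}-\lambda_{2,s}|$ and the sup of the second term is $\le\|\Phi_1-\Phi_2\|_\infty$, the contractivity hypothesis $|\Phi_1|_{Lip}\|h\|_1<1$ allows to absorb the $\lambda$-term in the left-hand side, giving the bound with $C=(1-|\Phi_1|_{Lip}\|h\|_1)^{-1}$. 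Note that under this contractivity, both $\lambda_1$ and $\lambda_2$ are globally bounded by Proposition~\ref{prop:subcritical_Lip}, which justifies finiteness of the supremum.

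For part (b), the plan is to rely on the Picard iteration used in the proof of Proposition~\ref{prop:WP_solC1}. Define for $i=1,2$ the sequences $\lambda_i^{(0)}\equiv 0$ and $\lambda_{i,t}^{(n+1)}:=\Phi_i\bigl(\xi_t+\int_0^t h(t-s)\lambda_{i,s}^{(n)}{\rm d}s\bigr)$. I will prove by induction on $n$ that $\lambda_{1,t}^{(n)}\le\lambda_{2,t}^{(n)}$ for all $t\ge 0$: the base case is trivial, and if $\lambda_{1,s}^{(n)}\le\lambda_{2,s}^{(n)}$ for all $s$, then since $h\ge 0$,
\begin{equation*}
\xi_t+\int_0^t h(t-s)\lambda_{1,s}^{(n)}{\rm d}s \le \xi_t+\int_0^t h(t-s)\lambda_{2,s}^{(n)}{\rm d}s,
\end{equation*}
so the monotonicity of $\Phi_1$ combined with \eqref{eq:comp_Phi12} gives
\begin{equation*}
\lambda_{1,t}^{(n+1)}\le \Phi_1\!\left(\xi_t+\!\int_0^t\! h(t-s)\lambda_{2,s}^{(n)}{\rm d}s\right)\le \Phi_2\!\left(\xi_t+\!\int_0^t\! h(t-s)\lambda_{2,s}^{(n)}{\rm d}s\right)=\lambda_{2,t}^{(n+1)}.
\end{equation*}
The conclusion \eqref{eq:comparison_lambda12} then follows by passing to the limit, thanks to the local uniform convergence $\lambda_i^{(n)}\to\lambda_i$ established in the proof of Proposition~\ref{prop:WP_solC1}.

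I do not anticipate any real obstacle: part (a) is a direct Gr\"onwall-type computation and part (b) is the standard monotone Picard comparison. The only small technical point is in part (a), in identifying the correct $\kappa$ so as to localize the $L^\infty$-norm of $\Phi_1-\Phi_2$ to the range of the integrand, which requires invoking the local boundedness from Proposition~\ref{prop:WP_solC1}.
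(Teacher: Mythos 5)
Part (a) of your argument is essentially the paper's own proof: same add-and-subtract decomposition in $\Phi_1$, same choice of $\kappa_T=\|\xi\|_{\infty,[0,T]}+\|h\|_{1,[0,T]}\|\lambda_2\|_{\infty,[0,T]}$, and the same appeal to Lemma~\ref{lem:Gronwall_eps}. One caveat in your treatment of the uniform bound \eqref{eq:global_Lip_Phis}: you take $\sup_{t\ge 0}$ directly and justify its finiteness by invoking Proposition~\ref{prop:subcritical_Lip} for \emph{both} solutions, but that proposition requires $|\Phi_2|_{Lip}\|h\|_1<1$ and a bounded source term vanishing at infinity, neither of which is assumed here (only $|\Phi_1|_{Lip}\|h\|_1<1$ and $\xi$ locally bounded). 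The correct and cheaper route, which is the one in the paper, is to take the supremum over $[0,T]$ only (finite by local boundedness from Proposition~\ref{prop:WP_solC1}), bound the $\Phi$-difference term by the global $\|\Phi_1-\Phi_2\|_\infty$, absorb $|\Phi_1|_{Lip}\|h\|_1\sup_{[0,T]}|\lambda_1-\lambda_2|$ into the left-hand side, and observe that the resulting constant $C=(1-|\Phi_1|_{Lip}\|h\|_1)^{-1}$ is independent of $T$, so one may let $T\to\infty$. With that localization your absorption argument is exactly right; as written, the finiteness step is unjustified.

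Part (b) is correct but genuinely different from the paper. The paper proves the comparison by a first-crossing-time argument: it first treats the strict case $\Phi_1<\Phi_2$, showing that at the first time $t_\ast$ where $\lambda_{2}$ would dip below $\lambda_1$ the decomposition forces $\lambda_{2,t_\ast}-\lambda_{1,t_\ast}>0$ (a contradiction), and then handles \eqref{eq:comp_Phi12} by perturbing $\Phi_2^\varepsilon=\Phi_2+\varepsilon$ and passing to the limit via part (a). Your monotone Picard induction, starting from $\lambda_i^{(0)}\equiv 0$ and propagating $\lambda_1^{(n)}\le\lambda_2^{(n)}$ using $h\ge 0$, the monotonicity of $\Phi_1$ and \eqref{eq:comp_Phi12}, then passing to the limit by the locally uniform convergence established in the proof of Proposition~\ref{prop:WP_solC1}, is valid and in some respects cleaner: it needs neither the continuity of $\xi$ (used in the paper to get continuous solutions for the crossing argument) nor the strict-inequality detour and the $\varepsilon$-limit via part (a). The paper's route, on the other hand, is self-contained at the level of the solutions themselves and does not rely on the particular approximation scheme used to construct them.
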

\begin{proof}[Proof of Proposition~\ref{prop:comparison}]
First prove the first item of Proposition~\ref{prop:comparison}. Denote by $c_{ 1}$ the Lipschitz constant of $ \Phi_{ 1}$. 
For all $t\geq0$,
\begin{align}
\lambda_{2, t} - \lambda_{1, t}&= \Phi_{ 2} \left( \xi_{ t} + \int_{ 0}^{t} h(t-s) \lambda_{2, s} {\rm d}s\right)-\Phi_{1} \left( \xi_{ t} + \int_{ 0}^{t} h(t-s) \lambda_{1, s} {\rm d}s\right), \nonumber\\
&= \Phi_{ 2} \left( \xi_{ t} + \int_{ 0}^{t} h(t-s) \lambda_{2, s} {\rm d}s\right)-\Phi_{1} \left( \xi_{ t} + \int_{ 0}^{t} h(t-s) \lambda_{2, s} {\rm d}s\right) \nonumber\\
&+ \Phi_{ 1} \left( \xi_{ t} + \int_{ 0}^{t} h(t-s) \lambda_{2, s} {\rm d}s\right)-\Phi_{1} \left( \xi_{ t} + \int_{ 0}^{t} h(t-s) \lambda_{1, s} {\rm d}s\right). \label{eq:lambdas12}
\end{align}
Recall that $ t \mapsto \lambda_{2, t}$ is locally bounded: hence for all $t\in [0, T]=[0, pt_{ 1}]$,
\begin{equation*}
\left\vert \xi_{ t} + \int_{ 0}^{t} h(t-s) \lambda_{2, s} {\rm d}s\right\vert \leq \left\Vert \xi \right\Vert_{ \infty, [0, T]} + \left\Vert \lambda_{ 2} \right\Vert_{ \infty, [0, T]} \int_{ 0}^{T} \left\vert h(u) \right\vert {\rm d}u:= \kappa_{ T}<\infty,
\end{equation*}
where the constant $ \kappa_{ T}$ depends on $(T, h, \xi, \Phi_{ 2})$.  We obtain directly from the previous equality that
\begin{align}
\label{aux:lambda12Phi}
\left\vert \lambda_{2, t} - \lambda_{1, t}\right\vert \leq \sup_{\left\vert u \right\vert \leq \kappa_{ T}}\left\vert \Phi_{ 2}(u) - \Phi_{ 1}(u)\right\vert + c_{ 1} \int_{ 0}^{t} \left\vert h(t-s) \right\vert \left\vert \lambda_{2, s} - \lambda_{1, s}\right\vert {\rm d}s.
\end{align}
We are now in position to apply Lemma~\ref{lem:Gronwall_eps} (in a simple case where there is no dependence in any $ \varepsilon$): there exists some $C= C(T, \Phi_{ 1})$ such that 
\begin{equation*}
\sup_{ t\in [0, T]}\left\vert \lambda_{ 2,t} - \lambda_{ 1,t}\right\vert \leq C \sup_{\left\vert u \right\vert \leq \kappa_{ T}}\left\vert \Phi_{ 2}(u) - \Phi_{ 1}(u)\right\vert,
\end{equation*} which is \eqref{eq:local_Lip_Phis}.

In the case $ c_{1} \left\Vert h \right\Vert_{ 1}<1$, one can bound \eqref{aux:lambda12Phi} as 
\begin{align*}
\left\vert \lambda_{2, t} - \lambda_{1, t}\right\vert \leq \sup_{\left\vert u \right\vert \leq \kappa_{ T}}\left\vert \Phi_{ 2}(u) - \Phi_{ 1}(u)\right\vert + \sup_{ s\in [0, T]} \left\vert \lambda_{2, s} - \lambda_{1, s}\right\vert c_{ 1} \left\Vert h \right\Vert_{ 1}
\end{align*} 
which gives immediately \eqref{eq:global_Lip_Phis}.

\medskip

Now turn to the proof of the second item of Proposition~\ref{prop:comparison}. We first prove the result under a stronger condition: suppose for the moment that \eqref{eq:comp_Phi12} is replaced by 
\begin{equation}
\label{eq:Phi12st}
\Phi_{ 1}(x) < \Phi_{ 2}(x), x\in \mathbb{ R}.
\end{equation}
 Define 
\begin{equation*}
t_{ \ast}= \inf \left\lbrace t\geq0,\ \lambda_{2, t} < \lambda_{ 1,t}\right\rbrace.
\end{equation*}
The point is to prove that $t_{ \ast}= \infty$.   We proceed by contradiction and suppose that $t_{ \ast}<\infty$. First note that for $i=1,2$, $ \lambda_{i, 0} =\Phi_{ i} \left(\xi_{ 0}\right)$ so that $ \lambda_{1, 0}< \lambda_{2, 0}$ implying that $t_{*}>0.$ Then by Proposition \ref{prop:WP_solC1}  giving continuity of the solutions, we have $ \lambda_{2, t_{ \ast}}= \lambda_{1, t_{ \ast}}$. Write again \eqref{eq:lambdas12} for $t=t_{ \ast}$: we have
\begin{align*}
0=\lambda_{2, t_{ \ast}} - \lambda_{1, t_{ \ast}}&= \Phi_{ 2} \left( \xi_{ t_{ \ast}} + \int_{ 0}^{t_{ \ast}} h(t_{ \ast}-s) \lambda_{2, s} {\rm d}s\right)-\Phi_{1} \left( \xi_{ t_{ \ast}} + \int_{ 0}^{t_{ \ast}} h(t_{ \ast}-s) \lambda_{2, s} {\rm d}s\right) \nonumber\\
&+ \Phi_{ 1} \left( \xi_{ t_{ \ast}} + \int_{ 0}^{t_{ \ast}} h(t_{ \ast}-s) \lambda_{2, s} {\rm d}s\right)-\Phi_{1} \left( \xi_{ t_{ \ast}} + \int_{ 0}^{t_{ \ast}} h(t_{ \ast}-s) \lambda_{1, s} {\rm d}s\right).
\end{align*}
By assumption \eqref{eq:Phi12st}, the term on the first line of the righthand-side of the previous equality is strictly positive. Concerning the second term, we have that $ \int_{ 0}^{t_{ \ast}} h(t_{ \ast}-s) \lambda_{1, s} {\rm d}s \leq  \int_{ 0}^{t_{ \ast}} h(t_{ \ast}-s) \lambda_{2, s} {\rm d}s$ as, by definition of $t_{ \ast}$, $ \lambda_{2, s}\geq \lambda_{1,s}$ for $s\in [0, t_{ \ast}]$ and $h\geq0$. Therefore, since $ \Phi_{ 1}$ is nondecreasing, $\Phi_{ 1} \left( \xi_{ t_{ \ast}} + \int_{ 0}^{t_{ \ast}} h(t_{ \ast}-s) \lambda_{2, s} {\rm d}s\right)-\Phi_{1} \left( \xi_{ t_{ \ast}} + \int_{ 0}^{t_{ \ast}} h(t_{ \ast}-s) \lambda_{1, s} {\rm d}s\right)\geq 0$. Therefore the sum of the two previous terms cannot be $0$. So $ t_{ \ast}=+\infty$ and \eqref{eq:comparison_lambda12} is true. Now prove \eqref{eq:comparison_lambda12} under the weaker condition \eqref{eq:comp_Phi12}. For $ \Phi_{ 1}$ and $ \Phi_{ 2}$ verifying \eqref{eq:comp_Phi12}, define for any $ \varepsilon>0$, $\Phi_{ 2}^{ \varepsilon}(x)= \Phi_{ 2}(x)+ \varepsilon$ and denote by $ \lambda_{ 2}^{ \varepsilon}$ the solution driven by $ \Phi_{ 2}^{ \varepsilon}$. Then for all $ x\in \mathbb{ R}$, $ \Phi_{ 1}(x) \leq \Phi_{ 2}(x) < \Phi_{ 2}^{ \varepsilon}(x)$ so that one can apply the previous intermediate result, that is for all $t\geq0$, all $ \varepsilon>0$
\begin{equation}
\label{aux:comp_lambda12eps}
\lambda_{ t}^{ 1}\leq \lambda_{ 2, t}^{ \varepsilon}.
\end{equation}
But now applying the first item of Proposition~\ref{prop:comparison}, we have for fixed $t\geq0$, the existence of some $C_{ t}>0$ such that $ \left\vert \lambda_{ 2,t}^{ \varepsilon} - \lambda_{ 2,t}\right\vert \leq C_{ t} \varepsilon$, so that passing to the limit as $ \varepsilon\to 0$ ($t$ fixed) into \eqref{aux:comp_lambda12eps} gives the desired result \eqref{eq:comparison_lambda12}. This concludes the proof of Proposition~\ref{prop:comparison}.
\end{proof}

\subsection*{Acknowledgments.} Both authors acknowledge the support of project  HAPPY ANR-23-CE40-0007 of the French National Research Agency.

\end{document}